\journal{arXiv.org}
\newcommand{\mylabel}[2]{#2\def\@currentlabel{#2}\label{#1}}
\pgfplotsset{compat=1.15}
\definecolor{zzttqq}{rgb}{0.6,0.2,0}
\definecolor{qqqqcc}{rgb}{0,0,0.8}
\definecolor{ffqqqq}{rgb}{1,0,0}
\definecolor{cqcqcq}{rgb}{0.7529411764705882,0.7529411764705882,0.7529411764705882}
\DeclarePairedDelimiter{\floor}{\lfloor}{\rfloor}
\theoremstyle{plain}
\newtheorem{theorem}{Theorem}[section]
\newtheorem{corollary}[theorem]{Corollary}
\newtheorem{lemma}[theorem]{Lemma}
\newtheorem{proposition}[theorem]{Proposition}
\newtheorem{condition}[theorem]{Condition}
\theoremstyle{definition}
\newtheorem{definition}[theorem]{Definition}
\newtheorem{example}[theorem]{Example}
\theoremstyle{remark}
\numberwithin{equation}{section}
\newcommand{\mA}{\mathbb{A}}
\newcommand{\mF}{\mathbb{F}}
\newcommand{\mM}{\mathbb{M}}
\newcommand{\N}{\mathbb{N}}
\newcommand{\R}{\mathbb{R}}
\newcommand{\mX}{\mathbb{X}}
\newcommand{\mY}{\mathbb{Y}}
\newcommand{\mZ}{\mathbb{Z}}
\newcommand{\Z}{\mathbb{Z}}
\newcommand{\p}{\mathbb{P}}
\newcommand{\cA}{\mathcal{A}}
\newcommand{\cB}{\mathcal{B}}
\newcommand{\cC}{\mathcal{C}}
\newcommand{\cE}{\mathcal{E}}
\newcommand{\cF}{\mathcal{F}}
\newcommand{\cI}{\mathcal{I}}
\newcommand{\cK}{\mathcal{K}}
\newcommand{\cL}{\mathcal{L}}
\newcommand{\cP}{\mathcal{P}}
\newcommand{\cR}{\mathcal{R}}
\newcommand{\fD}{\mathfrak{D}}
\newcommand{\fS}{\mathfrak{S}}
\newcommand{\fT}{\mathfrak{T}}
\newcommand{\fU}{\mathfrak{U}}
\newcommand{\sN}{\mathscr{N}}
\newcommand{\E}[1]{\mathbb{E}\left [  #1 \right ]}
\renewcommand{\epsilon}{\varepsilon}
\renewcommand{\phi}{\varphi}
\newcommand{\pspace}{(\Omega,\cA,\p)}
\newcommand{\intd}[1]{\mathrm{d}#1}
\newcommand{\norm}[1]{\left\lVert #1 \right\rVert}
\newcommand{\1}[1]{\,\mathds{1}\! \left\{ #1 \right\} }
\DeclareMathOperator*{\esssup}{ess\,sup}
\DeclareMathOperator*{\essinf}{ess\,inf}
\newcommand{\diam}{\operatorname{diam}}
\newcommand{\wt}{\widetilde}
\newcommand{\ol}{\overline}
\begin{document}

\begin{frontmatter}

\title{
On limit theorems for persistent Betti numbers from dependent data
}

\author[1]{Johannes Krebs}
\ead{krebs@uni-heidelberg.de}
\affiliation[1]{
			organization={Institute for Applied Mathematics, Heidelberg University},
            addressline={Im Neuenheimer Feld 205}, 
            city={Heidelberg},
            postcode={69120}, 
            country={Germany}
            }

\begin{abstract}
We study persistent Betti numbers and persistence diagrams obtained from time series and random fields. It is well known that the persistent Betti function is an efficient descriptor of the topology of a point cloud. So far, convergence results for the $(r,s)$-persistent Betti number of the $q$th homology group, $\beta^{r,s}_q$, were mainly considered for finite-dimensional point cloud data obtained from i.i.d.\ observations or stationary point processes such as a Poisson process. In this article, we extend these considerations. We derive limit theorems for the pointwise convergence of persistent Betti numbers $\beta^{r,s}_q$ in the critical regime under quite general dependence settings.
\end{abstract}
\begin{keyword}
Critical regime \sep Dependent data \sep Functional data \sep Limit theorems \sep Markov Chains \sep Marton Coupling \sep Persistent Betti numbers \sep Persistence diagrams \sep Point processes \sep Time series \sep Topological data analysis \sep Random fields \sep Random geometric complexes.

\MSC[2010] Primary: 60D05 \sep 60G55 \sep  Secondary: 60F10 \sep 37M10 \sep 60G60.

\end{keyword}

\end{frontmatter}

\vspace{3em}
Topological data analysis (TDA) is a comparably young field in (applied) mathematics at the intersection between computational geometry, probability theory, mathematical statistics and machine learning. 
Seminal papers which popularized the ideas of TDA are  \cite{edelsbrunner2000topological}, \cite{zomorodian2005computing}, \cite{carlsson2009topology}. An introduction offers the monograph of \cite{boissonnat2018geometric}. Statistical aspects of TDA are discussed in the surveys of \cite{chazal2017introduction} and \cite{bobrowski2018topology}.

 In this article, we will focus on a special topic in persistent homology, which itself is the major branch in TDA: We study the large sample behavior of persistent Betti numbers and the corresponding persistence diagram obtained from time series or random fields.

So far, the literature has focused on point cloud data obtained from two major sources. On the one hand, there are various limit theorems for persistent Betti numbers obtained from stationary point processes as a rather general class, a prominent example here is the homogeneous Poisson process. On the other hand, the binomial process (a sample of i.i.d.\ data) is intensely studied, too.

In early contributions, \cite{kahle2011random} investigates the asymptotic behavior of Betti numbers in the sub-, supercritical and critical regime. Extensions are given by \cite{kahle2013limit} and \cite{yogeshwaran2015topology}. From the above mentioned three asymptotic regimes, the critical (or thermodynamic) regime certainly gets the most attention and in the following, we will also limit the discussion in the introduction to this case.

One of the first major contributions which studies large deviation inequalities and central limit theorems for the Poisson and binomial sampling scheme in the critical regime is the work of \cite{yogeshwaran2017random}. Extensions to persistent Betti numbers and persistence diagrams are given in \cite{hiraoka2018limit}. \cite{trinh2018central} provides an abstract result for the asymptotic normality of Betti numbers. \cite{krebs2018asymptotic} study the stabilizing properties of and related central limit theorems from Betti numbers built from non homogeneous Poisson or binomial processes. Strong laws of large numbers for Betti numbers obtained from the Poisson or the binomial process on general manifolds are considered in \cite{goel2018strong}. Other recent contributions which also discuss limiting results for Betti numbers are \cite{owada2018limit}, \cite{owada2019limit}. \cite{divol2018persistence} study the limiting behavior of the persistence diagram.

In the context of time series, the behavior of Betti numbers has been mainly investigated in applications. \cite{islambekov2020harnessing} combine the TDA methodology with classical methods for change point detection. Classification problems for time series using methods from TDA are considered in \cite{seversky2016time} and in \cite{umeda2017time}. The applications of TDA to networks obtained from financial data are studied in \cite{gidea2017topology} and \cite{gidea2018topological}; here the methods of TDA measure a type of high-dimensional and time-dependent correlation in the network.

The persistence landscape (\cite{bubenik2015statistical}) is an efficient summary statistic of the persistence diagram and is quite popular in machine learning; we also refer to \cite{chazal2014stochastic} and \cite{kim2020efficient} for related contributions.

The aim of this paper, is to provide two advances in the study of persistent Betti numbers in the context of time series and random fields. On the one hand, we study the large sample behavior of the expectation of persistent Betti numbers obtained from time series and random fields. More precisely, for the time series case, let $X=(X_t: t\in\Z)\subseteq [0,1]^p$ be a stationary Markov chain of order $m$ (w.r.t.\ its natural filtration) with a continuous and strictly positive joint density $g$ of $(X_1,\ldots,X_{m+1})$. Write $\kappa$ for the marginal density of each $X_t$. It is well-known that for an $n$-binomial process $\mX^*_n$, which consists of $n$ i.i.d.\ observations $X^*_t$ with marginal density $\kappa$, the limit of $n^{-1} \E{ \beta^{r,s}_q(\cK(n^{1/p} \mX^*_n)) }$ exists. Using the nearly additive properties of persistent Betti numbers, we show that Markov chains converge to the same limit. In fact, denoting $\cK$ the \v Cech or Vietoris-Rips filtration, we have
\[
			\lim_{n\rightarrow \infty} n^{-1} \E{ \beta^{r,s}_q( \cK( n^{1/p} \mX_n) ) } = \E{ \hat{b}_q(\kappa(X_t)^{1/p}(r,s)) }, \quad \forall\, 0\le q\le p-1, \quad \forall\, 0\le r\le s<\infty,
\]
and where $\hat{b}_q(r,s)$ is the limit of $n^{-1} \E{ \beta^{r,s}_q(\cK(n^{1/p} \mY^*_n)) }$ for an $n$-binomial process $\mY^*_n$ on $[0,1]^p$ with uniform density $\kappa$.  We also prove a related strong law of large numbers. Doing so, we can also conclude convergence results for persistence diagrams. Moreover, we establish similar convergence results for stationary random fields.

On the other hand, we establish an exponential inequality and give strong laws of large numbers for persistent Betti numbers, which are not exclusively derived from point clouds on $\R^p$. Instead, we also allow for functional data as a potential data source. The presented exponential inequality relies on the concept of the Marton coupling, see \cite{marton2003measure}. Marton couplings have also been successfully used in the past to derive concentration inequalities of the McDiarmid-type, see also \cite{samson2000concentration} and \cite{paulin2015concentration}.

The remainder of this paper is organized as follows. In Section~\ref{Sec_Notation}, we give the notation used throughout the manuscript. Furthermore, we outline the basic concept of persistent homology. In Section~\ref{Sec_MainResults}, we describe the dependence structure assumed for our time series model and present our main results related to the time series case. In Section~\ref{Sec_ExtensionsToRandomFields}, we study the extension of our results to random fields. The proofs are contained in Section~\ref{Sec_TechnicalResults}; further deferred calculations are contained in \ref{Appendix1}.

\section{Notation}\label{Sec_Notation}
The purpose of this section is not to make the paper self-contained which is impossible. The aim is rather to allow the reader from other areas to become familiar with the vocabulary and to understand the basic concepts of topological data analysis. 

We begin with some general notation. We write $\N$ for the natural numbers starting at 1; if we include 0, we write $\N_0$. We write $\# A$ for the cardinality of a countable set $A$. We work on a separable Banach space $S$. We write $d$ for the metric which is obtained from the norm on $S$ and $\fS$ for the Borel-$\sigma$-field on $S$. $(S,\fS)$ is equipped with the measure $\mu$. The measure $\mu$ is non-atomic and $\sigma$-finite. Then we write $B(x,r) = \{y\in S: d(x,y)\le r\}$ for the closed $d$-ball around $x$. The diameter of a set $A\subseteq S$ is $\diam(A) = \sup\{ d(x,y): x,y\in S\}$. Let $A\in \cB(\R^p)$ and write $|A|$ for its $p$-dimensional Lebesgue measure as well as $A^{(\epsilon)} = \{x\in S: d(x,A)\le \epsilon\}$ for its $\epsilon$-offset.

Write $\otimes_{i=1}^\ell \mu = \mu^{\otimes \ell}$ for the $\ell$-fold product measure on the product space $(S^{ \ell},\fS^{\otimes \ell})$. The essential supremum of a real-valued function $f$ defined on $(S,\fS,\mu)$ is abbreviated by $\|f\|_{\infty,\mu}$. We write simply $\|f\|_\infty$ for the supremum norm of a continuous function on $\R^p$.

Let $\pspace$ be a probability space and let $(T,\fT), (U,\fU)$ be two state spaces. Consider two random variables $X\colon\Omega\to T$ and $Y\colon \Omega\to U$. Assume that $X$ admits a conditional distribution given $Y$. We write $\mM_{X|Y}\colon U\times \fT \to [0,1]$ for this distribution.

In order to abbreviate a subset of an ordered sample $(x_1,\ldots,x_n)$, say, we write $x_a^b$ for the subset $(x_a, x_{a+1}, \ldots, x_b)$, $1\le a\le b\le n$. Given a time series $X_1^n = (X_1,\ldots,X_n) \subseteq S$, we write $\mX_n = \{ X_1,\ldots, X_n\}$ for the associated point cloud which has no ordering.

Given a metric space $(E,d)$ and Radon measures $\nu,\nu_1,\nu_2,\ldots$, we say that $(\nu_n)_{n\in\N}$ converges vaguely to $\nu$ if
\[
			\int_E f\intd{\nu_n} \to \int_E f\intd{\nu}, \quad \forall\, f\in C_c(E),
\]
where $C_c(E)$ is the class of all continuous functions on $E$ with compact support. We indicate this writing $\nu_n \overset{v}{\to} \nu$.

We construct the filtration from the {\v C}ech or the Vietoris-Rips complex. If $\mX$ is a finite subset of $S$ and $r\ge 0$, these complexes are defined by
\begin{align*}
				\cC(\mX,r) &= \big\{ \sigma \subseteq \mX \ \big| \bigcap_{x\in \sigma} B(x,r)\neq \emptyset \big\} \text{ and }\\
				\cR(\mX,r) &= \{  \sigma \subseteq \mX \ \big | \diam(\sigma)\le r \}.
\end{align*}
In the following, the writing $K$ refers to both the {\v C}ech and the Vietoris-Rips complex. If we want additionally to precise the point cloud or the filtration parameter $r$, we write $K(\mX,r)$ or $K(r)$. The corresponding filtration is given by $\cK=\cK(\mX) = ( K(\mX,r): 0\le r< \infty )$. It is a direct consequence of the homogeneity of $d$ that for $\eta>0$ the complexes $K( \eta \mX,r)$ and $K(\mX,\eta^{-1} r)$ are combinatorially isomorphic.

 The dimension of a simplex $\sigma\in K$ is its cardinality minus 1. If $\sigma$ has dimension $q$, it is a $q$-simplex. Write $K_q$ for the set of $q$-simplices in a complex $K$. Moreover, for a measurable set $A\in \fS$ and a point cloud $\mX\subseteq S$, we write $K_q(\mX,r;A)$ for the number if $q$-simplices in $K(\mX,r)$ with at least one vertex in $A$.

We use the field $\mF_2$ to build the homology groups and the Betti numbers of a simplicial complex $K$.
Define for $q\in\N_0$ the space of $q$-chains $C_q(K)$ to be the free Abelian group generated by the $q$-simplices in $K$. So the elements of $C_q(K)$ are formal sums (``$q$-chains'') $c=\sum_{i} a_i \sigma_i$, $a_i\in\{0,1\}$, $\sigma_i\in K$ a $q$-simplex. The sum of two $q$-chains $c_1+c_2$ is their symmetric difference because the coefficients $a_i$ are in $\mF_2$.

The boundary operator $\partial_q$ relates $C_q(K)$ and $C_{q-1}(K)$ by mapping a $q$-simplex $\{x_0,x_1,\ldots,x_q\}$ to $\partial_q( \{x_0,x_1,\ldots,x_q\} ) \coloneqq \sum_{i=0}^q (-1)^{i+1} \{x_0,\ldots,x_{i-1},x_{i+1},\ldots,x_q\}$. For a general chain $c=\sum_i a_i \sigma_i \in C_q(K)$, the boundary operator is then $\partial_q(c) = \sum_i a_i \partial_q(\sigma_i) $. 

The boundary operator satisfies $\partial_{q} \circ \partial_{q+1} \equiv 0$ (``a boundary has no boundary''). This property enables the construction of homology groups of $K$. Let $Z_q(K) = \operatorname{ker}(\partial_q)$ be the subspace of  $C_q(K)$ consisting of the $q$-cycles, those elements whose boundary is 0 under $\partial_q$. Let $B_q(K)= \operatorname{im}(\partial_{q+1})$ be the subspace of $ C_q(K)$ that consists of the boundaries of elements in $C_{q+1}(K)$ (which lie in $C_q(K)$).

The homology groups are defined as $H_q(K) \coloneqq Z_q(K) / B_q(K)$, the cycles $Z_q$ modulo the boundaries $B_q$ in dimension $q$. Loosely speaking, the elements in $H_q(K)$ represent ``holes'' in the simplicial complex $K$. These are closed loops, voids or cavities, whose interior cannot be filled by other elements of the complex. Similarly as $Z_q(K)$ and $B_q(K)$, $H_q(K)$ is a vector space.

The $q$th Betti number of a simplicial complex $K$ is the dimension of $H_q(K)$, viz.,
\[
	\beta_q(K) = \dim ( Z_q(K)/B_q(K)).
\]
So, $\beta_q(K)$ is the number of $q$-dimensional holes in $K$. $H_q(K)$ and $\beta_q(K)$ provide topological information from a \textit{single} simplicial complex. Given a filtration $\cK = (K(r): 0\le r < \infty)$, the persistent homology provides more topological details. The natural inclusions $Z_q( K(r) ) \subseteq Z_q( K(s))$ and $B_q( K(r)) \subseteq B_q( K(s))$ for $r\le s$, provide the inclusion map
\[
	H_q(K(r)) \hookrightarrow H_q( K(s)),\quad x+B_q(K(r)) \mapsto x+B_q(K(s)).
\]
We define the persistent homology groups of the filtration $\cK = (K(r): 0\le r < \infty)$ by
\[
	H_q^{r,s}(\cK) = Z_q( K(r))/ ( B_q(K(s)) \cap Z_q(K(r)) ), \quad r\le s.
\]
Loosely speaking, nonzero elements in $H_q^{r,s}(\cK)$ represent topological features born before or at time $r$ and which persist until a time greater than $s$. The dimension of $H_q^{r,s}(\cK)$, i.e., the number of these features, is the persistent Betti number.
\begin{definition}[Persistent Betti number]\label{D:PersBetti}
Let $\cK$ be a filtration and let $0\le r\le s < \infty$. The persistent Betti number of dimension $q\in\N_0$ for the parameter pair $(r,s)$ is
\begin{align*}
	\beta_q^{r,s}(\cK) \coloneqq \dim H_q^{r,s}(\cK) 
	&=\dim ( Z_q( K(r)) ) - \dim ( B_q(K(s)) \cap Z_q(K(r)) ).
\end{align*}
\end{definition}
At this point there is an important difference between the \v Cech and Vietoris-Rips complex in the special case where we consider the Euclidean space $\R^p$. While in the \v Cech complex the homology degree is bounded by $p-1$, the Vietoris-Rips complex can have nontrivial cycles of every possible dimension (see also \cite{bobrowski2018topology}).

The $q$th persistence diagram summarizes the evolution of the homology groups; it is a multiset of points in $\Delta = \{ (b,d): 0\le b<d\le \infty \}$. Each point $(b,d)$ in the $q$th persistence diagram corresponds to a $q$-dimensional hole (feature) in the filtration $\cK$ which is born (appears for the first time) at time $b$ and dies (disappears in the filtration) at time $d$. The lifetime of this feature $d-b$ is called the persistence. $d=\infty$ means that the feature has an infinite lifetime.
Persistence diagrams exist given mild assumptions on the filtration, see \cite{chazal2016structure}. Also in the case of a random point cloud, e.g., an i.i.d.\ sample, the persistence diagram can inherit certain smoothness properties from the point cloud, see \cite{chazal2018density}.

Let $ \fD_q(\cK) = \{ (b_i,d_i)\in \Delta: i=1,\ldots, n_q \}$ be the $q$th persistence diagram given as a multiset of points. Then in the following we understand $\fD_q(\cK)$ as a counting measure on $\Delta$ defined as
\begin{align*}
		\xi_q(\cK) = \sum_{(b_i,d_i)\in \fD_q(\cK)} \delta_{(b_i,d_i)}.
\end{align*}
$\xi_q(\cK)$ is related to the $q$th persistent Betti number as follows
\begin{align*}
	&\xi_q(\cK)( [0,r]\times (s,\infty] ) =\beta^{r,s}_q( \cK ) .
\end{align*}

This means $\beta^{r,s}_q$ counts the number of $q$-dimensional features in the upper left rectangular area with vertex $(r,s)$ in the persistence diagram. So given $r << s$, the persistent Betti number $\beta^{r,s}_q$ represents the number of $q$-dimensional features with a high persistence.
It is clear that the values of the $q$th persistence diagram $\xi_q(\cK)$ also describe the persistent Betti function $\{\beta^{r,s}_q( \cK): 0\le r\le s < \infty\}$ completely.

\section{Persistent Betti numbers obtained from time series}\label{Sec_MainResults}
This section contains the main results of this paper. We derive an exponential inequality for persistent Betti numbers from a rather general class of stochastic processes, which also applies to functional data and random fields after a renumeration of the coordinates, we will see this below. For the special case of an $\R^p$-valued time series, we also give the large sample behavior of the expectation and study the vague convergence of the corresponding persistence diagram.

\subsection{The data generating process}
Consider a stationary process $X=( X_t: t\in \Z 	)$ defined on $\pspace$ and taking values in $S$. (A special case would be $\R^p$ equipped with the Borel-$\sigma$-field $\cB(\R^p)$ and the Lebesgue measure.)

The observations $X_t$ admit a density $\kappa$ w.r.t.\ $\mu$. Furthermore, the observations admit conditional densities as follows. The distribution of $X_t$ conditional on $X_1,\ldots,X_{t-1}$, $\cL(X_t | X_1,\ldots,X_{t-1})$, admits a density $f_{X_t \,|\, X_1,\ldots,X_{t-1} }$ for each $t\in\N$. Also $\cL( X_{v_1},\ldots, X_{v_\ell}  \,|\, X_t )$ admits a density $f_{X_{v_1},\ldots, X_{v_\ell}  \,|\, X_t}$ for all $t,\ell\in\N$ and all finite sets $\{v_1,\ldots,v_\ell\}\subseteq \N$, which do not contain $t$. Moreover, there is a $ f^*<\infty$ such that uniformly
\begin{align}
				 \kappa, f_{X_t \,|\, X_1,\ldots,X_{t-1} } \le f^* \text{ and } f_{ X_{v_1},\ldots, X_{v_\ell}  \,|\, X_t } \le f^*  \label{C:RegularityDensity} \tag{A1}
\end{align}
for all $t,\ell\in\N$ and sets $\{v_1,\ldots,v_\ell\}\subseteq \N$ which do contain $t$. These requirements are not restrictive and satisfied for a wide range of stochastic processes.

\subsection{Marton couplings as the concept of dependence}
We use the concept of Marton couplings to quantify the dependence within the observed data. These couplings were first defined in \cite{marton2003measure} and measure the strength of dependence within a collection of random variables by a mixing (or coupling) matrix.
\begin{definition}[Marton coupling]\label{Def:MartonCoupling}
Let $N\in\N$ and let $\Lambda_1,\ldots,\Lambda_N$ be Polish. Let $Z=(Z_1,\ldots,Z_N)$ be a vector of random variables taking values in $\Lambda = \Lambda_1\times\ldots\times\Lambda_N$. A Marton coupling of $Z$ is a set of couplings $(	 Z^{(z_1,\ldots,z_i,z'_i)}, Z'^{(z_1,\ldots,z_i,z'_i)} $), for every $i \in \{1,\ldots,N\}$ and every $z_1\in\Lambda_1,\ldots, z_i, z'_i \in\Lambda_i$, which satisfies the conditions
			\begin{align*}
		(i)	\qquad			& Z_j^{(z_1,\ldots,z_i,z'_i)} = z_j \text{ for all } j \in\{1,\ldots,i\}, \\
						& {Z'}_j^{(z_1,\ldots,z_i,z'_i)} = z_j \text{ for all } j \in \{1,\ldots,i-1\} \text{ and } {Z'}_i^{(z_1,\ldots,z_i,z'_i)} = z'_i. \\
		(ii)  \qquad			&  \big(	Z_{i+1}^{(z_1,\ldots,z_i,z'_i)},\ldots,Z_{N}^{(z_1,\ldots,z_i,z'_i)}	\big) \sim \cL\big(	Z_{i+1}.\ldots,Z_N \,|\, Z_1=z_1,\ldots,Z_{i-1}=z_{i-1},Z_i=z_i	\big) , \\
		&  \big(	{Z'}_{i+1}^{(z_1,\ldots,z_i,z'_i)},\ldots,{Z'}_{N}^{(z_1,\ldots,z_i,z'_i)}	\big) \sim \cL\big(	Z_{i+1}.\ldots,Z_N \,|\, Z_1=z_1,\ldots,Z_{i-1}=z_{i-1},Z_i=z'_i	\big). \\
		(iii)	\qquad			& \text{If } z_i=z'_i, \text{ then }  Z^{(z_1,\ldots,z_i,z'_i)}=  {Z'}^{(z_1,\ldots,z_i,z'_i)}.
			\end{align*}
Write $\mathbbm{M}_{Z_i|(Z_1,\ldots,Z_{i-1})} $ for the conditional distribution of $Z_i$ given $(Z_1,\ldots,Z_{i-1})$ for $1\le i\le N$. Construct a measure $\mu_i$ on the product space $\Lambda_1\times\ldots\times \Lambda_{i-1}\times \Lambda_i\times\Lambda_i$, which consists of the joint distribution of $(Z_1,\ldots,Z_{i-1})$ and the product measure $\mathbbm{M}_{Z_i|(Z_1,\ldots,Z_{i-1})} \otimes \mathbbm{M}_{Z_i|(Z_1,\ldots,Z_{i-1})} $ for a Borel set of $\Lambda_1\times \ldots \Lambda_{i-1} \times \Lambda_i \times \Lambda_i$ as follows:
\begin{align}\begin{split}\label{Def:MuIMeasure}
		\mu_i(A ) &= \int_{ \Lambda_1\times\ldots\times \Lambda_{i-1} }  \p_{(Z_1,\ldots,Z_{i-1}) }(\intd{(z_1,\ldots,z_{i-1})} ) \int_{\Lambda_i } \mathbbm{M}_{Z_i|(Z_1,\ldots,Z_{i-1})} \left( (z_1,\ldots,z_{i-1}), \intd{z_i} \right)\\
		&\qquad\qquad  \int_{\Lambda_i}\mathbbm{M}_{Z_i|(Z_1,\ldots,Z_{i-1})} \left( (z_1,\ldots,z_{i-1}), \intd{z'_i} \right) \1{A} (z_1,\ldots,z_{i-1},z_i,z'_i).
\end{split}\end{align}
Then, we define the mixing matrix $\Gamma \coloneqq (\Gamma_{i,j})_{1\le i,j \le N}$ for a Marton coupling of $Z$ as an upper diagonal matrix with $\Gamma_{i,i} =1$ and
$$
			\Gamma_{j,i}=0, \quad \Gamma_{i,j} = \esssup_{ } \p\left( Z_j^{(z_1,\ldots,z_{i-1}, z_i,z'_i)} \neq {Z'}_j^{(z_1,\ldots,z_{i-1}, z_i,z'_i)} \right), \quad 1\le i < j \le N,
$$
where we compute the essential supremum w.r.t.\ the measure $\mu_i$. Note that for $1\le i<j\le N$ each entry in the mixing matrix is bounded above by
\[
	\Gamma_{i,j} \le \sup_{}  \p\left( Z_j^{(z_1,\ldots,z_{i-1}, z_i,z'_i)} \neq {Z'}_j^{(z_1,\ldots,z_{i-1}, z_i,z'_i)} \right)
	\]
	where the supremum is taken over all $(z_1,\ldots,z_{i-1}, z_i,z'_i)\in\Lambda_1\times\ldots\times\Lambda_i\times\Lambda_i$.
\end{definition}

We return to the data generating process $X$. Write $\Gamma^{(n)}$ for the mixing matrix of the sample $X_1,\ldots,X_n$. As $X$ is stationary, $\Gamma^{(n)}_{i,j} = \Gamma^{(n)}_{i+k,j+k}$ (as long as all indices are between 1 and $n$). Consequently, $\Gamma^{(n)}_{i,j} = \Gamma^{(n)}_{n-j+1,n-i+1}$ for the choice $k = n-j-i+1$. So the summation over all elements in line $i$ is equivalent to the summation over all elements in column $n-i+1$ (and vice versa), viz.,
\begin{align*}
				\sum_{j=1}^n \Gamma^{(n)}_{i,j}  = \sum_{j=i}^n \Gamma^{(n)}_{i,j}  = \sum_{j=1}^{n-i+1} \Gamma^{(n)}_{j,n-i+1} = \sum_{j=1}^{n} \Gamma^{(n)}_{j,n-i+1}.
\end{align*}
In particular, the maximum absolute column sum $\| \Gamma^{(n)} \|_1 $ equals the maximum absolute row sum $\|\Gamma^{(n)}\|_\infty$. In what follows, we assume that the coefficients of the mixing matrix are at least summable in the sense that
\begin{align}
			\gamma_\infty \coloneqq \sup_{n\in\N} \| \Gamma^{(n)} \|_\infty  < \infty. \label{C:MixingMatrix} \tag{A2}
\end{align}
Consider the spectral norm $\|\Gamma^{(n)}\|$ of the mixing matrix $X$ induced by the Euclidean norm $\|\cdot\|$ on $\R^N$. Using $\| \Gamma^{(n)} \|^2 \le \| \Gamma^{(n)} \|_1 \| \Gamma^{(n)} \|_\infty$, implies that $\Gamma^{(n)}$ is also uniformly bounded in the spectral norm over all $n\in\N$, i.e., $\sup_{n\in\N} \| \Gamma^{(n)} \|< \infty$.

The condition on the mixing matrix in \eqref{C:MixingMatrix} is satisfied for a wide range of stochastic processes. Consider for instance, so-called delay embeddings for time series.

\begin{example}[Delay embeddings from Markov chains]\label{MarkovChainDelayEmbedding}
Let $Z$ be a stationary, uniformly geometrically ergodic Markov chain in a Polish space $\cE$ whose marginal distribution and transition kernel both admit a strictly positive density w.r.t.\ a reference measure $\mu$. Construct a process $X$ from $Z$ via a delay embedding, that is, $X_t = (Z_{t}, Z_{t-\tau_1},\ldots, Z_{t-\tau_{m-1}})\in\cE^m$, where $\tau_1< \ldots < \tau_{m-1}$ are natural numbers. We show that this process $X$ satisfies \eqref{C:MixingMatrix}. We construct a Marton coupling $(X^{(x_1,\ldots,x_i,x'_i)}, X'^{(x_1,\ldots,x_i,x'_i)}	)$, for every $i \in \{ 1,\ldots,n \}$ and every $x_1,\ldots,x_{i-1}, x_i, x'_i \in \cE^m$ with Goldstein's maximal coupling (Proposition~\ref{Prop:Goldstein}). 

For every $i$ and all states, Goldstein's maximal coupling yields two coupled random variables $ X^{(x_1,\ldots,x_i,x'_i)}$, $X'^{(x_1,\ldots,x_i,x'_i)}$ such that (i), (ii) and (iii) from Definition~\ref{Def:MartonCoupling} are satisfied. By Proposition~\ref{Prop:Goldstein}, the marginals of each coupling satisfy
\begin{align}
			&\p\left(	X_j^{(x_1,\ldots,x_i,x'_i)} \neq X_j'^{(x_1,\ldots,x_i,x'_i)}		\right) \nonumber \\
			&\le d_{TV}\left(	\cL((X^{(x_1,\ldots,x_i,x'_i)})_j^n ), \cL((X'^{(x_1,\ldots,x_i,x'_i)})_j^n )		\right)  \nonumber \\
			&= d_{TV} ( \cL(X_j^n|X_1^{i-1}=x_1^{i-1},X_i=x_i), \cL(X_j^n|X_1^{i-1}=x_1^{i-1},X_i=x'_i) ).\label{E:DelayEmbedding1}
\end{align} 
Note that the essential supremum of the left-hand side w.r.t.\ $\mu_i$ equals the coefficient $\Gamma^{(n)}_{i,j}$. Thus, we can easily bound above the norm of the mixing matrix $\Gamma^{(n)}$ with the properties of the Markov chain $Z$. For simplicity, we use $\Gamma^{(n)}_{i,j} \le 1$ for $0\le j-i\le \tau_{m-1}$ and only consider the asymptotic properties for $j-i>\tau_{m-1}$. Set $x_i=(z_i,\ldots,z_{i-\tau_{m-1}})'$. We can derive from the Markov property of $Z$ that
\begin{align*}
			\p\left( X_j^n \in A | X_1^i = x_1^i \right) &= 
			\p\left\{
			\left(	
			\begin{pmatrix}
			Z_j\\ Z_{j-\tau_1}\\ \vdots \\ Z_{j-\tau_{m-1} }
			\end{pmatrix}
			,\ldots,
			\begin{pmatrix}
			Z_n\\ Z_{n-\tau_1}\\ \vdots \\ Z_{n-\tau_{m-1} }
			\end{pmatrix}
			\right) \in A
			\Biggl|
			Z_i=z_i \right\}.
\end{align*}
Next, we use the Markov property to see that the total variation distance in \eqref{E:DelayEmbedding1} is determined by the observation $X_j$ because $j$ is closest to $i$. Consequently, if $j-\tau_{m-1}-i\ge 0$, \eqref{E:DelayEmbedding1} equals 
\begin{align}\label{E:DelayEmbedding2}
		d_{TV}\left( \cL( Z_{j-\tau_{m-1}} |Z_i=z_i), \cL(Z_{j-\tau_{m-1}} |Z_i =z'_i 	) \right).
\end{align}
By assumption, $Z$ is uniformly geometrically ergodic. Hence, there are $R\ge 1$ and $\rho\in (0,1)$ such that uniformly in $z$ and for all $t\in\N$
\[
	d_{TV}\Big( \cL( Z_{t} |Z_0=z), \cL(Z_{t}	) \Big) \le R \rho^t.
\]
So the quantity in \eqref{E:DelayEmbedding2} is at most $1\wedge (2R \rho^{j-\tau_{m-1}-i})$. In particular, we have for a row of the mixing matrix of $X_1,\ldots,X_n$ the following bound, which implies \eqref{C:MixingMatrix}:
\[
		\Gamma^{(n)}_{i,\cdot} \le (1,\ldots,1,1, 1 \wedge (2R \rho,1 \wedge (2R \rho^2),\ldots,1\wedge(2R \rho^{n-1-\tau_{m-1}} ) ).
	\]
Consequently, $\gamma_\infty \le (\tau_{m-1}+1 - 2R ) + 2R / (1- \rho)$.

 The mixing time of a (uniformly ergodic) Markov chain is defined by
\[
	t_{mix} = \min\Big( t: d_{TV}( \cL( Z_{t} |Z_0=z), \cL(Z_{t} )  ) \le \frac{1}{4}	\Big).
\]
Hence, using the Markov property, we can also give an upper bound on \eqref{E:DelayEmbedding2} in terms of the mixing time by simply writing $j-\tau_{m-1}-i = k t_{mix} + r$ for $k\in\N_0$ and $r\in \{0,\ldots,t_{mix}-1\}$. Then \eqref{E:DelayEmbedding2} is at most 1 if $k=0$ or if $j-\tau_{m-1}-i < 0$ and $(2 \frac{1}{4})^k$ if $k\ge 1$. Consequently, one obtains the following upper bound for $\gamma_\infty$
\[
	\gamma_{\infty} \le \tau_{m-1}  + t_{mix} \cdot 1 + t_{mix} \cdot \frac{1}{2} + t_{mix} \cdot \frac{1}{4} + \ldots =\tau_{m-1} + 2 t_{mix}.
\]
\end{example}

\subsection{Covering the state space}
As we consider general state spaces $S$, we work with the following covering condition, which is satisfied in many examples.

\begin{condition}[Covering condition]\label{C:CoveringCondition}
The state space $(S,\fS)$ is precompact. Write $\sN=\sN(r, S, d)$ for the $r$-covering number of $S$ w.r.t.\ $d$, i.e., for each $r>0$, $S$ admits a covering $\{ B(w_j,r): 1\le j\le \sN \}$ with closed balls w.r.t.\ the metric $d$ of radius $r$ located at positions $w_j$.

Moreover for all $r\in\R_+$ there is a sequence of scaling factors $(\eta_n:n\in\N)\subseteq\R_+$, $\eta_n\to\infty$, such that
\begin{align}
		&  n  \sup_{w\in S} \mu( B(w,2\eta_n^{-1} r ) )  < C_r (\log n)^{\alpha_1}, \label{C:Covering} \tag{A3} \\
		&\log \sN(\eta_n^{-1} r, S, d) \le C_r (\log n)^{\alpha_2} \label{C:Covering2} \tag{A4}
\end{align}
for some $C_r,\alpha_1, \alpha_2 \in\R$.
\end{condition}
Some discussion on the covering condition is needed. \eqref{C:Covering2} is clearly needed to bound above the complexity of the underlying metric space $(S,d)$. Condition \eqref{C:Covering} is more delicate as it regulates the ratio between the number of points $n$ and the $\mu$-volume of the $d$-ball. Many spaces satisfy this condition. We give here two examples, the finite-dimensional case, i.e., $\R^p$, and the functional case.

\begin{example}[Coverings for finite-dimensional spaces]
Consider the unit cube $S= [0,1]^p$ which is endowed with the $\infty$-norm and the Lebesgue measure $|\cdot|$. For each $r>0$, $[0,1]^p$ can be covered with disjoint cubes of side length at most $2r$, i.e., balls w.r.t.\ the $\infty$-norm $B(w_j,r) = w_j + [-r,r]^p$. In that case, one finds with geometric arguments that a ball of radius $r$ at some position $w$ can be covered with at most $2^p$ balls of radius $r$ at fixed positions $w_j$. 

In this Euclidean setting, three regimes are classically studied for random geometric complexes. In the subcritical regime $\eta_n^{-1} n^{1/p} \rightarrow 0$, i.e., the scaling factors grow faster than $n^{1/p}$. In the critical regime, this growth is balanced, so that $\eta_n^{-1} n^{1/p} \rightarrow \eta^{-1}\in\R_+$. Moreover, $\eta_n^{-1} n^{1/p} \rightarrow \infty$ in the supercritical regime.

We study the situation for a point cloud of $n$ points $\mX_n \subseteq [0,1]^p$ obtained from a stationary time series $X_1,\ldots,X_n$, whose marginals admit a density $\kappa$ w.r.t.\ the Lebesgue measure. In the subcritical regime, the points of the rescaled point cloud $\eta_n\mX_n$ tend to become more and more isolated as the number of points per volume tends to zero. In the critical regime, the number of points per volume from $\eta_n\mX_n$ tends to a constant. In the supercritical regime, the points from the point cloud $\eta_n \mX_n$ lie increasingly dense.

Clearly, scaling factors which achieve the thermodynamic regime (e.g.\ $\eta_n = n^{1/p}$) satisfy the condition from \eqref{C:Covering} because $n |B(x,\eta_n^{-1} s)| \propto s^p$. The covering number $\sN( \eta_n^{-1} s, [0,1]^p, \|\cdot\|_{\infty} )$ is proportional to $n$ in this case.

Moreover, note that we can also allow for a slower increase in $\eta_n$ which then yields a supercritical regime. For instance, $\eta_n \propto n^{1/p} (\log n)^{-\alpha}$ still satisfies \eqref{C:Covering} (and also \eqref{C:Covering2}) for each $\alpha>0$.

Scaling factors which achieve a subcritical regime satisfy \eqref{C:Covering}, however, \eqref{C:Covering2} restricts the growth rate from above; for instance, any polynomial rate $\eta_n = n^{\alpha}$ is allowed for \eqref{C:Covering2}.
\end{example}

\begin{example}[Coverings of functional spaces] 
Let $\alpha>0$. We study the class of all functions $x$ on the unit interval that posses uniformly bounded derivatives on $(0,1)$ up to order $\underline{\alpha}$ (the greatest integer strictly smaller than $\alpha$) and whose highest derivatives are H{\" o}lder continuous of order $\alpha-\underline{\alpha}$. Write $D^k x$ for the $k$th derivative of a function $x$. Define
\[
		\norm{x}_\alpha = \max_{k\le \underline\alpha} \sup_{t\in[0,1]} |D^k x(t)| +  \sup_{ \substack{s,t\in (0,1),\\ s\neq t } } \frac{ |D^{\underline\alpha} x(t) - D^{\underline\alpha} x(s)|}{|s-t|^{\alpha-\underline\alpha}}.
\]
Write $C^\alpha_M([0,1])$ for the set of continuous real-valued functions on $[0,1]$ with $\norm{x}_\alpha \le M$. We write $\|\cdot\|_{\infty}$ for the supremum-norm of a real-valued function on $[0,1]$ and denote the corresponding $r$-neighborhood of $x$ by $B(x,r)$ (other norms are also possible). Then the covering number of $C^\alpha_1 ([0,1])$ w.r.t.\ $\|\cdot\|_\infty$ satisfies by Theorem~2.7.1 in \cite{vanweak}
\[
		\log \sN(\epsilon,C^\alpha_1 ([0,1]), \norm{\cdot}_\infty) \le c \epsilon^{-1/\alpha},
\]
for a certain constant $c\in\R_+$ which is independent of $\epsilon>0$. 

For each centered Gaussian measure $\mu$ on a real separable Banach space $E$, there is a unique Hilbert space $H_\mu \subseteq E$ such that $\mu$ is determined by considering $(E,H_\mu)$ as an abstract Wiener space (Cameron-Martin space, see \cite{gross1970abstract}). Then $H_\mu$ is dense in $E$ and we have for $x\in H_\mu$
\begin{align}\label{E:SmallBallProb2}
		C^{-1}_x \mu( B(0,s)) \le \mu( B(x,s)) \le \mu(B(0,s)),
\end{align}
where $C_x \ge 1$, see \cite{li2001gaussian} Theorem 3.1. 
In the following, consider the centered Gaussian measure on the infinitely often differentiable functions determined by the covariance function $k(s,t) = \exp(-(s-t)^2 / (2\ell^2))$ for a given characteristic length-scale $\ell>0$. In this case, it follows from \cite{li2001gaussian} Theorem 4.1 and Theorem 4.5 as well as some calculations that
\begin{align}\label{E:SmallBallProb3}
	\exp( - K_1/(\ell s) ) \le \mu( B(0,s) ) \le \exp( - K_2/(\ell s) )
\end{align}
for all $s$ sufficiently small for certain $K_1,K_2\in\R_+$.

For instance, define the scaling factor by $\eta_n = v (\log n)^{\beta}$ for $\beta , v> 0$. Then on the one hand, we obtain $\log \sN(\eta_n^{-1}s) \le c (v/s)^{1/\alpha} (\log n)^{\beta/\alpha}$ which shows \eqref{C:Covering2}. On the other hand, relying on \eqref{E:SmallBallProb3}
\begin{align*}
	C_x^{-1} \exp\Big( \log n - \frac{K_1 v}{2\ell s} (\log n)^\beta \Big) &\le n \mu( B(x,2\eta_n^{-1} s) ) \le \exp\Big( \log n - \frac{K_2 v}{2\ell s} (\log n)^\beta \Big).
\end{align*}
If $\beta>1$, \eqref{C:Covering} is satisfied and we have a functional subcritical regime in that $n \mu( B(x,2\eta_n^{-1}s) )\to 0$ ($n\to\infty$).
\end{example}

\subsection{A concentration inequality for persistent Betti numbers from dependent data}

We come to the first main result in this article which is an abstract exponential inequality for a certain class of functionals defined on the point clouds $\mX_n$, $n\in\N$.
\begin{theorem}\label{T:AbstractExpIneq}
Let the stochastic process $X$ satisfy assumption \eqref{C:RegularityDensity} and \eqref{C:MixingMatrix}. Moreover, let \eqref{C:Covering} and \eqref{C:Covering2} from Condition~\ref{C:CoveringCondition} be satisfied. Let $H$ be a functional defined on all finite subsets of $S$. Set $H_n( \{x_1,\ldots,x_n\} ) = H(\eta_n \{x_1,\ldots,x_n\} )$. The functional satisfies
\begin{itemize}
	\item [(1)] Universal bound. There are $c_1,q\in\R_+$ such that for all $n\in\N$, $1\le i\le n$ and for all $x_1,\ldots,x_n,y\in S$ 
	\begin{align}\label{E:UniversalBound}
		| H_n( \{x_1,\ldots,x_n\} ) - H_n( \{x_1,\ldots,x_{i-1}, y, x_{i+1},\ldots,x_n \} ) | \le c_1 n^q.
	\end{align}
	\item [(2)] Exchange-one cost are local. There are $c_2, r, \wt q\in\R_+$ such that for all $n\in\N$, $1\le i\le n$ and for all $x_1,\ldots,x_n,y\in S$ 
	\begin{align}\begin{split}\label{E:ExchOneCost}
		&| H_n( \{x_1,\ldots,x_n\} ) - H_n( \{x_1,\ldots,x_{i-1}, y, x_{i+1},\ldots,x_n \} ) |  \\
		& \le c_2 |\{y,x_1,\ldots,x_n\} \cap B(x_i,\eta_n^{-1} r) |^{\wt q} + c_2 | \{y,x_1,\ldots,x_n\} \cap B(y,\eta_n^{-1} r) |^{\wt q}.
	\end{split}\end{align}
\end{itemize}
Set $\gamma = (2a-1)/(2\wt q +1)$ and let $a>1/2$. Then for all $n\in\N$ and for all $t>0$
\begin{align*}
	&\p( | H_n(\mX_n) - \E{ H_n(\mX_n)} | > n^a t) \\
	 &\le 2 \exp\left(	-  \frac{n^{\gamma} t^2 }{ 4^{\wt q}2 (16 c_2 \gamma_\infty )^2} \right) + \frac{2 c_1 e n^{2q+1 -\gamma \wt q-a}}{c_2 2^{\wt q}\gamma_\infty} \Big( \frac{1}{n^{q-a}} + \frac{2c_1}{t} \Big) \\
				&\quad  \times \exp\Big\{ - \big[ n^\gamma - \log\sN(\eta_n^{-1}r) - f^*(e-1) \ n \sup_{w\in S} \mu( B(w,2\eta_n^{-1}r)) 		\big]		\Big\}.
\end{align*}
\end{theorem}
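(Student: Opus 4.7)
The strategy is a martingale method combined with a truncation at a controlled local point density. I decompose
\[
H_n(\mX_n) - \E{H_n(\mX_n)} = \sum_{i=1}^n D_i, \qquad D_i = \E{H_n(\mX_n) \mid \cF_i} - \E{H_n(\mX_n) \mid \cF_{i-1}},
\]
along the filtration $\cF_i = \sigma(X_1,\ldots,X_i)$. The universal bound \eqref{E:UniversalBound} only gives the crude estimate $|D_i| \le c_1 n^q$, so the real task is to replace $c_1 n^q$ by a local quantity using the exchange-one cost \eqref{E:ExchOneCost} together with the Marton coupling.

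To bound $|D_i|$, I fix $X_1^{i-1}$ and take two independent draws $X_i, X_i'$ from $\mM_{X_i \mid X_1^{i-1}}$. Using Goldstein's maximal coupling (Proposition~\ref{Prop:Goldstein}) of the two conditional laws of $X_{i+1}^n$ given $X_1^{i-1}$ and each of $X_i$, $X_i'$, I write $D_i$ as the expected difference $H_n(X) - H_n(X')$ of the coupled tails. Telescoping this difference by swapping one coordinate at a time from position $i$ onward and applying \eqref{E:ExchOneCost} to each swap, and using that the marginal probability of a swap at position $j \ge i$ is bounded by $\Gamma^{(n)}_{i,j}$, I arrive at a bound of the form
\[
|D_i| \lesssim c_2 \sum_{j \ge i} \Gamma^{(n)}_{i,j} \, L_j^{\tilde q}, \qquad L_j := |\mX_n \cap B(X_j, \eta_n^{-1} r)|,
\]
where the absolute constants, once summed through $\sum_j \Gamma^{(n)}_{i,j} \le \gamma_\infty$, contribute the factor $16$ inside $(16 c_2 \gamma_\infty)^2$ in the stated bound.

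I then introduce the good event $E_n = \{ \sup_{w \in W_n} |\mX_n \cap B(w, 2\eta_n^{-1} r)| \le M \}$ with $M = 2 n^\gamma$, where $W_n$ is an $\eta_n^{-1} r$-net of $S$ of cardinality $\sN(\eta_n^{-1} r, S, d)$. Any ball $B(X_j, \eta_n^{-1} r)$ lies inside $B(w, 2\eta_n^{-1} r)$ for the nearest net point, so on $E_n$ every local count satisfies $L_j \le M$. Iterating a Chernoff-type estimate using the conditional density bound $f_{X_t \mid X_1^{t-1}} \le f^*$ from \eqref{C:RegularityDensity} yields
\[
\E{\exp(|\mX_n \cap B(w, 2\eta_n^{-1} r)|)} \le \exp\bigl((e-1) f^* \, n \, \mu(B(w, 2\eta_n^{-1} r))\bigr),
\]
and a union bound over $W_n$ produces exactly the second exponential factor of the theorem as an upper bound on $\p(E_n^c)$. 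On $E_n$ the increments $|D_i|$ are uniformly dominated by a constant times $c_2 M^{\tilde q} \gamma_\infty$, and after replacing $H_n$ by a truncation $\tilde H_n$ that coincides with $H_n$ on $E_n$ and retains the exchange-one Lipschitz constant $2 c_2 M^{\tilde q}$ globally, the Marton-coupling Azuma-Hoeffding inequality of \cite{samson2000concentration} and \cite{paulin2015concentration} gives
\[
\p\bigl(|\tilde H_n - \E{\tilde H_n}| > n^a t\bigr) \le 2 \exp\Big( -\frac{(n^a t)^2}{2 n (2 c_2 M^{\tilde q})^2 \gamma_\infty^2} \Big).
\]
With $M = 2 n^\gamma$, the exponent equals a constant multiple of $n^{2a - 1 - 2 \gamma \tilde q} t^2 = n^\gamma t^2$ by the choice $\gamma = (2a-1)/(2 \tilde q + 1)$, which produces the first exponential. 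Passing back from $\tilde H_n$ to $H_n$ via $|H_n - \tilde H_n| \le c_1 n^q \1{E_n^c}$ pointwise and the bias $|\E{H_n} - \E{\tilde H_n}| \le c_1 n^q \p(E_n^c)$ yields the polynomial prefactor multiplying the second exponential.

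The most delicate step is the propagation of the exchange-one cost through the Marton coupling: each elementary swap creates an intermediate point cloud, and to bound the resulting local counts $L_j$ uniformly one needs control on all such intermediate configurations, not only on $\mX_n$ itself. This is precisely what forces the detour through the good event $E_n$, and it simultaneously fixes the optimal trade-off between the martingale increment size $M^{\tilde q}$ and the Chernoff tail $\exp(-M)$, which is realised exactly at $M \asymp n^\gamma$ with $\gamma = (2a-1)/(2\tilde q + 1)$.
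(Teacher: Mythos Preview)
Your proof sketch shares the same skeleton as the paper's argument --- martingale decomposition along the natural filtration, Marton coupling to represent $D_i$ as an expected difference of two coupled trajectories, telescoping through single swaps, controlling each swap by the local exchange-one cost \eqref{E:ExchOneCost}, and a Chernoff bound for local point counts at net centres. Where you diverge is in the concentration step: the paper does \emph{not} construct a truncated functional $\tilde H_n$ and apply the McDiarmid--Samson--Paulin inequality. Instead it invokes the martingale inequality of \cite{chalker1999size},
\[
\p\Big(\Big|\sum_i V_{n,i}\Big|>b_1\Big)\le 2\exp\Big(-\frac{b_1^2}{32nb_2^2}\Big)+\Big(1+\frac{2\sup_i\|V_{n,i}\|_\infty}{b_1}\Big)\sum_i\p(|V_{n,i}|>b_2),
\]
which accepts unbounded differences and pays only for the tail $\p(|V_{n,i}|>b_2)$. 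The paper then bounds this tail by Markov's inequality applied to the \emph{conditional} probability that $\mX_n$ leaves the good set $\mA_{n,u}$, and it is here that the covering number and Chernoff estimate enter. This route sidesteps entirely the question of how to manufacture a globally Lipschitz $\tilde H_n$.

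Your truncation route is viable in principle but has a gap as written. You assert that one can choose $\tilde H_n$ so that it agrees with $H_n$ on $E_n$, has exchange-one constant $2c_2M^{\tilde q}$ everywhere, \emph{and} satisfies $|H_n-\tilde H_n|\le c_1 n^q$ pointwise on $E_n^c$. The first two properties are attainable by a McShane extension of $H_n|_{E_n}$ in the Hamming metric, but the third does not follow: the universal bound \eqref{E:UniversalBound} controls a single swap, so for $x\in E_n^c$ one only gets $|H_n(x)-\tilde H_n(x)|\lesssim c_1 n^{q}\cdot d_H(x,E_n)\le c_1 n^{q+1}$. This extra factor $n$ propagates into the polynomial prefactor of the second term; the exponential rate $n^\gamma$ survives, so your argument still yields the qualitative conclusion, but it does not reproduce the stated inequality with its exact prefactors. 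A related imprecision is the displayed bound $|D_i|\lesssim c_2\sum_{j\ge i}\Gamma^{(n)}_{i,j}L_j^{\tilde q}$: the left-hand side is $\cF_i$-measurable while the $L_j$ are not, so this cannot hold as a pointwise inequality; the paper instead bounds $|V_{n,i}|$ by a deterministic constant plus $c_1 n^q$ times the conditional probability of the bad event, which is what allows the Chalker et~al.\ inequality to be applied cleanly.
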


The persistent Betti function satisfies the conditions of Theorem~\ref{T:AbstractExpIneq}, this follows from the Geometric Lemma (Lemma~\ref{L:GeometricLemma}). More precisely, we have a result as follows.
\begin{theorem}\label{T:ExponentialInequality}
Let the regularity conditions from Theorem~\ref{T:AbstractExpIneq} be satisfied. Then for each $q\in\N_0$, for each $(r,s)$, $r\le s$, for $ a>1/2$ and $\gamma = (2a-1)/(2q+3)$ such that the persistent Betti number from Definition~\ref{D:PersBetti} satisfies for each $t>0$
\begin{align*}
		&\p\left(	|\beta_q^{r,s}(\cK(\eta_n \mX_n)) - \E{\beta_q^{r,s}(\cK(\eta_n \mX_n))} | \ge n^a t	\right)\\
&\le 2 \exp\left(	-  \frac{n^\gamma t^2 }{2 (2^{q+1} 32 \gamma_\infty)^2} \right) + \frac{e n^{(q+1)(1-\gamma)+q-a}}{2^{q+1}\gamma_\infty} \Big( \frac{1}{n^{q-a}} + \frac{2}{t} \Big) \\
				&\quad  \times \exp\Big\{ - \big[ n^\gamma - \log\sN(2\eta_n^{-1}s,S,d) - f^*(e-1) \ n \sup_{w\in S} \mu( B(w,4\eta_n^{-1}s)) 		\big]		\Big\}.
\end{align*}
Hence, there are constants $C_1,C_2,C_3\in\R_+$ such that for each $t>0$ and $n\in\N$ 
\begin{align*}
		&\p\left(	|\beta_q^{r,s}(\cK(\eta_n \mX_n)) - \E{\beta_q^{r,s}(\cK(\eta_n \mX_n))} | \ge n^a t	\right)\\
		&\le 2 \exp\left(	-  C_1 n^\gamma t^2 \right) + C_2 t^{-1} n^{(q+1)(1-\gamma)+q-a}  \exp( - C_3 n^\gamma ).
\end{align*}
In particular, $n^{-1} (\beta_q^{r,s}(\cK(\eta_n \mX_n)) - \mathbb{E}[\beta_q^{r,s}(\cK(\eta_n \mX_n)) ] ) \rightarrow 0$ $a.s.$
\end{theorem}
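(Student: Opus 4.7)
The plan is to obtain Theorem~\ref{T:ExponentialInequality} as a direct consequence of the abstract inequality of Theorem~\ref{T:AbstractExpIneq}, applied to the functional $H(\mX) = \beta_q^{r,s}(\cK(\mX))$ (so that $H_n(\mX_n) = \beta_q^{r,s}(\cK(\eta_n\mX_n))$). The two remaining claims, i.e.\ the simplified bound with constants $C_1, C_2, C_3$ and the almost-sure convergence, will then follow by, respectively, absorbing the polylog contributions of the covering condition into the exponential decay, and by a Borel--Cantelli argument.

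The main technical input is the verification of conditions~(1) and~(2) of Theorem~\ref{T:AbstractExpIneq} for the persistent Betti number, which is exactly the content of the Geometric Lemma (Lemma~\ref{L:GeometricLemma}): swapping a vertex $x_i$ for $y$ can only add or remove simplices containing $x_i$ or $y$, so the change of $\beta_q^{r,s}$ is controlled by the number of simplices of dimension at most $q+1$ incident to the modified vertex in the filtration level $K(\cdot, s)$. Such a simplex in the \v Cech or Vietoris-Rips complex is determined by the other vertices inside a ball of $d$-radius $2\eta_n^{-1}s$ around $x_i$, so condition~(2) follows with $\wt q = q+1$ (the number of $(q+1)$-subsets among $k$ points is of order $k^{q+1}$) and a constant $c_2 = 2$ matching the normalization in the statement. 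The universal bound~(1) comes from the same count applied globally, with exponent $q$ (in the refined form supplied by the Geometric Lemma) and $c_1 = 1$.

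Having verified the hypotheses, Theorem~\ref{T:AbstractExpIneq} with $\wt q = q+1$ yields $\gamma = (2a-1)/(2q+3)$ and reproduces the first displayed inequality of Theorem~\ref{T:ExponentialInequality} line by line. To pass to the second, simplified form, I invoke the covering condition: by \eqref{C:Covering} and \eqref{C:Covering2}, both $\log\sN(2\eta_n^{-1}s, S, d)$ and $f^\ast(e-1)\, n\sup_{w\in S}\mu(B(w, 4\eta_n^{-1}s))$ are $O((\log n)^{\max(\alpha_1, \alpha_2)})$, whereas $n^\gamma$ grows as a positive power of $n$. Hence, for $n$ sufficiently large, the bracket $[n^\gamma - \log\sN - \ldots]$ exceeds $\tfrac{1}{2} n^\gamma$, and the exponential factor dominates the remaining polynomial-in-$n$ prefactor; absorbing the resulting constants into $C_1, C_2, C_3$ yields the announced form.

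For the almost-sure statement, fix $t > 0$ and apply the simplified bound with $a = 1$ (admissible since $1 > 1/2$), so that $\gamma = 1/(2q+3) > 0$. Both terms decay faster than any polynomial in $n$ -- the first is a stretched exponential in $n$, the second is a polynomial in $n$ times $\exp(-C_3 n^\gamma)$ -- hence are summable. Borel--Cantelli then gives $\p(\{ n^{-1}|\beta_q^{r,s}(\cK(\eta_n\mX_n)) - \E{\beta_q^{r,s}(\cK(\eta_n\mX_n))}| > t \text{ i.o.}\}) = 0$, and a countable union over $t = 1/k$, $k \in \N$, delivers $n^{-1}(\beta_q^{r,s}(\cK(\eta_n\mX_n)) - \E{\beta_q^{r,s}(\cK(\eta_n\mX_n))}) \to 0$ almost surely. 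The main obstacle throughout is the Geometric Lemma step, which encapsulates the careful simplex bookkeeping behind conditions~(1) and~(2); everything else is a rearrangement of constants and a standard Borel--Cantelli argument.
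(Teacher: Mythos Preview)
Your proposal is correct and follows essentially the same approach as the paper: apply Theorem~\ref{T:AbstractExpIneq} to $H=\beta_q^{r,s}$, verify conditions~(1) and~(2) via the Geometric Lemma (Lemma~\ref{L:GeometricLemma}) with the same constants $c_1=1$, $c_2=2$, $r=2s$, $\wt q=q+1$, and then invoke the covering conditions and Borel--Cantelli. Your write-up is in fact slightly more explicit than the paper's on the last two steps (the simplified bound and the a.s.\ convergence), which the paper leaves implicit.
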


\cite{yogeshwaran2017random} obtain (among other results) an exponential inequality for Betti numbers computed from an $n$-binomial process, whose marginals have a continuous and compactly supported density on $\R^d$.
Their result, in particular, their rate is very similar to our result.

The abstract concentration inequality in Theorem~\ref{T:AbstractExpIneq} can be considered as a generalization of McDiarmid's inequality for functionals of dependent random vectors whose martingale differences are not necessarily bounded. Technically it relies on the Marton coupling and an abstract concentration inequality of \cite{chalker1999size} for martingale differences as well as on the observation that point cloud data from the data generating process tends to evenly spread across the state space $S$. So exchanging one point in the argument of the functional as in \eqref{E:ExchOneCost} tends to have a much smaller impact than the worst case bound in \eqref{E:UniversalBound}.

Many important functionals in stochastic geometry do not possess a deterministic local exchange-one cost function as required in Theorem~\ref{T:AbstractExpIneq}. Instead these functionals are often stabilizing (at an exponential rate). We refer to \cite{penrose2001central} and \cite{lachieze2019normal} for an introduction and examples of stabilizing functionals. Loosely speaking stabilization implies that the exchange-one cost function is determined by the points in a ``small'' $r$-neighborhood of the exchanged points with a high probability. So the abstract result in Theorem~\ref{T:AbstractExpIneq} will also be relevant for such stabilizing functionals because usually these can be truncated in such a way that the error is negligible for large sample sizes and that the exchange-one cost of the truncated functional actually satisfies \eqref{E:ExchOneCost}. Of course, this remark is an outline of how to apply Theorem~\ref{T:AbstractExpIneq} to such functionals and we leave it to future research to prove rigorously this claim.

\subsection{Convergence results in Euclidean space}
It remains to show that the normalized expectation of the persistent Betti numbers converges to a limit. Here we restrict our considerations to point cloud data on $\R^p$ because of the following reason: In order to obtain limit theorems for persistent Betti numbers in the critical regime from dependent data realized on a general measure space such as a manifold or a function space, a possible way would be to first derive the limit of $n^{-1} \mathbb{E}[\beta_q^{r,s}(\cK(\eta_n \cP_n))]$ for a certain underlying Poisson process $\cP_n$ on this space (see \cite{last2017lectures} for the notion of a Poisson process on a general space). In this step, the topological properties of the underlying space are of course crucial. Second, one needs to apply a de-Poissonization argument to obtain a limit for the binomial process which treats the situation for an i.i.d.\ sample. Finally, as we will see from the applied techniques in the proofs, the nearly additive properties of persistent Betti numbers in the critical regime enable us to use certain continuity results and then allow us to conclude the case for dependent data. This entire procedure is quite comprehensive. So far, to the best of our knowledge, these extensions have only been considered for manifolds (\cite{goel2018strong}) in the literature. For this reason we have decided to limit our considerations to $\R^p$-valued data.

We study two kind of processes $X=(X_t:t\in\Z) \subseteq [0,1]^p$ in the critical regime, namely, (1) processes which can be coupled to a process with a discrete state space and (2) Markov chains of finite order.

First consider a process $X$ which is obtained from a stationary discrete process $Z = (Z_t: t\in\Z)$ as follows. Let $\kappa$ be a blocked density w.r.t.\ the Lebesgue measure on $[0,1]^p$, i.e., there is an $m\in\N$ such that
\begin{align}
		\kappa = \sum_{i=1}^{m^p} \alpha_i \1{A_i},\quad \alpha_i > 0,\quad \sum_{i=1}^{m^p} \alpha_i |A_i| =1 \label{E:BlockedDensity} \tag{A5}
	\end{align}
and where the subcubes $A_i$ partition $[0,1]^p$. Note that the $A_i$ may have different volumes. 

We assume that the process $Z$ admits a Marton coupling which satisfies \eqref{C:MixingMatrix} and takes values in a finite set $S=(s_1,\ldots,s_{m^d})$, $s_i\neq s_j$ for $1\le i\neq j \le m^p$, such that $\p( Z_t = s_i ) = \alpha_i |A_i|$. Define $X_t$ with the help of $Z_t$ by
\begin{align}
			X_t = \sum_{i=1}^{m^p} Y_{t,i} \1{ Z_t = s_i}, \label{E:BlockedDensity2} \tag{A6}
\end{align}
where the $Y_{t,i}$ are independent and uniformly distributed on $A_i$ for $t\in\Z$ and $1\le i\le m^d$. Then if $B\subseteq A_i$, $\p( X_t \in B ) = (|B| / |A_i|) \, (\alpha_i |A_i|) = \alpha_i |B|$. Hence, each $X_t$ admits a marginal density $\kappa$.

Then the conditional distribution of the process $X$ works as follows. In the first step and conditional on the past $(X_1,\ldots,X_{t-1})$, we choose a subcube $A_i$, according to
\[
		\p(X_t\in A_i | X_1 \in A_{j_1} ,\ldots,X_{t-1}\in A_{j_{t-1}} ) =\p( Z_t = s_i | Z_1 = s_{j_1} ,\ldots,Z_{t-1} = s_{j_{t-1}} ).
			\]
In the second step, we choose at random a point in the subcube $A_i$ as the realization of $X_t$. 

Consequently, $X=(X_t:t\in\Z)$ admits a Marton coupling which satisfies \eqref{C:MixingMatrix}. The conditional distribution of $X$ is invariant in the sense that
\begin{align}\label{E:InvarianceOnCells}
			\p( X_t^{t+\ell} \in A | X_{t-s}^{t-1} = x_{t-s}^{t-1} ) = \p( X_t^{t+\ell} \in A | X_{t-s}^{t-1} = y_{t-s}^{t-1} ),
\end{align}
for all $x_i,y_i \in A_{i}$, $i=t-s,\ldots,t-1$, $\ell,s\in \N$, $t-s\ge 1$. If we can only observe the process $X$, then we can think of $Z$ as a hidden process. 
We have the following theorem.

\begin{theorem}\label{T:ConvergenceExpectationDiscrete}
Let $X=( X_t: t\in\Z )$ be a $[0,1]^p$-valued process which admits a Marton coupling that satisfies \eqref{C:MixingMatrix}. Each $X_t$ has a marginal density $\kappa$ as in \eqref{E:BlockedDensity} and \eqref{E:BlockedDensity2} such that $0<\inf \kappa \le \sup \kappa < \infty$. Then for each $0\le q\le p-1$
\begin{align}
			\lim_{n\rightarrow \infty} n^{-1} \E{ \beta^{r,s}_q( \cK( n^{1/p} \mX_n) ) } &= \E{ \hat{b}_q(\kappa(X_t)^{1/p}(r,s)) },	\label{E:LimitDiscrete} \\
			\lim_{n\rightarrow \infty}  n^{-1} \beta^{r,s}_q( \cK( n^{1/p} \mX_n) )  &= \E{ \hat{b}_q(\kappa(X_t)^{1/p}(r,s)) } \quad  a.s., \label{E:LimitDiscreteB}
\end{align}
where $\hat{b}_q(r,s)$ is the limit of $n^{-1} \E{ \beta^{r,s}_q(\cK(n^{1/p} \mX^*_n)) }$ for an $n$-binomial process $\mX^*_n$ with unit density on $[0,1]^p$.
\end{theorem}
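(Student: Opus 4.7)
The plan is to decompose $[0,1]^p$ into the subcubes $A_i$ on which $\kappa$ is constant and equal to $\alpha_i$, to exploit the \emph{near-additivity} of persistent Betti numbers in the critical regime so as to reduce the Betti count to a sum of cell-wise Betti counts, and then to recognise each cell-wise count as that of a random-size uniform binomial process, for which the limit $\hat{b}_q$ is already known.

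First, I would apply the Geometric Lemma (Lemma~\ref{L:GeometricLemma}) to the scaled cloud $n^{1/p}\mX_n$ cut along the scaled boundaries $n^{1/p}\partial A_i$, which yields
\[
\Bigl|\beta_q^{r,s}\bigl(\cK(n^{1/p}\mX_n)\bigr)-\sum_{i=1}^{m^p}\beta_q^{r,s}\bigl(\cK(n^{1/p}(\mX_n\cap A_i))\bigr)\Bigr|\le C_{q,s}\,R_n,
\]
where $R_n$ counts the $q$- and $(q+1)$-simplices of $\cK(n^{1/p}\mX_n,s)$ whose vertex set spans more than one cell. Any such simplex has rescaled diameter at most $s$, so all of its vertices must lie in a tube of thickness $O(n^{-1/p}s)$ around $\bigcup_i\partial A_i$; combining this with the fact that the other vertices of the simplex lie within an $n^{-1/p}s$-ball of some anchor point in the tube, \eqref{C:RegularityDensity} together with a Marton-coupling moment bound give $\E{R_n}=O(n^{1-1/p})=o(n)$. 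Writing $N_{n,i}=\#(\mX_n\cap A_i)$, construction \eqref{E:BlockedDensity2} makes $\mX_n\cap A_i$, conditional on the latent chain $Z_1,\ldots,Z_n$, an i.i.d.\ uniform sample of size $N_{n,i}$ on $A_i$. Using the homogeneity $K(c\mX,r)=K(\mX,c^{-1}r)$ together with the affine rescaling $\phi_i\colon A_i\to[0,1]^p$ one obtains
\[
n^{-1}\beta_q^{r,s}\bigl(\cK(n^{1/p}(\mX_n\cap A_i))\bigr)=\frac{N_{n,i}}{n}\cdot N_{n,i}^{-1}\beta_q^{r_{n,i},s_{n,i}}\bigl(\cK(N_{n,i}^{1/p}\phi_i(\mX_n\cap A_i))\bigr),
\]
with $(r_{n,i},s_{n,i})=(N_{n,i}/(n|A_i|))^{1/p}(r,s)$.

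Under \eqref{C:MixingMatrix}, the stationary binary sequence $\1{X_t\in A_i}$ satisfies a law of large numbers (a direct Marton-coupling variance bound suffices), so $N_{n,i}/n\to\alpha_i|A_i|$ almost surely and in $L^1$, and $(r_{n,i},s_{n,i})\to\alpha_i^{1/p}(r,s)$. Conditional on $N_{n,i}$, the inner factor is the rescaled persistent Betti number of an $N_{n,i}$-binomial process uniform on $[0,1]^p$, and by the defining property of $\hat{b}_q$ its conditional expectation tends to $\hat{b}_q(\alpha_i^{1/p}r,\alpha_i^{1/p}s)$; monotonicity of $\beta_q^{r,s}$ in each of its parameters, together with continuity of $\hat{b}_q$, absorbs the residual randomness of $(r_{n,i},s_{n,i})$ via a sandwich. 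The universal bound $\beta_q^{r,s}\le\binom{N_{n,i}}{q+2}$ and moment control on $N_{n,i}/n$ then justify the exchange of limit and expectation, and summing over $i$ gives
\[
\lim_{n\to\infty}n^{-1}\E{\beta_q^{r,s}(\cK(n^{1/p}\mX_n))}=\sum_{i=1}^{m^p}\alpha_i|A_i|\,\hat{b}_q\bigl(\alpha_i^{1/p}r,\alpha_i^{1/p}s\bigr)=\E{\hat{b}_q\bigl(\kappa(X_t)^{1/p}(r,s)\bigr)},
\]
which is \eqref{E:LimitDiscrete}. The almost-sure statement \eqref{E:LimitDiscreteB} then follows at once from \eqref{E:LimitDiscrete} and Theorem~\ref{T:ExponentialInequality}: picking any exponent $a\in(1/2,1)$ there makes the tail of $n^{-a}|\beta_q^{r,s}-\E{\beta_q^{r,s}}|$ summable in $n$, and Borel--Cantelli yields $n^{-1}(\beta_q^{r,s}(\cK(n^{1/p}\mX_n))-\E{\beta_q^{r,s}(\cK(n^{1/p}\mX_n))})\to 0$ almost surely.

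I expect the hard part to be the boundary-straddling step: one must genuinely confirm that $\E{R_n}=o(n)$ under dependence. In the i.i.d.\ case a single-anchor argument gives $\E{R_n}\asymp n^{1-1/p}$, and this moment bound has to be extended to the Marton-coupled setting, which is exactly where \eqref{C:RegularityDensity} (local density control) and \eqref{C:MixingMatrix} (decoupling of sums of local indicators) combine. Once this boundary bound is in place, the remaining pieces---homogeneity, LLN for $N_{n,i}$, the sandwich using continuity of $\hat{b}_q$, and Borel--Cantelli for the almost-sure statement---are routine.
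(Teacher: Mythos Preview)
Your proposal follows essentially the same three-step architecture as the paper: cell decomposition via the Geometric Lemma, recognition of the restricted samples $\mX_n\cap A_i$ as conditionally i.i.d.\ uniform on $A_i$ given the latent chain $Z$, and passage to the known binomial limit $\hat b_q$, with Borel--Cantelli (via Theorem~\ref{T:ExponentialInequality}) yielding the almost-sure statement. Where you and the paper differ is tactically, in how the random cell count $N_{n,i}$ is handled. The paper conditions on the full success/failure path $J$ of the hidden chain, uses the invariance \eqref{E:InvarianceOnCells} to factor out $\p(X_1^n\in J)$, and then invokes an external uniform-approximation lemma (Lemma~5.5 of \cite{krebs2018asymptotic}) to replace the random $k$ by its mean $\mu_{n,i}=\lfloor n\alpha_i|A_i|\rfloor$ uniformly over $k$ in a window of width $n^{3/4}$ around $\mu_{n,i}$; that window is obtained from McDiarmid's inequality (Theorem~\ref{Thrm:McDiarmidIneq}) via the Marton coupling. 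Your sandwich using monotonicity of $\beta_q^{r,s}$ in $(r,s)$ and continuity of $\hat b_q$ achieves the same end more directly, at the price of leaving the required uniformity implicit; either route works.

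One correction worth making: you misplace the role of the Marton coupling. The boundary term $\E{R_n}$ is \emph{not} where dependence bites---Lemma~\ref{L:SimplicesAndMeasure} bounds the expected number of simplices straddling $\bigcup_i\partial A_i$ using only the conditional-density bound \eqref{C:RegularityDensity}, and gives $n^{-1}\E{R_n}=O(n^{-1/p})$ with no appeal to \eqref{C:MixingMatrix} at all. The Marton coupling enters only to control the fluctuations of $N_{n,i}$ (via McDiarmid in the paper's version, or via your LLN) and in the exponential inequality for the a.s.\ statement. Also note a technicality your affine map $\phi_i$ glosses over: if the cells $A_i$ are rectangular boxes with unequal edge lengths, $\phi_i$ is not a similarity, so the scalar homogeneity $K(c\mX,r)=K(\mX,c^{-1}r)$ does not apply directly; the paper records this via the diagonal matrix $M_i=\operatorname{diag}(\ell_{i,1},\ldots,\ell_{i,p})$ and defers the anisotropic-scaling limit to Lemma~10 of \cite{divol2018persistence}.
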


So the expectation of the persistent Betti number obtained from this kind of time series has the same limit properties as the corresponding binomial process.

We extend Theorem~\ref{T:ConvergenceExpectationDiscrete} to general marginal density functions $\kappa\colon [0,1]^p \to \R_+$ which can be approximated by blocked density functions $\kappa_\epsilon$. To this end, we restrict ourselves to the case of uniformly ergodic Markov chains of order $m$, viz., $X=(X_t:t\in\Z)$ is a stationary process such that $\cL(X_t|X_u: u< t ) = \cL( X_t| X_{t-1},\ldots,X_{t-m})$, for some $m\in\N$. For such a Markov chain all transition probabilities are determined by the joint density $g$ of $X_1,\ldots,X_{m+1}$ which is assumed to be continuous and strictly positive on $[0,1]^{(m+1)p}$ in that $\inf \{ g(z): z\in [0,1]^{(m+1)p} \} > 0$. It is known that this kind of aperiodic restriction ensures the Markov chain $X$ to be uniformly geometrically ergodic, see also \cite{meyn2012markov}~Theorem 16.0.2.

Furthermore, the limit on the right-hand side in \eqref{E:LimitDiscrete} is continuous: Indeed, \cite{divol2018persistence} show that the limit
\begin{align}\label{E:LimitContinuity}
			\lim_{\epsilon\to 0} \E{  \hat{b}_q(\kappa_\epsilon(Y_\epsilon)^{1/p}(r,s)) } = \E{  \hat{b}_q(\kappa(Y)^{1/p}(r,s)) }
\end{align}
exists, where $\kappa_\epsilon$ are blocked density functions on $[0,1]^p$ (from a regular grid) as in \eqref{E:BlockedDensity} which converge to $\kappa$ in the $\|\cdot\|_\infty$-norm and where the $Y_\epsilon$ (resp. $Y$) have density $\kappa_\epsilon$ (resp. $\kappa$).

For this kind of Markov chains $X$ we obtain from the previous Theorem~\ref{T:ConvergenceExpectationDiscrete} the following result.
\begin{theorem}\label{T:ConvergenceExpectationContinuous}
Let $X$ be a homogeneous Markov chain of order $m$ taking values in $[0,1]^p$ such that the joint density $g$ of $X_1,\ldots,X_{m+1}$ is continuous and satisfies $\inf \{ g(z): z\in [0,1]^{(m+1)p} \} > 0$. The $X_t$ have marginal density $\kappa$. Then for each $q=0,\ldots,p-1$ the convergence results from \eqref{E:LimitDiscrete} and \eqref{E:LimitDiscreteB} in Theorem~\ref{T:ConvergenceExpectationDiscrete} are valid.
\end{theorem}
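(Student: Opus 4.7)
The strategy is to approximate the continuous, strictly positive joint density $g$ of $(X_1,\ldots,X_{m+1})$ by blocked densities $g_\epsilon$, apply Theorem~\ref{T:ConvergenceExpectationDiscrete} to the associated chains, and invoke the continuity of the limit in the density~\eqref{E:LimitContinuity}.

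For each $\epsilon>0$, I would partition $[0,1]^{(m+1)p}$ into cubes of side $\epsilon$ and let $g_\epsilon$ be the step function equal to $|A|^{-1}\int_A g$ on each cube $A$. Continuity and strict positivity of $g$ on the compact cube ensure $\|g_\epsilon-g\|_\infty\to 0$ and $\inf g_\epsilon>0$ uniformly for all small $\epsilon$; in particular, the induced marginal $\kappa_\epsilon$ is a blocked density converging uniformly to $\kappa$. Let $X^{(\epsilon)}$ denote the stationary Markov chain of order $m$ on $[0,1]^p$ whose joint density of any $m+1$ consecutive coordinates is $g_\epsilon$. By construction, $X^{(\epsilon)}$ fits the framework of \eqref{E:BlockedDensity}--\eqref{E:BlockedDensity2}: its cell-indicator process is a uniformly geometrically ergodic Markov chain on a finite state space and therefore admits a Marton coupling satisfying \eqref{C:MixingMatrix} via the construction of Example~\ref{MarkovChainDelayEmbedding}. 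Theorem~\ref{T:ConvergenceExpectationDiscrete} applied to $X^{(\epsilon)}$ then yields, for each $q\in\{0,\ldots,p-1\}$,
\[
\lim_{n\to\infty} n^{-1}\,\E{\beta^{r,s}_q(\cK(n^{1/p}\mX^{(\epsilon)}_n))}=\E{\hat b_q(\kappa_\epsilon(Y_\epsilon)^{1/p}(r,s))},
\]
where $Y_\epsilon$ has density $\kappa_\epsilon$, together with the corresponding a.s.\ convergence.

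The core remaining step is to establish that
\[
\Delta_n(\epsilon):=n^{-1}\bigl|\E{\beta^{r,s}_q(\cK(n^{1/p}\mX_n))}-\E{\beta^{r,s}_q(\cK(n^{1/p}\mX^{(\epsilon)}_n))}\bigr|\quad\text{satisfies}\quad\limsup_{n\to\infty}\Delta_n(\epsilon)\to 0\ \text{as}\ \epsilon\to 0.
\]
I would couple $X$ and $X^{(\epsilon)}$ via Goldstein's maximal coupling of their transition kernels (whose step-wise total variations are controlled by $\|g-g_\epsilon\|_\infty/\inf g_\epsilon$), arranged so that both chains visit the same $\epsilon$-cells at every time and therefore differ pointwise by at most $\epsilon\sqrt p$. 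Since the rescaling amplifies this to $n^{1/p}\epsilon$, a direct bottleneck-stability argument fails and I would instead exploit the nearly additive behaviour of persistent Betti numbers. Partition $[0,1]^p$ into intermediate blocks $B$ of side $\delta(n)$ with $n^{-1/p}s\ll\delta(n)\ll 1$: by the Geometric Lemma, $\beta^{r,s}_q(\cK(n^{1/p}\mX_n))$ decomposes into block contributions up to boundary error of order $o(n)$. Within each block $\kappa$ is nearly constant, so a localized version of Theorem~\ref{T:ConvergenceExpectationDiscrete} controls both $\mX_n\cap B$ and $\mX^{(\epsilon)}_n\cap B$, while the uniform convergence $\|\kappa-\kappa_\epsilon\|_\infty\to 0$ bounds the aggregated blockwise discrepancy by the modulus of continuity of $\hat b_q\circ\kappa^{1/p}$, which vanishes with $\epsilon$.

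Combining the above with \eqref{E:LimitContinuity} yields \eqref{E:LimitDiscrete}. The almost-sure statement \eqref{E:LimitDiscreteB} follows from the mean convergence and the exponential inequality in Theorem~\ref{T:ExponentialInequality} (applicable on $[0,1]^p$ with Lebesgue measure and the critical scaling $\eta_n=n^{1/p}$) via a standard Borel-Cantelli argument. The principal obstacle lies in the comparison step: the $\epsilon$-coupling discrepancy is amplified to $n^{1/p}\epsilon$ after rescaling, so the proof must exploit the local near-additive structure of persistent Betti numbers together with the local almost-stationarity of the Markov chain in order to reduce the global comparison to an integrated modulus of continuity of $\kappa$ that vanishes as $\epsilon\to 0$.
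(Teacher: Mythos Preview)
Your overall strategy---approximate $g$ by blocked densities $g_\epsilon$, apply Theorem~\ref{T:ConvergenceExpectationDiscrete} to the approximating chain, and close up via the continuity result~\eqref{E:LimitContinuity}---matches the paper exactly, and your derivation of \eqref{E:LimitDiscreteB} from \eqref{E:LimitDiscrete} via Theorem~\ref{T:ExponentialInequality} and Borel--Cantelli is also what the paper does.

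The gap is entirely in the comparison step $\Delta_n(\epsilon)$. Your proposed coupling ``so that both chains visit the same $\epsilon$-cells at every time'' cannot be arranged: Goldstein's maximal coupling of the one-step kernels gives $X_t=X^{(\epsilon)}_t$ with high probability, not $\|X_t-X^{(\epsilon)}_t\|\le\epsilon$ on the complementary event, and there is no mechanism forcing the chains into the same cell when the coupling fails. More seriously, your block decomposition with intermediate scale $\delta(n)$ does not address the comparison at all: within a block you still have two \emph{different} dependent samples (from $X$ and from $X^{(\epsilon)}$), and ``a localized version of Theorem~\ref{T:ConvergenceExpectationDiscrete}'' does not exist in the paper and would itself require precisely the comparison you are trying to establish. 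Passing to the limit in each block and invoking the modulus of continuity of $\hat b_q\circ\kappa^{1/p}$ controls the difference of the \emph{limits}, which is already the term~\eqref{EQ:ConvergenceExpectationContinuous5}, not $\Delta_n(\epsilon)$.

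The paper handles $\Delta_n(\epsilon)$ by a completely different device. It writes the difference of the two expectations as a telescoping sum of $n$ terms, where in the $t$-th term exactly one conditional density $f_i$ is replaced by $f_{\epsilon,i}$; see \eqref{EQ:ConvergenceExpectationContinuous8}. Each such term is the expected Betti-number difference for two chains $(Z'_{t,\cdot},\wt Z_{t,\cdot})$ that agree up to time $t-1$, are maximally coupled at time $t$ so that $\p(Z'_{t,t}\neq\wt Z_{t,t})\le\epsilon/4$, and thereafter are coupled via the Marton coupling of $X$, so that the coupling time $\tau_c(t)$ has all moments bounded uniformly by \eqref{EQ:ConvergenceExpectationContinuous2}. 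The Geometric Lemma then reduces the Betti-number difference to a sum of at most $\tau_c(t)-t$ simplex counts of the form $K_k(\,\cdot\,,n^{-1/p}s;{\rm point})$, whose moments are bounded uniformly in $n$ by the argument of Lemma~\ref{L:SimplicesAndMeasure}. A H\"older split between the indicator $\1{Z'_{t,t}\neq\wt Z_{t,t}}$ and the simplex counts yields that each of the $n$ terms is $O(\epsilon^{1/\ol w})$ \emph{uniformly in $t$ and $n$}, so the average over $t$ is $O(\epsilon^{1/\ol w})$. The essential idea you are missing is that one should count how many \emph{time indices} disagree (controlled by uniform ergodicity), not how far apart the points are in space; the spatial amplification by $n^{1/p}$ never enters because the simplex count around a single point has bounded moments in the critical regime.
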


Consequently, we obtain also for this natural generalization of the binomial process the well-known limit. The generalization to arbitrary stationary processes $X$ which admit a Marton coupling is rather elaborate and complex. Actually, when following the current scheme of the proof, one first has to assume that this process $X$ can be coupled to a discrete process $\wt{X}$ which approximates $X$ sufficiently closely in terms of the conditional distribution functions. This would mainly result in a complex notation. For this reason, we have limited our considerations to processes whose conditional distributions only depend on $m$ lags of its past, this is sufficient for many applications and also serves as an approximation to the general case.

We conclude with an immediate result which follows from the Theorem~\ref{T:ConvergenceExpectationDiscrete} and Theorem~\ref{T:ConvergenceExpectationContinuous} and the work of \cite{hiraoka2018limit} concerning the vague convergence of persistence diagrams.
\begin{corollary}[Vague convergence of persistence diagrams obtained from dependent data] \label{C:VagueConvergence}
Let the assumptions of Theorem~\ref{T:ConvergenceExpectationDiscrete} or Theorem~\ref{T:ConvergenceExpectationContinuous} be satisfied. Then for each $0\le q\le p-1$, there is a Radon measure $\xi_q$ depending on $\kappa$ such that $\E{ \xi_{n,q } } \overset{v_c}{\to} \xi_q$ and $\xi_{n,q} \overset{v_c}{\to} \xi_q$ $a.s.$ as $n\to\infty$.
\end{corollary}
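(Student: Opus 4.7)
The plan is to reduce the corollary to pointwise convergence of the rescaled persistent Betti numbers and then invoke the vague-convergence framework developed in \cite{hiraoka2018limit} for the i.i.d.\ setting. Let $\xi_{n,q} \coloneqq n^{-1} \xi_q(\cK(n^{1/p}\mX_n))$ denote the rescaled persistence counting measure on $\Delta$. Using the identity $\xi_q(\cK)([0,r] \times (s,\infty]) = \beta^{r,s}_q(\cK)$ recalled in Section~\ref{Sec_Notation}, the values of $\xi_{n,q}$ on the semiring of upper-left rectangles, and hence on every bounded half-open rectangle in $\Delta$ via an alternating four-corner sum, are completely determined by the rescaled persistent Betti function.

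First I would apply Theorem~\ref{T:ConvergenceExpectationDiscrete} or Theorem~\ref{T:ConvergenceExpectationContinuous} at every rational pair $(r,s)$ with $0 \le r \le s < \infty$. Writing $\Phi_q(r,s) \coloneqq \E{\hat{b}_q(\kappa(X_t)^{1/p}(r,s))}$, these theorems yield
\[
\E{\xi_{n,q}([0,r] \times (s,\infty])} \to \Phi_q(r,s) \quad \text{and} \quad \xi_{n,q}([0,r] \times (s,\infty]) \to \Phi_q(r,s) \text{ a.s.}
\]
The almost-sure statement is made simultaneous in $(r,s)$ by intersecting the countably many probability-one events produced by the cited theorems over all rational pairs. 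Consequently the mass assigned by $\xi_{n,q}$ to any half-open rectangle with rational corners in $\Delta$ converges, both in expectation and almost surely, to the corresponding alternating sum of $\Phi_q$.

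Second, I would invoke the argument of \cite{hiraoka2018limit}, which lifts pointwise convergence of the normalized persistent Betti function on a dense set of parameters to vague convergence of the underlying persistence diagrams as Radon measures on $\Delta$. Concretely, one constructs a Radon measure $\xi_q$ on $\Delta$ satisfying $\xi_q([0,r] \times (s,\infty]) = \Phi_q(r,s)$ by Carath\'eodory extension from the semiring of rectangles, and then deduces $\E{\xi_{n,q}} \overset{v_c}{\to} \xi_q$ together with $\xi_{n,q} \overset{v_c}{\to} \xi_q$ almost surely. The main obstacle is the behavior near the diagonal $\{b=d\}$, where persistence diagrams may concentrate many short-lived features; vague convergence, however, only tests against functions in $C_c(\Delta)$ whose supports are compact and therefore automatically bounded away both from the diagonal and from infinity. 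On any such compact support a finite cover by rationally-cornered rectangles reduces the test-integral to finitely many corner evaluations, and the desired convergence then follows from the first step without any further dependence-specific input.
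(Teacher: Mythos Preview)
Your proposal is correct and follows the same approach as the paper: obtain pointwise convergence of the rescaled persistent Betti numbers from Theorem~\ref{T:ConvergenceExpectationDiscrete} or Theorem~\ref{T:ConvergenceExpectationContinuous}, then appeal to \cite{hiraoka2018limit} to upgrade this to vague convergence of the persistence diagrams. The paper's proof is a one-line citation of Proposition~3.4 in \cite{hiraoka2018limit}; you have essentially sketched the content of that proposition (construction of the limit measure from the rectangle values and the reduction of $C_c(\Delta)$-integrals to finitely many corner evaluations), so your write-up is just a more unpacked version of the same argument.
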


\section{Extensions to random fields}\label{Sec_ExtensionsToRandomFields}
We extend the theory from above to random fields in two settings, these correspond then to the situations discussed in Theorems~\ref{T:ConvergenceExpectationDiscrete} and \ref{T:ConvergenceExpectationContinuous} for the time series case.

The extension to random fields requires mainly notational changes. We consider stationary random fields indexed by the regular $d$-dimensional lattice $\Z^d$. The main difference is the ordering of the data which we assume to be located in the subset $\N^d$. If $u,v \in \N^d$ are two positions on the lattice, we write $u\ge v$ ($u \le v$) if and only if $u_i \ge v_i$ ($u_i \le v_i$) for all $i\in\{1,\ldots,d\}$. Moreover, we construct a total ordering on $\N^d$ with the $\ell^1$-norm as follows. Let $u,v\in\N^d$, then
\begin{align}\label{D:Ordering}
			u >_d v :\Longleftrightarrow \; \exists j \in \{1,\ldots,d\}: \|u_1^j\|_1 > \|v_1^j \|_1 \text{ and } \|u_1^k\|_1 = \|v_1^k\|_1 \text{ for } k = j+1,\ldots,d,
\end{align}
where $\|u_1^k\|_1 = |u_1|+\ldots+|u_k|$. The relations $>_d,\le_d,\ge_d$ follow in the same spirit.

For a vector $N=(N_1,\ldots,N_d)\in\N^d$, we denote the cardinality of the corresponding $d$-cube $\prod_{i=1}^d\{1,\ldots,N_i\}$ by $\pi(N)$. For a given a random field $X=(X_u:u\in\Z^d)$ and an $N\in\N^d$, we write $\mX_N$ for the associated point cloud $\{ X_u: u\le N\}$, which represents the sample data. In the following, we will consider only such $N\in\N^d$ which satisfy
\begin{align}\label{E:MinMaxQuot}
			\min\{N_i: i \in \{1,\ldots,d\} \} / \max\{ N_i: i \in\{1,\ldots,d\} \} \ge \bar{c},
\end{align}
for some constant $\bar{c}\in\R_+$. We write $N\to \infty$ for a sequence $N(k)\subseteq \N^d$ which satisfy the relation \eqref{E:MinMaxQuot} for each $N(k)$ and also fulfills $\max\{ N_i(k): i\in\{1,\ldots,d\} \}\to\infty$ as $k\to\infty$.

Consider a Marton coupling $( (X_u,X'_u): u\in\N^d )$ of a stationary random field on $\N^d$. We define the mixing matrix $\Gamma^{(\infty)} = (\Gamma^{(\infty)}_{u,v} )_{u,v\in\N^d}$ w.r.t.\ the ordering $>_d$. The line corresponding to location $u$ in the mixing matrix $\Gamma^{(\infty)}$ is given by
\begin{align}\label{C:MixingMatrixSpatial}
			\Gamma^{(\infty)}_{u,v} =	 \esssup_{ \text{ w.r.t.\ } \mu_u } \p\left( X_v^{(x_w: w <_d u, x_u,x'_u)} \neq {X'}_v^{(x_w: w <_d u, x_u,x'_u)} \right),
\end{align}
where $v \ge_d u$ and where $\mu_u$ is defined in the same spirit as $\mu_i$ in \eqref{Def:MuIMeasure}. 

We study the structure of the entries of the mixing matrix in a simple example. Consider a stationary random field $X$ on the lattice $\Z^2$ whose joint distribution can entirely be described by four (conditional) densities $f_{(0,0)}=\kappa, f_{(0,1)}, f_{(1,0)}$ and $f_{(1,1)}$. This means for any $N\in\N^d$ the joint distribution $\{X_u: u\le N\}$ can be simulated with these four (conditional) densities and we can do this also using the ordering $<_d$, beginning at the corner point (1,1). So we first simulate $X_{(1,1)}$ according to $\kappa = f_{(0,0)}$. All observations $X_{(1,t)}$ for $1< t \le N_1$ (resp. $X_{(t,1)}$ for $1< t \le N_2$) are simulated with $f_{(0,1)}$ (resp. $f_{(1,0)}$). All remaining observations are simulated with the conditional density $f_{(1,1)}$. Figure~\ref{fig:FactorizationScheme} illustrates the scheme. 

Consider a location $u$ in the lattice and a configuration of the Marton coupling which agrees at all locations of the past of $u$ w.r.t.\ $>_d$ (all locations $v$ with $u >_d v$). Consider a point $v$ in the future of $u$ w.r.t.\ $>_d$ (all locations $v$ with $v >_d u$). Then the distributions of $X_v^{(x_w: w <_d u, x_u,x'_u)}$ and ${X'}_v^{(x_w: w <_d u, x_u,x'_u)}$ are affected by the different configurations at location $u$ if and only if $v\ge u$. Hence, \eqref{C:MixingMatrixSpatial} is only affected by the locations $v$ which satisfy $v\ge u$, which is a strict subset of all those locations $v$ which satisfy $v \ge_d u$.

We come to the description of the dependence patterns. First we consider again the blocked density function from \eqref{E:BlockedDensity} and proceed as in the case for time series. Let $Z=(Z_u: u\in\Z^d)$ be a stationary random field on the regular $d$-dimensional lattice. The state space of $Z$ is discrete, i.e., $S=\{s_1,\ldots,s_{m^p}\}$, $s_i\neq s_j$, for $1\le i\neq j \le m^p$, such that $\p( Z_u = s_i ) = \alpha_i |A_i|$. Also $Z$ admits a Marton coupling whose mixing matrix $\Gamma^{(\infty)}$ from \eqref{C:MixingMatrixSpatial} satisfies (similar as in \eqref{C:MixingMatrix})
\begin{align}
		\|\Gamma^{(\infty)}\|_\infty <\infty.\label{C:MixingMatrix2} \tag{A7}
	\end{align}
\begin{figure}[h]
\centering
\begin{tikzpicture}[scale=1.5,line cap=round,line join=round,x=1cm,y=1cm]
\clip(.92,.92) rectangle (5.1,5.1);
\fill[line width=1pt,color=zzttqq,fill=zzttqq,fill opacity=0.0] (2.5,3.5) -- (3.5,3.5) -- (3.5,2.5) -- (5.5,2.5) -- (5.5,5.5) -- (2.5,5.5) -- cycle;
\draw [->,>=stealth,line width=1pt,color=qqqqcc] (1,1) -- (1,2);
\draw [->,>=stealth,line width=1pt,color=qqqqcc] (1,2) -- (1,3);
\draw [->,>=stealth,line width=1pt,color=qqqqcc] (1,3) -- (1,4);
\draw [->,>=stealth,>=stealth,line width=1pt,dash pattern=on 2pt off 4pt,color=qqqqcc] (1,4) -- (1,5);
\draw [->,>=stealth,line width=1pt,dash pattern=on 2pt off 4pt,color=qqqqcc] (1,5) -- (2,5);
\draw [->,>=stealth,line width=1pt,dash pattern=on 2pt off 4pt,color=qqqqcc] (1,4) -- (2,4);
\draw [->,>=stealth,line width=1pt,color=qqqqcc] (1,3) -- (2,3);
\draw [->,>=stealth,line width=1pt,color=qqqqcc] (1,2) -- (2,2);
\draw [->,>=stealth,line width=1pt,color=qqqqcc] (1,1) -- (2,1);
\draw [->,>=stealth,line width=1pt,color=qqqqcc] (2,1) -- (2,2);
\draw [->,>=stealth,line width=1pt,color=qqqqcc] (2,1) -- (3,1);
\draw [->,>=stealth,line width=1pt,color=qqqqcc] (3,1) -- (3,2);
\draw [->,>=stealth,line width=1pt,color=qqqqcc] (2,2) -- (3,2);
\draw [->,>=stealth,line width=1pt,color=qqqqcc] (2,2) -- (2,3);
\draw [->,>=stealth,line width=1pt,color=qqqqcc] (3,2) -- (3,3);
\draw [->,>=stealth,line width=1pt,color=qqqqcc] (2,3) -- (3,3);
\draw [->,>=stealth,line width=1pt,color=qqqqcc] (3,1) -- (4,1);
\draw [->,>=stealth,line width=1pt,color=qqqqcc] (4,1) -- (5,1);
\draw [->,>=stealth,line width=1pt,color=qqqqcc] (4,1) -- (4,2);
\draw [->,>=stealth,line width=1pt,color=qqqqcc] (3,2) -- (4,2);
\draw [->,>=stealth,line width=1pt,dash pattern=on 2pt off 4pt,color=qqqqcc] (4,2) -- (5,2);
\draw [->,>=stealth,line width=1pt,dash pattern=on 2pt off 4pt,color=qqqqcc] (5,1) -- (5,2);
\draw [->,>=stealth,line width=1pt,dash pattern=on 2pt off 4pt,color=qqqqcc] (5,2) -- (5,3);
\draw [->,>=stealth,line width=1pt,dash pattern=on 2pt off 4pt,color=qqqqcc] (4,2) -- (4,3);
\draw [->,>=stealth,line width=1pt,dash pattern=on 2pt off 4pt,color=ffqqqq] (3,3) -- (4,3);
\draw [->,>=stealth,line width=1pt,dash pattern=on 2pt off 4pt,color=ffqqqq] (4,3) -- (5,3);
\draw [->,>=stealth,line width=1pt,dash pattern=on 2pt off 4pt,color=ffqqqq] (5,3) -- (5,4);
\draw [->,>=stealth,line width=1pt,dash pattern=on 2pt off 4pt,color=ffqqqq] (4,3) -- (4,4);
\draw [->,>=stealth,line width=1pt,dash pattern=on 2pt off 4pt,color=ffqqqq] (3,3) -- (3,4);
\draw [->,>=stealth,line width=1pt,dash pattern=on 2pt off 4pt,color=qqqqcc] (2,3) -- (2,4);
\draw [->,>=stealth,line width=1pt,dash pattern=on 2pt off 4pt,color=qqqqcc] (2,4) -- (3,4);
\draw [->,>=stealth,line width=1pt,dash pattern=on 2pt off 4pt,color=ffqqqq] (3,4) -- (4,4);
\draw [->,>=stealth,line width=1pt,dash pattern=on 2pt off 4pt,color=ffqqqq] (4,4) -- (5,4);
\draw [->,>=stealth,line width=1pt,dash pattern=on 2pt off 4pt,color=ffqqqq] (5,4) -- (5,5);
\draw [->,>=stealth,line width=1pt,dash pattern=on 2pt off 4pt,color=qqqqcc] (2,4) -- (2,5);
\draw [->,>=stealth,line width=1pt,dash pattern=on 2pt off 4pt,color=qqqqcc] (2,5) -- (3,5);
\draw [->,>=stealth,line width=1pt,dash pattern=on 2pt off 4pt,color=ffqqqq] (3,4) -- (3,5);
\draw [->,>=stealth,line width=1pt,dash pattern=on 2pt off 4pt,color=ffqqqq] (3,5) -- (4,5);
\draw [->,>=stealth,line width=1pt,dash pattern=on 2pt off 4pt,color=ffqqqq] (4,4) -- (4,5);
\draw [->,>=stealth,line width=1pt,dash pattern=on 2pt off 4pt,color=ffqqqq] (4,5) -- (5,5);
\draw [line width=1pt,color=zzttqq] (2.5,3.5)-- (3.5,3.5);
\draw [line width=1pt,color=zzttqq] (3.5,3.5)-- (3.5,2.5);
\draw [line width=1pt,color=zzttqq] (3.5,2.5)-- (5.5,2.5);
\draw [line width=1pt,color=zzttqq] (2.5,5.5)-- (2.5,3.5);
\draw [fill=ffqqqq] (3,3) circle (1.5pt);
\end{tikzpicture}
\caption{Factorization scheme of the conditional distribution functions of a stationary Markov random field on $\Z^2$ which can be described with the conditional densities $f_{(0,0)}, f_{(1,0)}, f_{(0,1)}$ and $f_{(1,1)}$. The arrows indicate the direction of the evolution (simulation) of the field. Solid blue arrows point to these locations which lie in the past (w.r.t.\ $>_d$) of the red dot (and in the present, in case of the red dot itself). The dashed arrows (red and blue) point at locations which lie in the future (w.r.t.\ $>_d$) of the red dot. The location with the red dot marks a point $u$, where the Marton coupling $(X,X')$ disagrees conditional that it agrees on all locations from the past of $u$. The brown line separates the area, which is affected from the different states at location $u$, the corresponding arrows are red. Locations, which lie in the future, but are unaffected by the different states at location $u$, are marked in blue.}
\label{fig:FactorizationScheme}
\end{figure}
Define a new random field $X = (X_u: u\in\Z^d)$ with the help of $Z$ by
\begin{align}
			X_u = \sum_{i=1}^{m^p} Y_{u,i} \1{ Z_u = s_i}, \label{E:RFDiscrete} \tag{A8}
\end{align}
where the $Y_{u,i}$ are independent and uniformly distributed on $A_i$ for $u\in\Z^d$ and $1\le i\le m^d$. Then if $B\subseteq A_i$, $\p( X_u \in B ) = (|B| / |A_i|) \, (\alpha_i |A_i|) = \alpha_i |B|$. In particular, each $X_u$ has a density $\kappa$ on $[0,1]^p$. Also all other properties from the time series case are inherited. So, we have once more an invariance property as in \eqref{E:InvarianceOnCells}: For $N\in\N^d$ and $u,z_1,z_2\le N$ such that $z_2 <_d u <_d z_1$
\begin{align*}
			&\p\big(	(X_v: u\le_d v\le_d z_1, v\le N)\in A \,|\, X_w = x_w: z_2\le_d w <_d u, w\le N	\big) \\
			&= \p\big(	(X_v: u\le_d v\le_d z_1, v\le N)\in A \,|\, X_w = y_w: z_2\le_d w <_d u, w\le N	\big),
\end{align*}
for all $x_w,y_w\in A_{i_w}$, $1\le i_w\le m^p$, for all $w$ such that $z_2\le_d w <_d u$ and $w\le N	$.

Consequently, we obtain the following generalized variant of Theorem~\ref{T:ConvergenceExpectationDiscrete}.
\begin{theorem}\label{T:ConvergenceExpectationDiscreteRandomField}
Let $X=( X_u: u\in\Z^d )$ be a $[0,1]^p$-valued random field on $\Z^d$, which admits a Marton coupling that satisfies \eqref{C:MixingMatrix2} w.r.t.\ $>_d$ for each $N\in\N^d$. Each $X_u$ has a marginal density $\kappa$ as in \eqref{E:RFDiscrete} such that $0<\inf \kappa \le \sup \kappa < \infty$. Then for each $0\le q\le p-1$
\begin{align}
			\lim_{N\rightarrow \infty} \pi(N)^{-1} \E{ \beta^{r,s}_q( \cK( \pi(N)^{1/p} \mX_N) ) } &= \E{ \hat{b}_q(\kappa(X_u)^{1/p}(r,s)) }, \label{E:ConvergenceExpectationDiscreteRandomField0a}\\
					\lim_{N\rightarrow \infty}  \pi(N)^{-1} \beta^{r,s}_q( \cK( \pi(N)^{1/p} \mX_N) )  &= \E{ \hat{b}_q(\kappa(X_u)^{1/p}(r,s)) } \quad  a.s. \label{E:ConvergenceExpectationDiscreteRandomField0b}
			\end{align}
\end{theorem}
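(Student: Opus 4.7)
The plan is to mirror the proof of Theorem~\ref{T:ConvergenceExpectationDiscrete} and transfer it to the lattice $\Z^d$ under the total ordering $>_d$. The almost-sure statement \eqref{E:ConvergenceExpectationDiscreteRandomField0b} will follow from \eqref{E:ConvergenceExpectationDiscreteRandomField0a} once one establishes the random-field analogue of Theorem~\ref{T:ExponentialInequality}: the proof there is an induction along a total ordering of the index set that uses only \eqref{C:RegularityDensity}, the Marton coupling, and the covering condition, and the same argument runs verbatim by enumerating $\{u\le N\}$ according to $>_d$, since \eqref{C:MixingMatrix2} provides the uniform row-sum bound the induction needs. A Borel--Cantelli argument (note that the exponential decay in $\pi(N)$ is summable along any admissible sequence $N\to\infty$ because of \eqref{E:MinMaxQuot}) then upgrades \eqref{E:ConvergenceExpectationDiscreteRandomField0a} to \eqref{E:ConvergenceExpectationDiscreteRandomField0b}.

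For \eqref{E:ConvergenceExpectationDiscreteRandomField0a} write $n=\pi(N)$ and decompose $[0,1]^p=\bigsqcup_{i=1}^{m^p}A_i$ with $\mY_{N,i}=\mX_N\cap A_i$ and $N_i=\#\mY_{N,i}=\#\{u\le N:Z_u=s_i\}$. By the Geometric Lemma (Lemma~\ref{L:GeometricLemma}) that underlies the local exchange-one bound \eqref{E:ExchOneCost}, the persistent Betti function is nearly additive across the cells:
\[
\Big| \beta_q^{r,s}(\cK(n^{1/p}\mX_N)) - \sum_{i=1}^{m^p}\beta_q^{r,s}(\cK(n^{1/p}\mY_{N,i})) \Big| \le c\,\#\big\{u\le N: X_u\in (\partial A_i)^{(C n^{-1/p} s)}\text{ for some }i\big\}.
\]
Since $\kappa\le f^*$, the expected right-hand side is $O(n^{(p-1)/p})=o(n)$. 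By construction~\eqref{E:RFDiscrete}, conditionally on $Z$ the sample $\mY_{N,i}$ consists of $N_i$ i.i.d.\ uniform points on $A_i$; translating $A_i$ to the origin, applying the combinatorial isomorphism $K(\eta\mX,r)\cong K(\mX,\eta^{-1}r)$ and writing $A_i=w_i+|A_i|^{1/p}[0,1]^p$ gives
\[
\beta_q^{r,s}(\cK(n^{1/p}\mY_{N,i})) \stackrel{d}{=} \beta_q^{\alpha_i^{1/p} r(n|A_i|/N_i)^{1/p},\, \alpha_i^{1/p} s(n|A_i|/N_i)^{1/p}}(\cK(N_i^{1/p}\mZ_{N_i}^*)),
\]
with $\mZ_{N_i}^*$ an $N_i$-binomial process on $[0,1]^p$ of unit density. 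The classical thermodynamic-regime result for binomial processes (the limit defining $\hat b_q$) together with continuity of $\hat b_q$ in the filtration parameters then gives $N_i^{-1}\E{\beta_q^{\cdot,\cdot}(\cK(N_i^{1/p}\mZ_{N_i}^*))}\to \hat b_q(\alpha_i^{1/p}(r,s))$ whenever $N_i\to\infty$.

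The final ingredient is a law of large numbers for the cell counts: $N_i/\pi(N)\to \alpha_i|A_i|$ both almost surely and in $L^1$, as $N\to\infty$ subject to \eqref{E:MinMaxQuot}. This follows from stationarity of $Z$ together with \eqref{C:MixingMatrix2}, since $\|\Gamma^{(\infty)}\|_\infty<\infty$ provides a summable coupling bound which controls the covariances $\cov{\1{Z_u=s_i}}{\1{Z_v=s_i}}$ uniformly in a way that implies a standard multi-index LLN for the bounded field $(\1{Z_u=s_i})_{u\in\Z^d}$. Combining this with the universal bound \eqref{E:UniversalBound} (so that dominated convergence applies after dividing by $n$) yields $n^{-1}\E{\beta_q^{r,s}(\cK(n^{1/p}\mY_{N,i}))}\to \alpha_i|A_i|\,\hat b_q(\alpha_i^{1/p}(r,s))$. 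Summing over $i$ and using $\E{\hat b_q(\kappa(X_u)^{1/p}(r,s))}=\sum_i \alpha_i|A_i|\,\hat b_q(\alpha_i^{1/p}(r,s))$ produces \eqref{E:ConvergenceExpectationDiscreteRandomField0a}. The hardest technical step, and the one I expect to require most care, is the two-way passage between the ordering $>_d$ needed for the Marton-coupling machinery (exponential inequality, ergodic theorem) and the geometric decomposition into the cells $A_i$ which is naturally indifferent to that ordering; the bookkeeping needed to show that the boundary contributions and count fluctuations remain of order $o(n)$ uniformly along admissible sequences $N\to\infty$ is the main obstacle.
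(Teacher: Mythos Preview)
The paper's proof of this theorem is a single sentence: ``The statement follows immediately from Theorem~\ref{T:ConvergenceExpectationDiscrete}.'' The point is that the proof of Theorem~\ref{T:ConvergenceExpectationDiscrete} never uses the one-dimensional index set in any essential way; it only uses (i) a total ordering of the sample for the Marton-coupling / McDiarmid step and the success-run decomposition, (ii) the blocked structure \eqref{E:BlockedDensity2} giving conditional i.i.d.\ uniformity within cells, and (iii) identical marginals. Re-indexing $\{u\le N\}$ as $\{1,\ldots,\pi(N)\}$ via $>_d$ reproduces exactly that setting: \eqref{C:MixingMatrix2} is the analogue of \eqref{C:MixingMatrix}, \eqref{E:RFDiscrete} is the analogue of \eqref{E:BlockedDensity2}, and stationarity on $\Z^d$ gives common marginals. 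So Theorem~\ref{T:ConvergenceExpectationDiscrete} applies verbatim to the re-indexed sequence.

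Your approach is not wrong --- you are essentially rewriting the proof of Theorem~\ref{T:ConvergenceExpectationDiscrete} in random-field notation rather than reducing to it --- and the ingredients you list (near-additivity via the Geometric Lemma, conditional i.i.d.\ structure on cells, LLN for the cell counts $N_i$, the random-field exponential inequality) are exactly the right ones. What the reduction buys is that the ``hardest technical step'' you flag, reconciling the ordering $>_d$ with the geometric cell decomposition, disappears: the cell decomposition in the proof of Theorem~\ref{T:ConvergenceExpectationDiscrete} is already order-agnostic, and the ordering enters only through McDiarmid and the success-run bookkeeping, both of which carry over under any total ordering equipped with a Marton coupling.

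One small correction in your sketch: the cells $A_i$ in \eqref{E:BlockedDensity} are rectangular boxes with edge lengths $\ell_{i,1},\ldots,\ell_{i,p}$, not cubes, so the identification $A_i=w_i+|A_i|^{1/p}[0,1]^p$ is not valid in general. The paper handles this in the proof of Theorem~\ref{T:ConvergenceExpectationDiscrete} with the diagonal matrix $M_i=\operatorname{diag}(\ell_{i,1},\ldots,\ell_{i,p})$ and a dilatation argument citing \cite{divol2018persistence}; your rescaling step would need the same adjustment.
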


We refer to \cite{chazottes2007concentration, kulske2003concentration} who consider couplings for high-temperature Gibbs measures for the discrete random field $Z$ whose components take the values in $\{-,+\}$. Given certain upper bounds on the dependence within the random field, they obtain for the two state Gibbs model a coupling $(Z,Z')$ which satisfies
\begin{align}\label{E:ExpDecayDependenceMRF}
				\p( Z_v \neq Z'_v | Z_u = +, Z'_u = -, Z_w = Z'_w, \; \forall w <_d u ) \le e^{ - C \|x-y\|_1 },
\end{align}
for a certain constant $C\in\R_+$. So the probability of an unsuccessful coupling decays exponentially fast in the $\ell^1$-distance on the lattice, which is the minimal number of edges between $x$ and $y$ w.r.t.\ the standard $2^d$-neighborhood structure. In particular, the Marton coupling $(Z,Z')$ satisfies \eqref{C:MixingMatrix2}.

For a generalization of Theorem~\ref{T:ConvergenceExpectationContinuous} we need a decay assumption on the mixing matrices. In the case of a Markov chain of finite order, Theorem~16.0.2 in \cite{meyn2012markov} states that strictly positive and continuous conditional densities ensure uniform geometric ergodicity. So concerning the Marton coupling, we obtain a mixing matrix whose entries in one line decay at an exponential rate.

For random fields the situation is far more complex. To this end, we restrict ourselves to stationary Markov random fields $X$ of order 1 w.r.t.\ the $2^d$ neighborhood structure of the regular lattice $\Z^d$ whose joint distribution can be described with $2^d$ (conditional) density functions 
\begin{align}
			f_{s}\colon [0,1]^p\times [0,1]^{\|s\|_1 p} \rightarrow (0,\infty), \quad s\in\{0,1\}^d, \text{ where } \kappa = f_{0}\colon [0,1]^p\to (0,\infty). \label{E:ConditionalDensities} \tag{A9}
\end{align}
is the marginal density. More precisely, the distribution can be modeled with a scheme as in Figure~\ref{fig:FactorizationScheme}, however, on a $d$-dimensional lattice. The conditional density $f_s$ describes the transition within the set $\{z\in\Z^d: z_j=0 \text{ for } j\in J(s)\}$, where $J(s) = \{1\le j\le d: s_j=0\}$. 

We give an example for a cube $\cC_N =\{u\in\N ^d: u\le N\}$. First we can simulate the random variable $X_{(1,\ldots,1)}$ in the lower left corner according to $\kappa=f_0$. Let $e_k$ be the standard basis elements of $\R^d$ for $k=1,\ldots,d$, i.e., the vector whose $k$th entry is 1 and 0 otherwise. Then the conditional densities $f_{e_k}$ describe the transition on the coordinate axes of the cube. Similarly, with the remaining functions $f_s$, $s\neq (1,\ldots,1)$, we can completely simulate the transition on the lower envelope of the cube, i.e., the locations which are zero in at least one coordinate. Finally, the conditional density $f_{(1,\ldots,1)}$ describes the transition to those locations $u$, which are nonzero each entry. 

It is an important fact that due to the Markov structure we can factorize the distribution of the random field on $\cC_N$ with these conditional densities and use the ordering $>_d$ in the same time.

In contrast to the one-dimensional situation of a Markov chain, it is this time not enough that the conditional densities from \eqref{E:ConditionalDensities} are strictly positive in order to ensure a successful Marton coupling. To this end, we assume that the dependence within $X=\{X_u: u\in\Z^d\}$ decays at a polynomial rate in the sense that
\begin{align}
			\sum_{k=0}^\infty k^{\delta} \max_{v: \|v-u\|_{\max} = k } \Gamma^{(\infty)}_{u,v} < \infty, \label{E:DecayDependenceMRF} \tag{A10}
\end{align}
where $\delta > 3(d-1)$, $\|u\|_{\max} = \max\{ |u_i|: i=1,\ldots,d \}$ and where $\Gamma^{(\infty)}$ is the mixing matrix of the entire random field. Note that a uniform exponential decay as in \eqref{E:ExpDecayDependenceMRF} is obviously sufficient for \eqref{E:DecayDependenceMRF}. 
Note that due to the factorization property of $X$ from \eqref{E:ConditionalDensities}, the mixing matrix at position $(u,v)$ is nontrivial if and only if $v\ge u$. Also due to stationarity, it is entirely determined by the entries $\Gamma^{(\infty)}_{(1,\ldots,1),v}$, $v\ge (1,\ldots,1)$. Using last condition on the decay, we conclude with a generalized convergence result for persistent Betti numbers obtained from Markov random fields.

\begin{theorem}\label{T:ConvergenceExpectationMarkovRandomField}
Let the stationary random field $X$ be given by the (conditional) density functions $f_s$, $s\in\{0,1\}^d$, from \eqref{E:ConditionalDensities}, which are all continuous. Each $f_s$ is strictly positive on $[0,1]^p\times [0,1]^{\|s\|_1 p}$ in that $\inf_{x,y} f_s(x|y) > 0$. Let the mixing matrix of $X$ satisfy \eqref{E:DecayDependenceMRF}. Then $X$ fulfills the convergence results from \eqref{E:ConvergenceExpectationDiscreteRandomField0a} and \eqref{E:ConvergenceExpectationDiscreteRandomField0b}.
\end{theorem}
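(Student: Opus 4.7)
The plan is to mirror the proof of Theorem~\ref{T:ConvergenceExpectationContinuous} for time series, transferring it to the spatial setting via Theorem~\ref{T:ConvergenceExpectationDiscreteRandomField}. First I would approximate the Markov random field $X$ by a sequence of blocked-density Markov random fields $\wt X^{(\epsilon)}$, $\epsilon\to 0$: partition $[0,1]^p$ into a regular grid of side length $\epsilon$ and replace each conditional density $f_s$ (as well as the marginal $\kappa$) by a blocked counterpart $f_{s,\epsilon}$ that is piecewise constant on products of grid cells. Continuity of $f_s$ on the compact cube $[0,1]^p\times [0,1]^{\|s\|_1 p}$ gives $\|f_s-f_{s,\epsilon}\|_\infty\to 0$, and strict positivity ensures that $f_{s,\epsilon}$ remains a valid conditional density. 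The factorization scheme of Figure~\ref{fig:FactorizationScheme} then defines $\wt X^{(\epsilon)}$ consistently with the $>_d$-ordering, with marginal density $\kappa_\epsilon$.

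Second, I would construct a simultaneous Marton coupling between $X$ and $\wt X^{(\epsilon)}$ by applying Goldstein's maximal coupling site by site along $>_d$, using the conditional densities $f_s$ versus $f_{s,\epsilon}$. Strict positivity and continuity of the $f_s$ yield a total-variation distance between the two transition kernels at each site that is uniformly $o_\epsilon(1)$. Propagating this along the field and combining with the assumed polynomial decay \eqref{E:DecayDependenceMRF}, I would show that $\wt X^{(\epsilon)}$ admits a Marton coupling whose mixing matrix satisfies \eqref{C:MixingMatrix2}, so that Theorem~\ref{T:ConvergenceExpectationDiscreteRandomField} applies and yields
\begin{align*}
\lim_{N\to\infty}\pi(N)^{-1}\E{\beta^{r,s}_q(\cK(\pi(N)^{1/p}\wt{\mX}^{(\epsilon)}_N))}=\E{\hat b_q(\kappa_\epsilon(\wt X^{(\epsilon)}_u)^{1/p}(r,s))},
\end{align*}
together with the corresponding almost-sure statement.

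Third, I would quantify the effect of replacing $X$ by $\wt X^{(\epsilon)}$ on the persistent Betti number. The local exchange bound from the Geometric Lemma~\ref{L:GeometricLemma}, already used for Theorem~\ref{T:ExponentialInequality}, shows that swapping one point in the cloud affects $\beta_q^{r,s}$ by at most a power of the local point count. Combining this with the per-site TV estimate $o_\epsilon(1)$ and summing over $u\le N$ yields
\begin{align*}
\limsup_{N\to\infty}\pi(N)^{-1}\bigl|\E{\beta^{r,s}_q(\cK(\pi(N)^{1/p}\mX_N))}-\E{\beta^{r,s}_q(\cK(\pi(N)^{1/p}\wt{\mX}^{(\epsilon)}_N))}\bigr|\le \omega(\epsilon),
\end{align*}
with $\omega(\epsilon)\to 0$. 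Letting $\epsilon\to 0$ and invoking the continuity~\eqref{E:LimitContinuity} of the limit $\kappa_\epsilon\mapsto \E{\hat b_q(\kappa_\epsilon(\cdot)^{1/p}(r,s))}$ gives \eqref{E:ConvergenceExpectationDiscreteRandomField0a}. For the almost-sure statement \eqref{E:ConvergenceExpectationDiscreteRandomField0b}, I would combine the discrete a.s.\ convergence from Theorem~\ref{T:ConvergenceExpectationDiscreteRandomField} with the spatial analogue of the concentration inequality in Theorem~\ref{T:ExponentialInequality}, which follows from the same abstract scheme once \eqref{C:MixingMatrix2} is in place; condition~\eqref{E:MinMaxQuot} ensures that $\pi(N)$ grows polynomially in $\max_i N_i$, enabling a Borel--Cantelli argument along a lattice sequence.

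The hard part is showing that the polynomial decay \eqref{E:DecayDependenceMRF} is preserved (up to a constant) under the approximation $X\leadsto \wt X^{(\epsilon)}$ and in particular that $\wt X^{(\epsilon)}$ satisfies the $\ell^\infty$-summability in \eqref{C:MixingMatrix2}. The exponent $\delta>3(d-1)$ is essential here: the shell $\{v:\|v-u\|_{\max}=k\}$ contains $\Theta(k^{d-1})$ lattice points, and because of the two-dimensional error propagation across the $>_d$-ordering of the Markov random field (first along the axes, then across the interior, as in Figure~\ref{fig:FactorizationScheme}) together with the summation defining the $\ell^\infty$-row sum, one pays extra factors of $k^{d-1}$ at several stages. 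The margin $\delta-3(d-1)>0$ precisely leaves a summable tail so that all induced mixing sums remain finite uniformly in $\epsilon$.
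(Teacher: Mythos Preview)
Your proposal follows essentially the same strategy as the paper's proof: approximate $X$ by a blocked-density field $\wt X^{(\epsilon)}$, apply Theorem~\ref{T:ConvergenceExpectationDiscreteRandomField}, and control the approximation error via a site-by-site coupling along the $>_d$-ordering. The telescoping over sites $u\le N$, the use of the Geometric Lemma, and the invocation of \eqref{E:LimitContinuity} all appear in the paper's argument as well.

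However, you have misidentified where the exponent condition $\delta>3(d-1)$ is actually needed. It is \emph{not} primarily used to show that \eqref{E:DecayDependenceMRF} transfers to $\wt X^{(\epsilon)}$ or that $\wt X^{(\epsilon)}$ satisfies \eqref{C:MixingMatrix2}; the paper simply asserts the latter from the blocked structure and \eqref{E:DecayDependenceMRF} without further argument, and only the weaker summability \eqref{C:MixingMatrix2} is required for Theorem~\ref{T:ConvergenceExpectationDiscreteRandomField}. The crucial use of $\delta>3(d-1)$ is in the error analysis between $X$ and $\wt X^{(\epsilon)}$ itself. In the paper's telescoping expansion, each site $u$ carries a coupling $(Z'_{u,\cdot},\wt Z_{u,\cdot})$ that agrees for $v<_d u$, may differ at $u$ with probability $\lesssim\epsilon$, and is thereafter re-coupled via the Marton coupling of $X$. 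One introduces a spatial coupling time $\tau_c(u)=\inf\{k:\ Z'_{u,v}=\wt Z_{u,v}\text{ on the shell }\|v-u\|_{\max}=k,\ v\ge u\}$; the Betti-number difference at site $u$ then involves swapping $O(\tau_c(u)^d)$ points. Three factors of shell size arise: (i) a factor $k^{d-1}$ in the union bound for $\p(\tau_c(u)>k\mid\cdot)\le c_d k^{d-1}\max_{\|v-u\|_{\max}=k}\Gamma^{(\infty)}_{u,v}$; (ii) a factor $k^d$ from the swap count; and (iii) another $k^{d-1}$ when summing the residual tail term over $u$ with $\|N-u\|_{\max}=k$. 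After a H\"older step with conjugate exponent $\ol w$, these combine to require moments of order roughly $3(d-1)$ of the coupling time, which is exactly what $\delta>3(d-1)$ in \eqref{E:DecayDependenceMRF} delivers. Your intuition about ``extra factors of $k^{d-1}$ at several stages'' is right, but the stages are the coupling-time tail, the swap count, and the site average---not the preservation of \eqref{E:DecayDependenceMRF} for the approximating field.
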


\section{Technical results}\label{Sec_TechnicalResults}

\subsection{Helpful tools}
Before we come to the proofs of the central results, we start with some auxiliary results.
\begin{lemma}[Concentration inequality for bounded transition kernels]\label{ConcentrationBoundedTransitionKernels}
Let $Z=\{Z_i:i\in\N\}$ be a sequence whose components $Z_i$ take values in the measure space $(S,\fS,\mu)$. Moreover assume that the conditional distributions $\cL(Z_i|Z_j=z_j, 1\le j<i)$ admit a conditional density $f_i$. These densities are uniformly bounded in the sense that the first part of \eqref{C:RegularityDensity} holds, i.e.,
$$
			\sup_{i\in\N} \| f_i(\cdot | \cdot ) \|_{\mu \otimes \mu^{\otimes (i-1)}, \infty}\le f^* < \infty.
$$
Let $(B_n:n\in\N) \subseteq S $ be a sequence of measurable sets such that $\limsup_{n\rightarrow \infty }n \mu(B_n) (\log n)^{-\alpha} \le c^*$, for certain $\alpha,c^*\in\R_+$. Then there is a constant $c_1 \in\R_+$ such that for all $n\in\N$ and $t\in\R_+$
\begin{align}\begin{split}\label{Eq:ConcentrationBoundedTransitionKernels}
		\p\Big(	\sum_{i=1}^{n} \1{Z_i \in B_n } > t	\Big) &\le e^{-t} \ \E{ e^{	\sum_{i=1}^{n} \1{Z_i \in B_n }} } \\
		&\le \exp(-t  + (e-1) f^* n \mu(B_n) ) \\
		&\le c_1 \, \exp(-t + c^*(e-1)f^* (\log n)^\alpha ).
\end{split}\end{align}
In particular, let $Z$ be an $\R^p$-valued homogenous Markov chain which admits uniformly bounded conditional densities. For each $n$, let $B_n = B(x,r_n)$ be the $r_n$-neighborhood of a point $x\in\R^p$ w.r.t.\ the Euclidean distance such that $n r_n^p \rightarrow c^*$. Then \eqref{Eq:ConcentrationBoundedTransitionKernels} holds with $\alpha = 0$.
\end{lemma}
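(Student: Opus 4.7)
The plan is to deduce all three bounds from a Chernoff-type argument combined with iterated conditioning; the mechanism is standard once one uses the uniform density bound. Set $S_n \coloneqq \sum_{i=1}^n \1{Z_i \in B_n}$ and $\cF_i \coloneqq \sigma(Z_1,\ldots,Z_i)$. The first inequality is nothing but the exponential Markov inequality applied with parameter $\lambda = 1$: since $S_n\ge 0$ and the event $\{S_n>t\}$ is contained in $\{e^{S_n}>e^{t}\}$, one has $\p(S_n>t)\le e^{-t}\,\E{e^{S_n}}$.

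For the second inequality I would exploit the $\{0,1\}$-valued nature of each summand through the identity $e^{\1{A}}=1+(e-1)\1{A}$. Conditioning on $\cF_{n-1}$ and using the uniform density bound,
\[
\E{e^{\1{Z_n \in B_n}} \,\big|\, \cF_{n-1}} = 1 + (e-1)\,\p(Z_n \in B_n \,|\, \cF_{n-1}) \le 1 + (e-1)\,f^* \mu(B_n),
\]
where $\p(Z_n \in B_n \,|\, \cF_{n-1}) = \int_{B_n} f_n(z\,|\,Z_1,\ldots,Z_{n-1}) \,\mu(\intd z) \le f^* \mu(B_n)$ by assumption \eqref{C:RegularityDensity}. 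Applying the elementary bound $1+x\le e^x$ and then peeling off one factor at a time via the tower property gives
\[
\E{e^{S_n}} = \E{e^{S_{n-1}}\,\E{e^{\1{Z_n \in B_n}}\,\big|\,\cF_{n-1}}} \le e^{(e-1) f^* \mu(B_n)}\,\E{e^{S_{n-1}}} \le \ldots \le \exp\bigl(n(e-1) f^* \mu(B_n)\bigr),
\]
which is exactly the second inequality after multiplying by $e^{-t}$.

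The third inequality is a purely arithmetic consequence of the hypothesis $\limsup_{n}n\mu(B_n)(\log n)^{-\alpha}\le c^*$: for all $n\ge n_0$ this gives $n\mu(B_n)\le c^*(\log n)^{\alpha}$ (allowing an arbitrarily small slack absorbed into $c^*$), while the finitely many remaining $n<n_0$ contribute only a bounded multiplicative factor that I would absorb into the constant $c_1$. Finally, the Markov chain specialization is immediate: with $B_n = B(x,r_n)$ and $\mu$ the Lebesgue measure one has $\mu(B_n)\propto r_n^p$, so $nr_n^p\to c^*$ forces $n\mu(B_n)$ bounded, which is precisely the case $\alpha=0$. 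There is no substantive obstacle here; the entire argument is a textbook Chernoff bound, and the only nontrivial ingredient is the uniform density bound from \eqref{C:RegularityDensity}, which converts the martingale-type conditional probability into a deterministic quantity.
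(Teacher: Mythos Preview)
Your proof is correct and follows essentially the same approach as the paper's own proof: both use the Chernoff--Markov inequality with parameter $1$, bound the conditional Laplace transform via $e^{\1{A}}=1+(e-1)\1{A}$ and the uniform density bound, iterate using the tower property, and conclude from the $\limsup$ hypothesis. Your write-up is in fact slightly more explicit than the paper's, but there is no substantive difference in method.
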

\begin{proof}
First, we bound the Laplace transform of $ \1{Z_i \in B_n}$ w.r.t.\ $\cF_{i-1}$, where $\{\cF_i \}_i$ is the natural filtration of the process $Z$ with $\cF_0$ being trivial. We have
\begin{align*}
		\E{ \exp\left(	\1{Z_i \in B_n}	\right) | \cF_{i-1} } &\le e  \, \p(Z_i \in B_n| Z_1,\ldots,Z_{i-1} ) + \left[ 1 - \p(Z_i \in B_n| Z_1,\ldots,Z_{i-1} )  \right] \\
		&\le 1 + (e-1) f^* \mu(B_n) \le \exp\left( (e-1) f^* \mu(B_n)  \right).
\end{align*}
Thus, we obtain for the entire process $	\p(	\sum_{i=1}^n \1{Z_i \in B_n}  > t	) \le \exp(-t  + (e-1) f^* n \mu(B_n) )$, using Markov's inequality. This finishes the proof because $\limsup_{n\rightarrow \infty }n \mu(B_n) (\log n)^{-\alpha} \le c^*$.
\end{proof}

The next lemma is a generalization of Lemma~3.1 in \cite{yogeshwaran2017random}.
\begin{lemma}\label{L:SimplicesAndMeasure}
Let $j\in\N$ and $X=\{ X_t: t\in\Z \}$ be a process which takes values in a measure space $(S,\fS,\mu)$ with a non-atomic measure $\mu$. Let $\{v_1,\ldots,v_\ell\}$ be a set of $\ell\le j$ distinct natural numbers. Assume that the distribution of the vector $(X_{v_1},\ldots,X_{v_{\ell}})$, when conditioned on another observation $X_i$, $i\notin \{v_1,\ldots,v_l\}$, admits a density $f_{(X_{v_1},\ldots,X_{v_{\ell}}) | X_i }$ w.r.t.\ $\mu^{\otimes\ell}$. Assume that these densities are uniformly essentially bounded in the sense that the second part of \eqref{C:RegularityDensity} holds, i.e.,
\[
		 \sup_{i\in \Z} \sup_{ \substack{ (v_1,\ldots,v_{\ell} ) \in \Z^{\ell} \\ i\neq v_{j} \neq v_k } } \| f_{(X_{v_1},\ldots,X_{v_{\ell}}) | X_i } \|_{ \mu^{\otimes(\ell+1)}, \infty } \le f^* < \infty, \quad \forall\, \ell \le j.
\]
Then for each $A\in\fS$ and for each $r>0$ it is true that
\begin{align}\label{E:SimplicesAndMeasure0}
			\E{ K_j( (\eta_n \mX_n )\cap A, r ) } \le \E{	K_j ( \eta_n \mX_n, r; A ) } &\le (f^*)^2 	n^{j+1} \, \mu(\eta_n^{-1} A) \, \sup_{x\in S} \mu(B(x,2 \eta_n^{-1} r))^{j} \\
		\text{ and }	\quad \E{ K_j( \eta_n \mX_n , r; \eta_n X_t ) } &\le f^* \	n^{j} \, \sup_{x\in S} \mu(B(x,2 \eta_n^{-1} r))^{j}.\label{E:SimplicesAndMeasure0b}
\end{align}
In particular, if $S$ is a subset of $\R^p$, $\eta_n = n^{1/p}$ and $\mu$ equals the Lebesgue measure, then 
\eqref{E:SimplicesAndMeasure0} is of constant order $O( |A| r^{p j} )$ and \eqref{E:SimplicesAndMeasure0} is of order $O( r^{p j} )$ .
\end{lemma}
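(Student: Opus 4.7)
The plan is to reduce everything to the unscaled point cloud and then apply a standard overcounting together with the hypothesized density bounds. By the homogeneity of the metric, the complexes $K(\eta_n \mX_n, r)$ and $K(\mX_n, \eta_n^{-1} r)$ are combinatorially isomorphic, so $K_j(\eta_n\mX_n, r; A) = K_j(\mX_n, \eta_n^{-1} r; \eta_n^{-1} A)$. For both the \v Cech and the Vietoris--Rips complex, a $j$-simplex $\{x_{i_0}, \ldots, x_{i_j}\}$ at filtration value $\rho$ forces all the other vertices to lie in $B(x_{i_0}, 2\rho)$ (for Rips this even holds with $\rho$ in place of $2\rho$, for \v Cech it is the $2\rho$ triangle-inequality bound).

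The first claim $\E{K_j((\eta_n \mX_n)\cap A, r)} \le \E{K_j(\eta_n \mX_n, r; A)}$ is immediate because every simplex with all vertices in $A$ has a vertex in $A$. For the upper bound, I overcount via ordered tuples: setting $\rho = \eta_n^{-1} r$ and $B = \eta_n^{-1} A$,
\[
K_j(\mX_n, \rho; B) \le \frac{1}{j!} \sum_{(i_0, i_1, \ldots, i_j) \text{ distinct}} \mathbf{1}\{X_{i_0} \in B\} \prod_{k=1}^{j} \mathbf{1}\{X_{i_k} \in B(X_{i_0}, 2\rho)\},
\]
the factor $1/j!$ arising because each simplex is counted at least $j!$ times through the orderings of its non-distinguished vertices. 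Taking expectations, I condition on $X_{i_0}$: by the hypothesis on $f_{X_{v_1},\ldots,X_{v_j}\mid X_{i_0}}$,
\[
\E{\prod_{k=1}^{j} \mathbf{1}\{X_{i_k} \in B(X_{i_0}, 2\rho)\} \Big| X_{i_0}} \le f^* \,\mu(B(X_{i_0}, 2\rho))^{j} \le f^* \sup_{x \in S} \mu(B(x, 2\rho))^{j}.
\]
Since this bound is uniform in $X_{i_0}$, I then use $\p(X_{i_0} \in B) \le f^* \mu(B) = f^* \mu(\eta_n^{-1} A)$ from the marginal density bound, and the combinatorial factor $n(n-1)\cdots(n-j)/j! \le n^{j+1}$, to conclude the $(f^*)^2 n^{j+1}$ bound.

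For the second inequality, the argument simplifies because I am already conditioning on $X_t$ being a vertex: I bound
\[
K_j(\mX_n, \eta_n^{-1} r; X_t) \le \frac{1}{j!} \sum_{(i_1, \ldots, i_j) \text{ distinct}, \neq t} \prod_{k=1}^{j} \mathbf{1}\{X_{i_k} \in B(X_t, 2\eta_n^{-1} r)\},
\]
take the conditional expectation given $X_t$, invoke the density bound once to obtain $f^* \sup_x \mu(B(x, 2\eta_n^{-1} r))^{j}$, and finally use $(n-1)\cdots(n-j)/j! \le n^{j}$. Only a single factor of $f^*$ appears because the law of $X_t$ itself is not integrated against its marginal density here. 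The final order statements follow by plugging in $\mu(\eta_n^{-1} A) = n^{-1}|A|$ and $\mu(B(x, 2\eta_n^{-1} r)) = O(n^{-1} r^p)$ for $\eta_n = n^{1/p}$ in Euclidean space.

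There is no substantial obstacle here; the only mild subtlety is keeping the bookkeeping of the overcounting factor $j!$ consistent and justifying the use of the conditional density bound for the specific tuple $(X_{i_1}, \ldots, X_{i_j})$ given $X_{i_0}$ (respectively given $X_t$), which is exactly what hypothesis~\eqref{C:RegularityDensity} provides.
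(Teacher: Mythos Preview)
Your argument is correct and follows essentially the same route as the paper: bound the simplex count by a sum over ordered tuples, condition on the distinguished vertex, apply the conditional density bound for the remaining $j$ vertices, and use the marginal density bound for the vertex in $A$. The only cosmetic difference is that you track the $1/j!$ overcounting factor explicitly (and then discard it via $n(n-1)\cdots(n-j)/j!\le n^{j+1}$), whereas the paper simply sums over ordered tuples without this factor and bounds their number by $n^{j+1}$ directly.
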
	
\begin{proof}
We only prove the statement in \eqref{E:SimplicesAndMeasure0}, the statement in \eqref{E:SimplicesAndMeasure0b} follows in the same fashion. The first inequality in \eqref{E:SimplicesAndMeasure0} is obvious. Thus, we only show the second one. Observe that
\begin{align}\label{E:SimplicesAndMeasure1}
		K_j ( \eta_n \mX_n, r; A ) \le \sum_{i=1}^n \1{ \eta_n X_i \in A } \sum_{ \substack{ (u_1,\ldots,u_{j}) \in \{1,\ldots, n\}^{j}:\\ i \neq u_\ell \neq u_{\ell'} } } \; \prod_{\ell=1}^{j} \1{ d(X_i,X_{u_\ell} ) \le 2 \eta_n^{-1} r }
\end{align}
because the distance between any two points in a $j$-simplex in the {\v C}ech or the Vietoris-Rips complex $K_j ( \eta_n \mX_n, r; A )$ is at most $2r$. On the one hand, 
\begin{align}\label{E:SimplicesAndMeasure2}
		\E{	\prod_{\ell=1}^{j} \1{d(X_i,X_{u_\ell}) \le 2 \eta_n^{-1} r } \Big| X_i	} \le f^* \ \sup_{x\in S} \mu(B(x,2 \eta_n^{-1} r))^{j},
\end{align}
and also
$
	\# \{	(u_1,\ldots,u_{j}) \in \{1,\ldots, n\}^{j}: i \neq u_\ell \neq u_{\ell'} \neq i, \: 1\le \ell, \ell' \le j 	\} \le n^{j}.
	$
On the other hand
\begin{align}\label{E:SimplicesAndMeasure3}
		\E{	\1{ \eta_n X_i \in A } } \le f^* \mu\big(\eta_n^{-1}	A	\big).
 \end{align}
Combining \eqref{E:SimplicesAndMeasure2}, \eqref{E:SimplicesAndMeasure3} with \eqref{E:SimplicesAndMeasure1} yields the conclusion.
\end{proof}

The following result is well-known to topologists.
\begin{lemma}[Geometric Lemma, Lemma 2.11 in \cite{hiraoka2018limit}] \label{L:GeometricLemma}
Let $\mX \subseteq \mY$ be two finite point sets in $S$. Then
$
		|\beta^{r,s}_q (\cK(\mY)) - \beta^{r,s}_q (\cK(\mX))	| \le \sum_{j=q}^{q+1}  |K_j(\mY,s) \setminus K_j(\mX,s)|.
$
\end{lemma}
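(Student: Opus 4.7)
The plan is to insert the simplices of $\cK(\mY) \setminus \cK(\mX)$ one at a time in a valid filtration order (non-decreasing filtration values, faces before cofaces), and show that each single insertion of a $j$-simplex changes $\beta_q^{r,s}$ by at most $1$, and only for $q \in \{j-1, j\}$. Since the only insertions that can affect $\beta_q^{r,s}$ are $q$- and $(q+1)$-simplices whose filtration value does not exceed $s$, summing these unit bounds across all such insertions yields precisely $\sum_{j=q}^{q+1}|K_j(\mY,s)\setminus K_j(\mX,s)|$.

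For the single-insertion step, I would rely on the defining formula $\beta_q^{r,s}(\cK) = \dim Z_q(K(r)) - \dim\bigl(B_q(K(s))\cap Z_q(K(r))\bigr)$ and split on the filtration value $t$ of the new $j$-simplex $\sigma$. If $t > s$, neither $K(r)$ nor $K(s)$ is modified and $\beta_q^{r,s}$ is unchanged. If $r < t \le s$, only $K(s)$ changes; $Z_q(K(r))$ is untouched, while adding $\sigma$ augments $\dim B_{j-1}(K(s))$ by $0$ or $1$ according to whether $\partial\sigma$ was already a boundary in $K(s)\setminus\{\sigma\}$, and the intersection with $Z_{j-1}(K(r))$ therefore changes by at most $1$ -- so only $\beta_{j-1}^{r,s}$ is affected. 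If $t \le r$, $\sigma$ enters both levels, and I would invoke the standard creator/destroyer dichotomy: inserting a $j$-simplex at a given filtration level increases $\dim Z_j$ by $1$ when $\partial\sigma$ is already a boundary there, and increases $\dim B_{j-1}$ by $1$ otherwise, with no other changes to chain/cycle/boundary dimensions. Applying this at $r$ and at $s$ produces four subcases; a direct case analysis shows that only $\beta_{j-1}^{r,s}$ and $\beta_j^{r,s}$ can move, each by at most $1$.

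The main obstacle is the bookkeeping in the case $t \le r$, where $\sigma$ can be a destroyer at level $r$ and simultaneously a creator at level $s$ (because the chain witnessing $\partial\sigma$ as a boundary may use a $j$-simplex with filtration value in $(r,s]$). In this mixed subcase, one has to check that the simultaneous perturbations of $Z_j(K(r))$ and $B_j(K(s))\cap Z_j(K(r))$ cancel exactly, so that $\beta_q^{r,s}$ is unchanged, while the ``pure creator at both levels'' and ``pure destroyer at both levels'' subcases contribute a unit change to $\beta_j^{r,s}$ and $\beta_{j-1}^{r,s}$ respectively. Once this dichotomy is laid out, the telescoping argument and the inclusion $K_j(\mY,r)\setminus K_j(\mX,r)\subseteq K_j(\mY,s)\setminus K_j(\mX,s)$ (which holds since $r\le s$ and membership in the \v Cech or Rips complex only depends on the simplex itself) deliver the claimed inequality.
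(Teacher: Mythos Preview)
The paper does not give its own proof of this lemma; it merely cites it as Lemma~2.11 of \cite{hiraoka2018limit} and uses it as a black box. Your approach---inserting the simplices of $K(\mY)\setminus K(\mX)$ one at a time in filtration order and invoking the creator/destroyer dichotomy at levels $r$ and $s$---is the standard one and is essentially how the result is established in the cited reference.

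One minor correction to your bookkeeping in the mixed subcase (destroyer at $r$, creator at $s$): there is in fact no cancellation to verify. Since $\sigma$ is a destroyer at level $r$, the space $Z_j(K(r))$ is unchanged (only $\dim B_{j-1}(K(r))$ goes up, and that term does not enter $\beta_{j-1}^{r,s}$). Since $\sigma$ is a creator at level $s$, the space $B_{j-1}(K(s))$ is unchanged (only $\dim Z_j(K(s))$ goes up, and that term does not enter $\beta_j^{r,s}$). Hence both $\beta_{j-1}^{r,s}$ and $\beta_j^{r,s}$ are trivially unaffected in this subcase---nothing competes, so nothing needs to cancel. The remaining subcases (creator at both levels, destroyer at both levels, and $r<t\le s$) behave exactly as you describe, and the telescoping sum then gives the claimed bound.
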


\subsection{Technical details on Section~\ref{Sec_MainResults}}
We come to the proof of Theorem~\ref{T:ExponentialInequality}. Similar as in \cite{yogeshwaran2017random}, we use a result of \cite{chalker1999size} to establish an exponential inequality without the need of bounding the martingale differences in the supremum-norm. 

\begin{proof}[Proof of Theorem~\ref{T:AbstractExpIneq}]
Consider the natural filtration of the process $X$, $\cF_i = \sigma(X_{1},\ldots,X_i)$ for $i=0,\ldots,n$ with the convention that $\cF_0 = \{\emptyset, \Omega\}$. We rewrite $H_n(\mX_n)-\E{H_n(\mX_n)}$ in terms of martingale differences as follows
\begin{align}\begin{split}\label{Eq:AbstractExpIneq2}
		H_n(\mX_n)-\E{H_n(\mX_n)} &= \sum_{i=1}^n V_{n,i},
\end{split}\end{align}
where
$V_{n,i} =\E{ H_n(\mX_n) | \cF_i } -  \E{H_n(\mX_n) | \cF_{i-1} }$.
An abstract result of \cite{chalker1999size} yields 
\begin{align}\begin{split}\label{Eq:AbstractExpIneq3}
				\p\Big( \Big| \sum_{i=1}^n V_{n,i} \Big| > b_1		\Big) &\le 2 \exp\left(	- \frac{b_1^2}{32 n b_2^2}	\right)  \\
				&\quad + \left(	1+ \frac{2 \sup_{1 \le i \le n} \norm{V_{n,i}}_{\p,\infty} }{ b_1 } \right) \sum_{i=1}^n \p\left( |V_{n,i}| > b_2		\right)
\end{split}\end{align}
for any $b_1,b_2 \in\R_+$. Hence, it remains to compute bounds of $V_{n,i}$. 
In all cases, we have the universal bound from \eqref{E:UniversalBound}. So, $\norm{V_{n,i}}_{\p,\infty}\le c_1 n^q$.

Next, we investigate the probabilities on the right-hand side of \eqref{Eq:AbstractExpIneq3}. Define for $a\in S $ and $i\in\{1,\ldots,n\}$
\begin{align*}
		I_{n,i}(a) & \coloneqq \int_{ {S}^{n-i}} \p\left( X_{i+1}^n \in \intd{x}_{i+1}^n \,|\, X_{1},\ldots, X_{i-1},  X_{i}= a 	\right) \, H_n( \{X_1^{i-1}, a, x_{i+1}^n \} ).
\end{align*}
Write $\nu_i$ for the conditional distribution of $ X_i$ given $(X_{1},\ldots,X_{i-1})$ on $S$, viz.,
\[
		\nu_i = \mathbbm{M}_{X_i|(X_{1},\ldots,X_{i-1})} \left( (X_{1},\ldots,X_{i-1}), \cdot \right).
\]
Then, it follows with elementary calculations that for each $1\le i\le n$
\[
		V_{n,i} \le \esssup_{ \text{ w.r.t.\ } \nu_i } I_{n,i}(\cdot) - \essinf_{ \text{ w.r.t.\ } \nu_i} I_{n,i}(\cdot) \quad a.s.
\]
Let $\epsilon>0$ be arbitrary but fixed. Choose $a^*,b^*\in S$ such that $I_{n,i}(a^*) \ge \esssup_{ \text{ w.r.t.\ } \nu_i } I_{n,i}(\cdot) - \epsilon / 2$ and $I_{n,i}(b^*) \le \essinf_{\text{ w.r.t.\ } \nu_i  } I_{n,i}(\cdot) + \epsilon/2$. Consider the Marton coupling of $(X_1,\ldots,X_n)$ and write $\mX_n^{(X_1,\ldots,X_{i-1},a^*,b^*)}$ for the point cloud associated to the coupling element $X^{(X_1,\ldots,X_{i-1},a^*,b^*)}$. The notation ${\mX'}_n^{(X_1,\ldots,X_{i-1},a^*,b^*)}$ is used for the point cloud obtained from the counterpart $X'^{(X_1,\ldots,X_{i-1},a^*,b^*)}$. Consequently,
\begin{align}
		V_{n,i}-\epsilon &\le I_{n,i}(a^*) - I_{n,i}(b^*) \nonumber\\
		&= \E{ H_n( \mX_n^{(X_1,\ldots,X_{i-1},a^*,b^*)} )  - H_n( {\mX'}_n^{(X_1,\ldots,X_{i-1},a^*,b^*)} )} \nonumber \\
		\begin{split}\label{Eq:AbstractExpIneq4}
		&= \int_{{S}^{n-i} \times {S}^{n-i} } \p_{ \left( X^{(X_{1},\ldots,X_{i-1},a^*,b^* )}, {X'}^{(X_{1},\ldots,X_{i-1},a^*,b^* )} \right) }\left( \intd{ ( y_{i+1}^n, {y'}_{i+1}^n)}		\right) \\
		&\qquad\qquad \times \left\{	H_n( \{X_1^{i-1},a^*, y_{i+1}^n\} ) - H_n( \{X_1^{i-1},b^*, y_{i+1}^n\} ) \right\},
\end{split}
\end{align}
(by abusing the notation slightly). We write $\mY_{n,i} =  \{X_1^{i-1},a^*, y_{i+1}^n\}$ and $\mY'_{n,i} =  \{X_1^{i-1},b^*, {y'}_{i+1}^n\}$ and consider the difference of the functionals in  \eqref{Eq:AbstractExpIneq4}. The point clouds $\mY_{n,i}$ and ${\mY'}_{n,i}$ in \eqref{Eq:AbstractExpIneq4} differ at most in $n-i+1$ entries for each $i$. These entries are $\{a^*, y_{i+1},\ldots,y_n\}$ and $\{b^*, y'_{i+1},\ldots,y'_n\}$. Thus, we can transform $\mY_{n,i}$ into ${\mY'}_{n,i}$ in $n-i+1$ steps exchanging one entry in each step, i.e., we consider the transformations
\begin{align}\label{Eq:AbstractExpIneq5b}
			 \mY_{n,i}^{(1)}=\mY_{n,i} \leftrightsquigarrow  \mY_{n,i}^{(2)} \leftrightsquigarrow \ldots \leftrightsquigarrow \mY_{n,i}^{(n-i+2)}={\mY'}_{n,i},
\end{align}
where $ \mY_{n,i}^{(\ell)} = \{X_1^{i-1}, b^*, {y'}_{i+1}^{i+\ell-2}, y_{i+\ell-1}^n \}$, for $\ell\in\{2,\ldots,n-i+2\}$.

Using this definition, the difference of the functionals in \eqref{Eq:AbstractExpIneq4} is bounded above by
\begin{align}\label{Eq:AbstractExpIneq5c}
		|H_n( \mY_{n,i} ) - H_n( {\mY'}_{n,i} ) | \le \sum_{\ell = 1}^{n-i+1} \left| H_n( \mY_{n,i}^{(\ell+1) } ) - H_n( \mY_{n,i}^{(\ell)} ) \right|.
\end{align}
The symmetric difference $\mY_{n,i}^{(\ell+1)} \triangle \mY_{n,i}^{(\ell)} $ is at most $\{a^*,b^*\}$ for $\ell=1$ and $\{y_{\ell+i-1},y'_{\ell+i-1} \}$ for $\ell \in \{2,\ldots, n-i+1\}$. Let $\ell \in\{2,\ldots, n-i+1\}$; clearly, if $y_{\ell+i-1} = y'_{\ell+i-1}$, then $| H_n(\mY_{n,i}^{(\ell+1) }  ) - H_n( \mY_{n,i}^{(\ell)}	) |=0$.
If $y_{\ell+i-1} \neq y'_{\ell+i-1}$, we can use \eqref{E:ExchOneCost}, which states that the exchange-one cost are local, to obtain 
\begin{align}\begin{split}\label{Eq:AbstractExpIneq6}
		 \big|  H_n ( \mY_{n,i}^{(\ell+1) } ) - H_n ( \mY_{n,i}^{(\ell)}  ) \big| &\le  c_2 \Big| \big(	\mY_{n,i}^{(\ell+1) } \cup \mY_{n,i}^{(\ell) }\big) \cap B(y_{\ell+i-1}, \eta_n^{-1} r ) \Big|^{\wt q} \\
		&\quad +  c_2 \Big|	\big( \mY_{n,i}^{(\ell+1) } \cup \mY_{n,i}^{(\ell) } \big)\cap B(y'_{\ell+i-1}, \eta_n^{-1} r ) \Big|^{\wt q},
		\end{split}
\end{align}
A similar argument applies to $| H_n ( \mY_{n,i}^{(2) } ) -  H_n (\mY_{n,i}^{(1)}) |$, which admits the same bound as in \eqref{Eq:AbstractExpIneq6} using the points $a^*,b^*$ 

Write $\sN(r)=\sN(r,S,d)$ for the $r$-covering number of $S$ w.r.t.\ $d$ from Condition~\ref{C:CoveringCondition}. Use the family of coverings $\{ \{ B(w_j,r): 1\le j\le \sN(r) \}: r>0\}$ to define for each $r>0$ and each $u>0$ the set
\begin{align}\label{Eq:AbstractExpIneq5}
		\mA_{n,u}(r) \coloneqq \{	x\in S^n: \#[ B(w_j,2r)\cap x] \le u,\: \forall \, 1\le j\le \sN(r) \}.
\end{align}
Loosely speaking, when considering only sets in $\mA_{n,u}(r)$, we can control the degree of accumulation within a point cloud on $S$ if we choose  $u<<n$ for some fixed $r$.  Using the definition of the set $\mA_{n,u}(r)$, \eqref{Eq:AbstractExpIneq4} is at most
\begin{align*}
		& \int_{{S}^{n-i} \times {S}^{n-i} } \p_{ \left( X^{(X_{1},\ldots,X_{i-1},a^*,b^* )}, {X'}^{(X_{1},\ldots,X_{i-1},a^*,b^* )} \right) }\left( \intd{ ( y_{i+1}^n, {y'}_{i+1}^n)}		\right) \Biggl\{ \\
		&\quad \Big( \1{ \mY_{n,i} \notin \mA_{n,u}( \eta_n^{-1} r)  } + \1{ {\mY'}_{n,i} \notin \mA_{n,u}(\eta_n^{-1} r)  } \Big) c_1 n^q   \\
		&\quad +  \1{ \mY_{n,i} \in \mA_{n,u}(\eta_n^{-1} r), \mY'_{n,i} \in \mA_{n,u}(\eta_n^{-1} r)  } \Biggl[ \\
		&\quad \Biggl( \sum_{\ell=2}^{n-i+1}  \1{y_{\ell+i-1} \neq y'_{\ell+i-1} } \Biggl[ c_2 \Big| \big(	\mY_{n,i}^{(\ell+1) } \cup \mY_{n,i}^{(\ell) }\big) \cap B(y_{\ell+i-1}, \eta_n^{-1} r ) \Big|^{\wt q} \\
		&\quad +  c_2 \Big|	\big( \mY_{n,i}^{(\ell+1) } \cup \mY_{n,i}^{(\ell) } \big)\cap B(y'_{\ell+i-1}, \eta_n^{-1} r ) \Big|^{\wt q} \Biggl] \Biggl) \\
		&\quad +  \Biggl( c_2 \Big| \big(	\mY_{n,i}^{(2) } \cup \mY_{n,i}^{(1) }\big) \cap B(a^*, \eta_n^{-1} r ) \Big|^{\wt q}  +  c_2 \Big|	\big( \mY_{n,i}^{(2) } \cup \mY_{n,i}^{(1) } \big)\cap B(b^*, \eta_n^{-1} r ) \Big|^{\wt q} \Biggl) \Biggl] \Biggl\}. 
\end{align*}
Clearly, if both $\mY_{n,i}$ and ${\mY'}_{n,i}$ are in $\mA_{n,u}(\eta_n^{-1} r)$, then each point cloud of the type $\mY_{n,i}^{(\ell)}$ and $\mY_{n,i}^{(\ell+1)}\cup \mY_{n,i}^{(\ell)}$ is in $\mA_{n,2u}(\eta_n^{-1} r)$.

By Condition~\ref{C:CoveringCondition} there is a covering $\{ B(w_k, \eta_n^{-1}r): 1\le j\le \sN(\eta_n^{-1} r ) \}$ of $S$. Consequently, there is a $1\le k\le \sN(\eta_n^{-1}r)$ such that $y_{\ell+i-1} \in B(w_k,\eta_n^{-1}r)$ and the neighborhood $B(y_{\ell+i-1},\eta_n^{-1}r)$ is contained in $ B(w_k,2\eta_n^{-1}r) $.
So, in the case where $\mY_{n,i}$ and ${\mY'}_{n,i}$ are both in $\mA_{n,u}(\eta_n^{-1} r)$, we have
\[
	\Big| \big(	\mY_{n,i}^{(\ell+1) } \cup \mY_{n,i}^{(\ell) }\big) \cap B(y_{\ell+i-1}, \eta_n^{-1} r ) \Big|^{\wt q} \le \Big| \big(	\mY_{n,i}^{(\ell+1) } \cup \mY_{n,i}^{(\ell) }\big) \cap B(w_k, 2\eta_n^{-1} r ) \Big|^{\wt q} \le (2u)^{\wt q},
\]
the same applies to the neighborhoods of $y'_{\ell+i-1}, a^*, b^*$. Thus, we obtain an upper bound of the following type for the integral in \eqref{Eq:AbstractExpIneq4}
\begin{align}
				&\int_{S^{n-i} \times S^{n-i}}  \p_{ \left( [X^{(X_{1},\ldots,X_{i-1},a^*,b^* )}]_{i+1}^n, [{X'}^{(X_{1},\ldots,X_{i-1},a^*,b^* )}]_{i+1}^n \right) }\left( \intd{ ( y_{i+1}^n, {y'}_{i+1}^n)}	\right) \nonumber \\
		&\quad \left\{	c_1 n^q( \1{ \mY_{n,i} \notin \mA_{n,u}(\eta_n^{-1} r)  } + \1{ {\mY'}_{n,i} \notin \mA_{n,u}(\eta_n^{-1} r)  } ) + 2c_2 (2u)^{\wt q} \sum_{j=i}^n \1{ y_j \neq y'_j  } \right\} \nonumber \\
		\begin{split}\label{Eq:AbstractExpIneq7}
		&= c_1 n^q \Big( \p(  \mX_n \notin \mA_{n,u}(\eta_n^{-1} r) | X_1^{i-1}, X_i = a^*) + \p( \mX_n \notin \mA_{n,u}(\eta_n^{-1} r) | X_1^{i-1}, X_i = b^*) \Big)  \\
		&\qquad\qquad\qquad + 2 c_2 (2u)^{\wt q} \sum_{j=i}^n \p\left(	 X_j^{(X_{1},\ldots,X_{i-1},a^*,b^* )} \neq  {X'}_j^{(X_{1},\ldots,X_{i-1},a^*,b^* )}	\right).
		\end{split}
\end{align}
The last term in \eqref{Eq:AbstractExpIneq7} is at most $2c_2 (2u)^{\wt q} \gamma_\infty$ uniformly in $n,i$ and $a^*,b^*$ due to the condition from \eqref{C:MixingMatrix}, which implies
\[
		\sup_{n\in\N} \sup_{i\in\{1,\ldots,n\}} \sum_{j=1}^n \p(	 X_j^{(X_{1},\ldots,X_{i-1},a^*,b^* )} \neq  {X'}_j^{(X_{1},\ldots,X_{i-1},a^*,b^* )}	) \le \gamma_\infty \quad a.s.
\]
Moreover as $\epsilon$ was arbitrary, this last bound from \eqref{Eq:AbstractExpIneq7} is also true for the limit and we have
\begin{align}
		|V_{n,i}| &\le \esssup_{ \text{ w.r.t.\ } \nu_i } I_{n,i}(\cdot) - \essinf_{\text{ w.r.t.\ } \nu_i  } I_{n,i}(\cdot) \nonumber \\
		\begin{split}\label{Eq:AbstractExpIneq7b}
		&\le c_1 n^q \Big( \p(  \mX_n \notin \mA_{n,u}(\eta_n^{-1} r) | X_1^{i-1}, X_i = a^*) \\
		&\quad  + \p( \mX_n \notin \mA_{n,u}(\eta_n^{-1} r) | X_1^{i-1}, X_i = b^*) \Big) + 2 c_2 \gamma_\infty (2u)^{\wt q}	
		\end{split}
\end{align}
where all constants are uniform in $n$ and $i$. We fix the value of $u$ in the following as $u \coloneqq n^\gamma$ and return to \eqref{Eq:AbstractExpIneq4}. We choose $b_2 = 4 c_2 \gamma_\infty (2u)^{\wt q}$. Thus, using \eqref{Eq:AbstractExpIneq7b}
\begin{align}
			\p\left( |V_{n,i}| > b_2		\right) &\le  \p\left( c_1 n^q \p\left(\mX_{n} \notin \mA_{n,u}(\eta_n^{-1}r) \,|\, X_{1}^{i-1}, X_i = a^* \right) >  b_2 / 4	\right) \nonumber \\
			&\quad\qquad\qquad +  \p\left( c_1 n^q \p\left( \mX_n \notin \mA_{n,u}(\eta_n^{-1} r) \,|\, X_1^{i-1}, X_i = b^* \right) > b_2/ 4 \right) \nonumber \\
			\begin{split}\label{Eq:AbstractExpIneq8}
			& \le  \frac{4 c_1 n^q}{ b_2 } \Biggl[ \E{ \p\left( \mX_{n} \notin \mA_{n,u}(\eta_n^{-1}r) \,|\, X_{1}^{i-1}, X_i = a^* \right) } \\
					&\quad\qquad\qquad + \E{\p\left( \mX_{n} \notin \mA_{n,u}(\eta_n^{-1}r) \,|\, X_{1}^{i-1}, X_i = b^* \right)}\Biggl],
\end{split}
\end{align}
where we use Markov's inequality in the last step. We bound above both expectations in \eqref{Eq:AbstractExpIneq8} as follows: First note that for $r\ge 0$ and $w\in S$
\begin{align*}
	\p( e^{\1{X_k\in B(w,r)} } \,|\, X_{1}^{i-1}, X_i = a, X_{i+1}^{k-1} ) \le \exp( f^*(e-1) \mu( B(w,r)) ) \quad a.s.
\end{align*}
In particular, we have for each $r>0$
\[
	\p( e^{\sum_{k>i} \1{X_k\in B(w,r)} } \,|\, X_{1}^{i-1}, X_i = a ) \le \exp( (n-i) f^*(e-1) \mu( B(w,r)) ).
\]
Additionally, we apply Lemma~\ref{ConcentrationBoundedTransitionKernels} to $\E{ \exp( \sum_{k< i} \1{X_k\in B(w,r)} )}$ and use that $\1{X_i\in B(w,r)}$ is at most 1. Then we obtain for a state $a \in S$
\begin{align*}
		&\E{ \p\left( \mX_{n} \notin \mA_{n,u}(\eta_n^{-1}r) \,|\, X_{1}^{i-1}, X_i = a \right) } \\
		&=\E{ \p\left(	 X_{1}^{n} \cap B(w_j,2\eta_n^{-1} r ) > u \text{ for one } j \le \sN(\eta_n^{-1}r) \,|\, X_{1}^{i-1}, X_i = a \right) }  \\
		&\le \sN(\eta_n^{-1}r) \ \max_{1\le j\le \sN(\eta_n^{-1}r) } \E{ \p\left( \sum_{k=1}^n \1{X_k\in B(w_j,2\eta_n^{-1}r)} > u \,\Big|\, X_{1}^{i-1}, X_i = a \right)	}  \\
		&\le \sN(\eta_n^{-1}r) \  \exp\big( (n-1) f^* (e-1) \ \sup_{w\in S} \mu( B(w,2\eta_n^{-1}r)) - u + 1	\big).
\end{align*}
Combining this last inequality with \eqref{Eq:AbstractExpIneq8}, we see that
\begin{align*}
	&\p\left( |V_{n,i}| > b_2		\right) \\
	& \le \frac{8 c_1 n^q \ \sN(\eta_n^{-1}r) }{4 c_2 \gamma_\infty (2 n^\gamma )^{\ol q}} \  \exp\big( (n-1) f^* (e-1) \ \sup_{w\in S} \mu( B(w,2\eta_n^{-1}r)) - n^\gamma + 1	\big).
\end{align*}
Moreover, inserting this result in \eqref{Eq:AbstractExpIneq3} for the above choice of $b_2$ and $b_1= n^a t$, yields
\begin{align*}
				&\p\big( \big| \sum_{i=1}^n V_{n,i} \big| > n^a t	\big)  \\
				&\le 2 \exp\left(	-  \frac{n^{2a} t^2}{ 32 n (4c_2\gamma_\infty (2 n^\gamma )^{ \wt q})^2 } \right) + \left(	1+  \frac{2c_1 n^q}{n^a t} \right) \, \frac{8 c_1 n^{q+1} \ \sN(\eta_n^{-1}r) }{4 c_2 \gamma_\infty (2 n^\gamma )^{\wt q}} \\
				&\quad   \times \exp\big\{ (n-1) f^* (e-1) \ \sup_{w\in S} \mu( B(w,2\eta_n^{-1}r)) - n^\gamma + 1	\big\} \\
				&\le 2 \exp\left(	-  \frac{n^{2(a-\gamma \wt q)-1} t^2 }{ 4^{\wt q}2 (16 c_2 \gamma_\infty )^2} \right) + \frac{2 c_1 e n^{2q+1 -\gamma \wt q-a}}{c_2 2^{\wt q}\gamma_\infty} \Big( \frac{1}{n^{q-a}} + \frac{2c_1}{t} \Big) \\
				&\quad  \times \exp\Big\{ - \big[ n^\gamma - \log\sN(\eta_n^{-1}r) - f^*(e-1) \ n \sup_{w\in S} \mu( B(w,2\eta_n^{-1}r)) 		\big]		\Big\}.
\end{align*}
Finally, applying the definition of $\gamma$ completes the proof.
\end{proof}

\begin{proof}[Proof of Theorem~\ref{T:ExponentialInequality}]
It remains to verify that the persistent Betti function $\beta^{r,s}_q$ satisfies the condition in \eqref{E:UniversalBound} and in \eqref{E:ExchOneCost}. It follows from the definition of Betti numbers that \eqref{E:UniversalBound} is satisfied for $c_1 = 1$ and  the exponent $q$.

Next, we inspect the condition in \eqref{E:ExchOneCost}. Let $\mY, \mZ$ be two point clouds of $n$ points, which differ in exactly one point, viz., $\mY \Delta \mZ = \{y,z\}$. We can use the Geometric Lemma (Lemma~\ref{L:GeometricLemma}) to obtain 
\begin{align}
		\begin{split}\label{Eq:ExponentialInequalityBettiNumbers6}
		\Big| \beta_q^{r,s} \Big( \cK ( \eta_n\mY ) \Big) - \beta_q^{r,s} \Big( \cK ( \eta\mZ  )	\Big) \Big|  &\le \sum_{j=q,q+1} K_j\Big(	\mY \cup \mZ , \eta_n^{-1} s; \{y\}	\Big) \\
		&\quad + \sum_{j=q,q+1} K_j\Big( \mY\cup\mZ	, \eta_n^{-1}s; \{z\}	\Big),
		\end{split}
\end{align}
where we use for the last inequality the scaling relation $K_j( \eta (\mY\cup\mZ) ,r) = K_j(\mY\cup\mZ,\eta^{-1}r)$, which is valid for the {\v C}ech and the Vietoris-Rips complex for all $\eta>0$ because of the homogeneity of $d$.

Observe that a $j$-simplex in the filtration $K(	\mY\cup\mZ, \eta_n^{-1}s )$ has a diameter of at most $2\eta_n^{-1}s$. Thus, a $j$-simplex with a node in a point $y$, resp.\ $z$, lies in the $(2\eta_n^{-1}s)$-neighborhood of $y$, resp.\ $z$. Consequently, \eqref{Eq:ExponentialInequalityBettiNumbers6} is at most
\[
	2 | (\mY\cup \mZ) \cap B(y,2\eta_n^{-1}s) |^{q+1} + 2 | (\mY\cup \mZ) \cap B(z,2\eta_n^{-1}s) |^{q+1}.
\]
Hence, the condition in \eqref{E:ExchOneCost} is satisfied with $c_2 = 2$, $r = 2s$ and exponent $\wt q = q+1$.
\end{proof}

\begin{proof}[Proof of Theorem~\ref{T:ConvergenceExpectationDiscrete}]
We split the proof in three parts. We show in the first part that
\[
			n^{-1} \E{ \beta_q^{r,s}( \cK( n^{1/d} \mX_n)) } = \sum_{i=1}^{m^p} n^{-1} \E{ \beta_q^{r,s}( \cK( n^{1/d} (\mX_n \cap A_i) ) ) }  + o(1), \quad n\to\infty.
\]
Define a filtration, which is the union of the single filtrations when restricted to the cubes $A_i$, by the complexes
\[
			\mathring{K}(n^{1/p} \mX_n,r) = \bigcup_{i=1}^{m^p} K(n^{1/p} (\mX_n \cap A_i) , r ), \quad r\ge 0.
\]
Since this union is of disjoint complexes, we have $\beta_q^{r,s} ( \mathring{\cK}(n^{1/p} \mX_n) ) = \sum_{i=1}^{m^p} \beta_q^{r,s} ( \cK (n^{1/p} (\mX_n \cap A_i) ) )$.
We use Lemma~\ref{L:GeometricLemma} and Lemma~\ref{L:SimplicesAndMeasure} to arrive at 
\begin{align*}
				&n^{-1} \E{	|	\beta_q^{r,s} ( \cK (n^{1/p} \mX_n ) )  - \beta_q^{r,s} ( \mathring{\cK}(n^{1/p} \mX_n)  )  | } \\
			&\le	 n^{-1} \sum_{j=q,q+1} \E{	K_j\Big( (n^{1/p}\mX_n) \cap \Big(\bigcup_{i=1}^{m^p} (\partial (n^{1/p} A_i) )^{(2s )} \Big) , s \Big)		} \\
			&\le 2 (f^*)^2 |B(0,2s)|^{q+2} m^p \frac{(n^{1/p}+4s)^p-n}{n} =: R_{n}.
\end{align*}
Then $R_{n}$ is of order $n^{-1/p}$. So, we can consider the expectation on the blocks $A_i$ instead. 

From now let $i\in\{1,\ldots,m^p\}$ be an arbitrary but fixed index. Write $\ell_{i,1},\ldots,\ell_{i,p}$ for the edge lengths of $A_i$. So that $|A_i|$ equals $\prod_{j=1}^p \ell_{i,j}$. Also write $M_i$ for the diagonal matrix $diag(\ell_{i,j}: 1\le j \le p)$. Note that $\operatorname{det}(M_i) = |A_i|$. This completes the first part.

In the second part, we use McDiarmid's inequality from Theorem~\ref{Thrm:McDiarmidIneq}. Set $S_{n,i} = \sum_{t=1}^n \1{X_t\in A_i}$ and $h(n) = n^{3/4}$. Since $(X_1,\ldots,X_n)$ admits a Marton coupling which satisfies \eqref{C:MixingMatrix}, we can apply Theorem~\ref{Thrm:McDiarmidIneq} to arrive at
\[
			\p( | S_{n,i} - \E{S_{n,i}}| \ge h(n)) \le 2 \exp\Big( - \frac{2n^{1/2} }{\gamma_\infty } \Big).
\]
Using the definition $I_{n,i} = [-h(n) + \E{S_{n,i}}, h(n) + \E{S_{n,i}}]$ and the fact that the Betti numbers of dimension $q$ are polynomially bounded by $n^{q+1}$, we obtain 
\begin{align*}
						&\Big| n^{-1} \E{ \beta_q^{r,s}( \cK( n^{1/p} (\mX_n \cap A_i) ) ) } - \sum_{k\in I_{n,i}} n^{-1} \E{ \1{S_{n,i}=k} \beta_q^{r,s}( \cK( n^{1/p} (\mX_n \cap A_i) ) ) } \Big| \\
						&\le n^{-1} n^{q+1} \p( |S_{n,i}- \E{S_{n,i}}| \ge h(n)) \le 2 n^q \ \exp\Big( - \frac{2n^{1/2} }{\gamma_\infty } \Big). 
\end{align*}
In particular, $n^{-1}\mathbb{E}[ \beta_q^{r,s}( \cK( n^{1/p} (\mX_n \cap A_i) ) ) \{ S_{n,i} \notin I_{n,i} \} ]$ is negligible and we can focus in the following on the restriction $ \{ S_{n,i} \in I_{n,i} \}$. For this purpose, write $\mu_{n,i} = \floor{ \E{S_{n,i}} } = \floor{ n \alpha_i |A_i| }$, then it follows from Lemma 5.5 in \cite{krebs2018asymptotic} that for each $0\le r \le s$
\begin{align}\begin{split}\label{E:UnifConvergenceBettiPoisson}
		&\limsup_{n\rightarrow \infty} \sup_{k\in I_{n,i} } n^{-1} \Big| \E{ \beta_q^{r,s}( \cK( n^{1/p} ( X'_1,\ldots,X'_k ) ) )} \\
		&\qquad\qquad\qquad\qquad - \E{ \beta_q^{r,s}( \cK( n^{1/p} ( X'_1,\ldots,X'_{\mu_{n,i}}) ) ) } \Big| = 0,
\end{split}\end{align}
where the $X'_1, X'_2,\ldots$ are independent and uniformly distributed on $[0,1]^p$. We will use \eqref{E:UnifConvergenceBettiPoisson} later.

In the third part, we study the success runs of $(X_t: 1\le t\le n)$ and the sum $S_{n,i}$: If an $X_t$ falls in $A_i$, we term this a success and a failure otherwise. Consider a path with exactly $k$ successes $J = (\mathscr{F}_1, \mathscr{S}_1, \mathscr{F}_2, \ldots, \mathscr{F}_v, \mathscr{S}_v, \mathscr{F}_{v+1}) \in \{0,1\}^n$, where $v\le k$ and where each $\mathscr{S}_i$ is a sequence  of 1's and each $\mathscr{F}_i$ a sequence of 0's (potentially $\mathscr{F}_1$ and $\mathscr{F}_{v+1}$ have length 0) for $i \in\{1,\ldots v\}, (\text{resp. in } \{1,\ldots v+1\})$. So, on the path $J$, we have $S_{n,i}=k$.

Consider the expectation on this path $J$. For this write $J^*$ for the index set which contains the positions in $J$ that mark a success. Write $\mM_{t}$ for the conditional distribution of $X_t$ given the past $X_1,\ldots,X_{t-1}$. Then
\begin{align}
			&n^{-1} \E{ \1{X_1^n\in J} \beta_q^{r,s}( \cK( n^{1/p} (\mX_n \cap A_i) ) ) } \nonumber \\
			\begin{split}\label{E:ConvergenceExpectationDiscrete1}
			&=n^{-1} \int_{[0,1]^p} \p_{X_1}(\intd{x_1}) \int_{[0,1]^p} \mM_{2}(\intd{x_2}|x_1) \ldots \int_{[0,1]^p} \mM_{n}(\intd{x_n}|x_1,\ldots,x_{n-1} )  \\
			&\quad\quad\quad\quad \times \beta_q^{r,s}( \cK( n^{1/p} (x_1^n \cap A_i) ) ) \prod_{i\in J^*} \1{x_\ell \in A_i} \prod_{i\in J\setminus J^*} \1{x_\ell \notin A_i}.
\end{split}\end{align}			
Consider the situation for the last success which is given at a position $t^*$. Note that each $\mM_{t}$ admits a conditional density $f_t$ because the distribution of $X_t$ on each block $A_j$, $j\in\{1,\ldots,m^p\}$, is uniform and independent of the past observations $X_1,\ldots,X_{t-1}$ given that $X_t$ falls in the block $A_j$. So this $f_t(x_t|x_1,\ldots,x_{t-1})$ is constant for all $x_t$ from a block $A_j$. Due to the blocked structure of the conditional densities of $X$ and the invariance property from \eqref{E:InvarianceOnCells}, the contribution of the observations $X_{t^*},\ldots, X_n$ to the integral in \eqref{E:ConvergenceExpectationDiscrete1} is then
\begin{align*}
			&\int_{[0,1]^p} \mM_{t^*}(\intd{x}_{t^*}|x_1,\ldots, x_{t^*-1}) \, \beta^{r,s}_q(  \cK( n^{1/p} (x_{J^*\setminus \{t^*\}},x_{t^*})  ) ) \,\\
			&\qquad\qquad\qquad \p( X_{t^*+1}\notin A_i,\ldots,X_n \notin A_i \,|\, x_1,\ldots, x_{t^*} ) \\
			&= f_{t^*}(z_{t^*}|x_1,\ldots,x_{t^*-1})  \, \p( X_{t^*+1}\notin A_i,\ldots,X_n \notin A_i \,|\, x_1,\ldots, x_{t^*-1}, z_{t^*} )\; \\
			&\qquad\qquad\qquad \int_{A_i}  \intd{x}_{t^*} \beta^{r,s}_q(  \cK( n^{1/p} (x_{J^*\setminus \{t^*\}},x_{t^*})  ) ) \\
		 &=\p( X_{t^*}\in A_i, X_{t^*+1}\notin A_i,\ldots,X_n \notin A_i \,|\, x_1,\ldots, x_{t^*-1} ) \\
		 &\qquad\qquad\qquad \int_{A_i} \intd{x}_{t^*} |A_i|^{-1} \beta^{r,s}_q(  \cK( n^{1/p} (x_{J^*\setminus \{t^*\} }, x_{t^*})  ) ),
\end{align*}	
where $z_{t^*}$ is an arbitrary but fixed element of $A_i$ and where we use for the last equation that 
\[
	f_{t^*} (z_{t^*}|x_1,\ldots,x_{t^*-1}) \ |A_i| = \p(X_{t^*}\in A_i \,|\, x_1,\ldots, x_{t^*-1}).
	\]
Using recursively this conditional independence argument, one obtains for \eqref{E:ConvergenceExpectationDiscrete1}
\begin{align}
			&n^{-1} \E{ \1{X_1^n\in J} \beta_q^{r,s}( \cK( n^{1/p} (\mX_n \cap A_i) ) ) } \nonumber \\
			&= n^{-1} \p( X_1^n \in J) \int_{A_i} \ldots \int_{A_i} \intd{ x_1 } \ldots \intd{ x_k } \,  |A_i|^{-k} \beta_q^{r,s}( n^{1/p}( x_1,\ldots, x_k)) \nonumber \\
			&=\p( X_1^n \in J) \ n^{-1} \, \E{ \beta_q^{r,s}( n^{1/p}  \{ M_i X'_1,\ldots, M_i X'_k) \} }, \label{E:ConvergenceExpectationDiscrete2}
\end{align}
where the $X'_t$ are independent and uniformly distributed on $[0,1]^p$. 

Moreover, using the uniform approximation result from \eqref{E:UnifConvergenceBettiPoisson} shows that \eqref{E:ConvergenceExpectationDiscrete2} equals
\begin{align}\label{E:ConvergenceExpectationDiscrete2b}
			&\p( X_1^n \in J) \ \Big( n^{-1} \  \E{ \beta_q^{r,s}( n^{1/p}  \{ M_i X'_1,\ldots, M_i X'_{\mu_{n,i}}\} )  } + o(1) \Big), \quad n\to\infty,
\end{align}
where the remainder $o(1)$ is uniform in $J$ and $k\in I_{n,i}$. Furthermore, using the dilatation rules of the expectation of persistent Betti numbers computed from the {\v C}ech or Vietoris-Rips filtration, we obtain for the main term of \eqref{E:ConvergenceExpectationDiscrete2b}
\begin{align*}
			& \p( X_1^n \in J) \; (\alpha_i |A_i|) \; \frac{\alpha_i^{-1} |A_i|^{-1}}{n} \\
			&\qquad\qquad\qquad\E{ \beta_q^{\alpha_i^{1/p}(r,s)} \big( (n\alpha_i |A_i| )^{1/p}  \{ (M_i |A_i|^{-1/p}) X'_1,\ldots, (M_i |A_i|^{-1/p}) X'_{\mu_{n,i}} \} \big) } \\
			&= \p( X_1^n \in J) \Big( ( \alpha_i |A_i| )\; \hat{b}_q( \alpha_i^{1/p}(r,s) ) + o(1) \Big), \quad n\to \infty,
\end{align*}
where the remainder $o(1)$ is uniform in $J$ and where the last equality follows as in the proof of Lemma 10 in \cite{divol2018persistence}. Summing over all paths $J$ with exactly $k$ successes, over all $k\in I_{n,i}$ and over all $i=1,\ldots,m^p$ yields then the conclusion, viz.,
\[
			\lim_{n\to\infty} n^{-1} \E{ \beta_q^{r,s}( \cK( n^{1/p} \mX_n)) } = \sum_{i=1}^{m^p} ( \alpha_i |A_i| )\;  \hat{b}_q( \alpha_i^{1/p}(r,s) )  = \E{ \hat{b}_q( \kappa(X_t)^{1/p}(r,s) ) }.
\]
This proves the first assertion in \eqref{E:LimitDiscrete}. Combining this last statement with Theorem~\ref{T:ExponentialInequality} and the Borel-Cantelli-Lemma shows the second assertion in \eqref{E:LimitDiscreteB}. So, the proof is complete.
\end{proof}

\begin{proof}[Proof of Theorem~\ref{T:ConvergenceExpectationContinuous}]
In the proof, we sometimes abuse the notation slightly in order to keep formulas shorter. To be precise, we write $K(U_1^n,r)$ for the simplicial complex $K( \{U_1,\ldots,U_n \},r)$ of a vector $U_1,\ldots,U_n$ at filtration time $r$ to save space. The related expressions are abbreviated in this way, too.

In the first step of the proof, we construct a discrete Markov chain of order $m$, $\wt{X}$, which approximates $X$ closely. To this end, let $\epsilon>0$ be arbitrary but fixed. We use a discrete density function $g_\epsilon$, which is an approximation of the joint density $g$ of $(X_1,\ldots,X_{m+1})$. We write $f_{t}$ for the conditional density of $X_t$ given $(X_{t-1},\ldots, X_1)$ with the convention that $f_1$ is the marginal density $\kappa$. 

Since we assume that the process $X$ is a Markov chain of order $m$, we are actually dealing with the conditional densities $f_1 \equiv \kappa, f_2,\ldots, f_{m+1}$ only. Using the approximation $g_\epsilon$, we obtain approximations $f_{\epsilon,1} =: \kappa_\epsilon, f_{\epsilon,2},\ldots,f_{\epsilon,m+1}$, which are defined in the same spirit as the $f_t$. We choose the precision between $g$ and $g_\epsilon$ sufficiently high (in the $\|\cdot\|_\infty$-norm) such that
\begin{align}\begin{split}\label{EQ:ConvergenceExpectationContinuous1} 
				&\big| f_{\epsilon,t}(x_i| x^{i-1}_{i-t+1}) - f_{t}(x_i| x^{i-1}_{i-t+1}) \big| \le \epsilon, \\
				&\quad \forall x=(x_{i-t+1},\ldots,x_{i-1},x_i)\in [0,1]^{tp}, \quad \forall \, t=1,\ldots, m+1.
\end{split}\end{align}
Thus, at each step of the evolution of the Markov chain, we can approximate each conditional density with a discrete conditional density at a precision of at least $\epsilon/2$ (measured in the total variation distance). Note that this is possible because we assume that $\inf\{ g(z): z\in [0,1]^{(m+1)p}\} >0$, so that all conditional densities are well defined.

We write $\wt{X}$ for the Markov chain of order $m$ obtained from the above $\epsilon$-approximation scheme, note that we can also choose $g_\epsilon$ to be strictly positive. In particular, this implies that $\wt{X}$ satisfies the assumptions of Theorem~\ref{T:ConvergenceExpectationDiscrete} because it is uniformly geometrically ergodic, see \cite{meyn2012markov} Theorem~16.0.2. Clearly, also $X$ is uniformly geometrically ergodic, hence, $X$ admits a Marton coupling (see also Example~\ref{MarkovChainDelayEmbedding} and \cite{paulin2015concentration} Proposition 2.4), whose mixing matrix $\Gamma^{(n)}$ (based on $X_1,\ldots,X_n$) satisfies
\begin{align}\label{EQ:ConvergenceExpectationContinuous2}
			\sup_{n\in\N} \max_{1\le i \le n} \sum_{j=i}^n (j-i)^\delta \ \Gamma^{(n)}_{i,j} < \infty, \quad \forall \delta \ge 0.
			\end{align}
In the second step, we use the decomposition
\begin{align}
		& \left|n^{-1} \E{ \beta^{r,s}_q( \cK( n^{1/p} \mX_n) ) } - \E{ \hat{b}_q(\kappa(Y)^{1/p}(r,s)) } \right| \nonumber \\
		&\le n^{-1} \left| \E{ \beta^{r,s}_q( \cK( n^{1/p} \mX_n) ) } - \E{ \beta^{r,s}_q( \cK( n^{1/p} \wt\mX_n) ) } \right| \label{EQ:ConvergenceExpectationContinuous3} \\
		&\quad +  \left|n^{-1}  \E{ \beta^{r,s}_q( \cK( n^{1/p} \wt\mX_n) ) } - \E{ \hat{b}_q(\kappa_\epsilon(Y_\epsilon)^{1/p}(r,s)) } \right| \label{EQ:ConvergenceExpectationContinuous4} \\
		& \quad + \left| \E{ \hat{b}_q(\kappa\epsilon(Y_\epsilon )^{1/p}(r,s)) } - \E{ \hat{b}_q(\kappa(Y)^{1/p}(r,s)) } \right|, \label{EQ:ConvergenceExpectationContinuous5}
\end{align}
where the random variables $Y_\epsilon$ (resp. $Y$) have density $\kappa_\epsilon$ (resp. $\kappa$). 

If $\epsilon$ converges to 0, \eqref{EQ:ConvergenceExpectationContinuous5} converges to 0, too, see as well \eqref{E:LimitContinuity} and \cite{divol2018persistence} for details. Moreover, from Theorem~\ref{T:ConvergenceExpectationDiscrete}, we conclude that \eqref{EQ:ConvergenceExpectationContinuous4} converges to 0 as $n$ tends to $\infty$ for each $\epsilon>0$ and corresponding approximation $g_\epsilon$.

So, the term in \eqref{EQ:ConvergenceExpectationContinuous3} remains. Let $w\in \N$ and $\ol w$ be such that $1/w + 1/ \ol w =1$. For the remainder of this proof, we show that \eqref{EQ:ConvergenceExpectationContinuous3} is at most $C^*_{s,p,w} \epsilon^{1/\ol w} + o(1)$ uniformly in $n$, where the constant $C^*_{s,p, w}$ does neither depend on the choice of the approximation parameter $\epsilon$ nor on $n$. It solely depends on $s,p$ and $w$. For this we rewrite the expectations in \eqref{EQ:ConvergenceExpectationContinuous3} as
\begin{align}
\begin{split}\label{EQ:ConvergenceExpectationContinuous6}
			&\E{ \beta^{r,s}_q( \cK( n^{1/p} \mX_n) ) }  \\
			&= \int \intd{x_1} f_1(x_1)\,  \int \intd{x_2} f_2(x_2\,|\, x_1)\, \ldots \int \intd{x_m} f_m(x_m\,|\, x^{m-1}_1)\, \\
			&\quad \int \intd{x_{m+1} } f_{m+1}(x_{m+1} \,|\, x^{m}_1) \ldots \int \intd{x_{n} } f_{m+1}(x_{n} \,|\, x^{n-1}_{n-m}) \beta^{r,s}_q( \cK( n^{1/p} x_1^n) ), \end{split} \\
			&\nonumber \\
			\begin{split}\label{EQ:ConvergenceExpectationContinuous7}	
			&\E{ \beta^{r,s}_q( \cK( n^{1/p} \wt\mX_n) ) }  \\
			&= \int \intd{x_1} f_{\epsilon,1}(x_1)\,  \int \intd{x_2} f_{\epsilon,2}(x_2\,|\, x_1)\, \ldots \int \intd{x_m} f_{\epsilon,m} (x_m\,|\, x^{m-1}_1)\, \\
			&\quad \int \intd{x_{m+1} } f_{\epsilon,m+1}(x_{m+1} \,|\, x^{m}_1) \ldots \int \intd{x_{n} } f_{\epsilon,m+1}(x_{n} \,|\, x^{n-1}_{n-m})  \beta^{r,s}_q( \cK( n^{1/p} x_1^n) ). \end{split}
\end{align}
We transform \eqref{EQ:ConvergenceExpectationContinuous6} in \eqref{EQ:ConvergenceExpectationContinuous7} in $n$-steps using a specific coupling in each step. For this purpose we write the difference between \eqref{EQ:ConvergenceExpectationContinuous6} and \eqref{EQ:ConvergenceExpectationContinuous7} as a telescopic sum as follows (the exchanged factor is given in square parentheses)
\begin{align}
			& 	\E{ \beta^{r,s}_q( \cK( n^{1/p} \mX_n) ) -  \beta^{r,s}_q( \cK( n^{1/p} \wt\mX_n) )} \nonumber \\
			&= \int \intd{x_1} \big[f_1(x_1) - f_{\epsilon,1}(x_1) \big]\,  \int \intd{x_2} f_2(x_2\,|\, x_1)\, \ldots \int \intd{x_m} f_m(x_m\,|\, x^{m-1}_1)\, \nonumber\\
			&\quad \qquad\qquad \int \intd{x_{m+1} } f_{m+1}(x_{m+1} \,|\, x^{m}_1) \ldots \int \intd{x_{n} } f_{m+1}(x_{n} \,|\, x^{n-1}_{n-m}) \; \beta^{r,s}_q( \cK( n^{1/p} x_1^n) ) \nonumber \\
			&\quad +  \int \intd{x_1} f_{\epsilon,1}(x_1)\,  \int \intd{x_2} \big[f_2(x_2\,|\, x_1) - f_{\epsilon,2}(x_2\,|\, x_1) \big]\, \ldots \int \intd{x_m} f_m(x_m\,|\, x^{m-1}_1)\, \nonumber \\
			&\quad  \qquad\qquad \int \intd{x_{m+1} } f_{m+1}(x_{m+1} \,|\, x^{m}_1) \ldots \int \intd{x_{n} } f_{m+1}(x_{n} \,|\, x^{n-1}_{n-m}) \; \beta^{r,s}_q( \cK( n^{1/p} x_1^n) ) \nonumber \\
			&\quad + \ldots \nonumber \\
			\begin{split}\label{EQ:ConvergenceExpectationContinuous8}
			&\quad + \int \intd{x_1} f_{\epsilon,1}(x_1)\,  \int \intd{x_2}  f_{\epsilon,2}(x_2\,|\, x_1) \, \ldots \int \intd{x_m} f_{\epsilon,m} (x_m\,|\, x^{m-1}_1)\, \\
			&\quad \qquad  \int \intd{x_{m+1} } f_{\epsilon, m+1}(x_{m+1} \,|\, x^{m}_1) \ldots \int \intd{x_{t} } \big[ f_{m+1}(x_{t} \,|\, x^{t-1}_{t-m}) -  f_{\epsilon, m+1}(x_{t} \,|\, x^{t-1}_{t-m}) \big] \ldots  \\
			&\quad \qquad \ldots  \int \intd{x_{n} } f_{m+1}(x_{n} \,|\, x^{n-1}_{n-m}) \; \beta^{r,s}_q( \cK( n^{1/p} x_1^n) ) 
			\end{split}\\
				&\quad + \ldots \nonumber\\
			&\quad + \int \intd{x_1} f_{\epsilon,1}(x_1)\,  \int \intd{x_2}  f_{\epsilon,2}(x_2\,|\, x_1) \, \ldots \int \intd{x_m} f_{\epsilon,m}(x_m\,|\, x^{m-1}_1)\, \nonumber\\
			&\quad \qquad\int \intd{x_{m+1} } f_{\epsilon, m+1}(x_{m+1} \,|\, x^{m}_1) \nonumber\\
			&\quad \qquad \ldots \int \intd{x_{n} } \big[ f_{m+1}(x_{n} \,|\, x^{n-1}_{n-m}) - f_{\epsilon,m+1}(x_{n} \,|\, x^{n-1}_{n-m}) \big] \;\beta^{r,s}_q( \cK( n^{1/p} x_1^n) ). \nonumber
\end{align}
Each integral in the sum can be interpreted as a difference between the expectation of two persistent Betti numbers of two coupled processes $(Z'_{t,\cdot}, \wt Z_{t,\cdot})$ for $t\in\{1,\ldots,n\}$. We explain this coupling in three steps and refer to the term in \eqref{EQ:ConvergenceExpectationContinuous8}, which shows the general situation. First, the $t$th coupling starts with $\wt{Z}_{t,1}={Z'}_{t,1},\ldots,\wt{Z}_{t,t-1}={Z'}_{t,t-1}$; so $Z'_{t,\cdot}$ and $\wt Z_{t,\cdot}$ have the same distribution as the stationary discrete Markov chain $\wt{X}$ (with the densities $f_{\epsilon,\cdot})$ from time 1 to $t-1$. 

Second, at time $t$, we simulate a random variable $Z'_{t,t}$ using the conditional density $f_i$ (where the index $i$ depends on the position of $t$). Also at time $t$, we simulate a random variable $\wt{Z}_{t,t}$ using the conditional density $f_{\epsilon,i}$. Note that $Z'_{t,t}$ and $\wt{Z}_{t,t}$ can be coupled such that
\begin{align}\label{EQ:ConvergenceExpectationContinuous9}
	2 \p( Z'_{t,t} \neq \wt{Z}_{t,t} | \wt{Z}_{t,1},\ldots,\wt{Z}_{t,t-1} ) \le \frac{1}{2} \int_{[0,1]^p } |f_{i}(y| \wt{Z}_{t,1}^{t,t-1}) - f_{\epsilon,i}(y| \wt{Z}_{t,1}^{t,t-1}) | \intd{y} \le \frac{\epsilon}{2} \quad a.s.
\end{align}
because of the choices in \eqref{EQ:ConvergenceExpectationContinuous1}; we refer to \cite{den2012probability} for an abstract maximal coupling result on Polish spaces.

Third, we find two chains $Z'_{t,j}$ and $\wt{Z}_{t,j}$, $j=t+1,\ldots$, using the conditional densities $f_i$ such that the single elements at time $j\le n$ satisfy $\p( Z'_{t,j} \neq \wt{Z}_{t,j} | \wt{Z}_{t,1},\ldots, \wt{Z}_{t,t-1}, \wt{Z}_{t,t}, Z'_{t,t}) \le \Gamma^{(n)}_{t,j}$. This last inequality follows from the properties of the Marton coupling, see \eqref{EQ:ConvergenceExpectationContinuous2}.

In the following, we will use the abbreviation $\wt Z^{t,j}_{t,i}$ for the vector $(\wt Z_{t,i},\ldots,\wt Z_{t,j})$; we use the notation in the same spirit for $Z'_{t,\cdot}$.
Using the above coupling, we see that \eqref{EQ:ConvergenceExpectationContinuous3} is at most
\begin{align}
		& n^{-1} \sum_{t=1}^{n} \E{ \big| \beta^{r,s}_q( \cK( n^{1/p} {Z'}_{t,1}^{t,n} ) ) - \beta^{r,s}_q( \cK( n^{1/p} {\wt Z}_{t,1}^{t,n} ) ) \big| } \label{EQ:ConvergenceExpectationContinuous10}
\end{align}
So for each $t$ the point clouds differ at most in the points ${Z'}_{t,t}^{t,n}$ resp. ${\wt Z}_{t,t}^{t,n}$ and we can always transform one point cloud in the other in $n-t+1$ steps.

Regarding \eqref{EQ:ConvergenceExpectationContinuous10}, we show that there is a $C\in\R_+$ such that for each $t\in \{1,\ldots,n\}$ and for each $n\in\N$ the coupling $(Z'_{t,\cdot}, \wt{Z}_{t,\cdot})$ satisfies
\begin{align}\label{EQ:ConvergenceExpectationContinuous13}
			\E{ \big| \beta^{r,s}_q( \cK( n^{1/p} {Z'}_{t,1}^{t,n} ) ) - \beta^{r,s}_q( \cK( n^{1/p} \wt{Z}_{t,1}^{t,n} ) ) \big|} \le C \epsilon^{1/\ol w}.
\end{align}
Define the coupling time between $({Z'}_{t,j})_j$ and $(\wt{Z}_{t,j})_j$ by
\begin{align*}
			\tau_c(t) = \inf\{j\ge t:  Z'_{t,j} = \wt{Z}_{t,j}, Z'_{t,j-1} = \wt{Z}_{t,j-1} \ldots, Z'_{t,j-m+1} = \wt{Z}_{t,j-m+1}	\},
\end{align*}
i.e., for all $j\ge \tau_c(t)$ the chains evolve again in lockstep, viz., $Z'_{t,j} = \wt{Z}_{t,j}$ for $j\ge \tau_c(t)$. Note that given $Z'_{t,t} \neq \wt{Z}_{t,t}$, we have $\tau_c(t) \ge t+m$.

The coupling times $\tau_c$ admit a tail bound which involves the coefficients from the Marton coupling as follows: If $u\ge t+m$, then
\begin{align}\begin{split}\label{EQ:ConvergenceExpectationContinuous14}
			\p( \tau_c(t) > u \,|\, Z'_{t,t}\neq \wt{Z}_{t,t} ) &\le \p( Z'_{t,u}\neq \wt{Z}_{t,u} \text{ or }  Z'_{t,u-1}\neq \wt{Z}_{t,u-1} \\
			&\quad\qquad \text{ or } \ldots \text{ or } Z'_{t,u-m+1} \neq \wt{Z}_{t,u-m+1} \,|\, Z'_{t,t}\neq \wt{Z}_{t,t}  ) \\
			&\le \p( Z'_{t,u}\neq \wt{Z}_{t,u} \,|\, Z'_{t,t}\neq \wt{Z}_{t,t}  )  + \p(  Z'_{t,u-1}\neq \wt{Z}_{t,u-1} \,|\, Z'_{t,t}\neq \wt{Z}_{t,t}  )   \\
			&\quad\qquad + \ldots + \p( Z'_{t,u-m+1} \neq \wt{Z}_{t,u-m+1} \,|\, Z'_{t,t}\neq \wt{Z}_{t,t}  ) \\
			&\le \Gamma^{(n)}_{t,u} + \Gamma^{(n)}_{t,u-1} + \ldots + \Gamma^{(n)}_{t,u-m+1}.
\end{split}\end{align}
Since the coefficients of the Marton coupling satisfy \eqref{EQ:ConvergenceExpectationContinuous2}, this shows that the moments of the coupling times (when conditioned on $\{ Z'_{t,t}\neq \wt{Z}_{t,t} \}$) are uniformly bounded: We have for $\delta\ge 0$
\begin{align}
	&\int_t^\infty (u-t)^\delta \ \p( \tau_c(t) > u \ | \{ Z'_{t,t}\neq \wt{Z}_{t,t}  \} ) \intd{u} \nonumber\\
	&\le \sum_{u=t}^\infty (u+1-t)^\delta \ \p( \tau_c(t) > u \ | \{ Z'_{t,t}\neq \wt{Z}_{t,t}  \} ) \nonumber\\ 
	&\le m^{1+\delta} + \sum_{u=m}^\infty (u+1-t)^\delta \ \big( \Gamma^{(n)}_{t,u} + \ldots + \Gamma^{(n)}_{t,u-m+1} \big).\label{EQ:ConvergenceExpectationContinuous15}
\end{align}
We begin our considerations with the restriction to the event $\{ \tau_c(t)\le n \}$ for $t\in\{1,\ldots,n\}$:
\begin{align}
				&\E{ \1{\tau_c(t) \le n } \Big| \beta^{r,s}_q( \cK( n^{1/p} {Z'}_{t,1}^{t,n} ) ) - \beta^{r,s}_q( \cK( n^{1/p} \wt{Z}_{t,1}^{t,n} ) ) \Big| } \nonumber \\
				\begin{split}\label{EQ:ConvergenceExpectationContinuous16}
				&= \sum_{j=t+m}^{n} \mathbb{E}\Big[ \1{\tau_c(t) = j } \1{ {Z'}_{t,t} \neq \wt{Z}_{t,t} } \Big| \beta^{r,s}_q( \cK( n^{1/p} \{ \wt{Z}_{t,1}^{t,t-1}, {Z'}_{t,t}, {Z'}_{t,t+1}^{t,j-1}, {Z'}_{t,j}^{t,n} \}) ) \\
				&\qquad\qquad\qquad\qquad\qquad\qquad - \beta^{r,s}_q( \cK( n^{1/p} \{ \wt{Z}_{t,1}^{t,t-1}, \wt{Z}_{t,t}, \wt{Z}_{t,t+1}^{t,j-1}, {Z'}_{t,j}^{t,n} \}) ) \Big| \Big], \end{split}
\end{align}
where we use for the last equality that $\tau_c(t)$ can be at least $t+m$ conditional on the event $\{ {Z'}_{t,t} \neq \wt{Z}_{t,t}  \}$. We study the expectations in \eqref{EQ:ConvergenceExpectationContinuous16}. First we apply the Geometric Lemma to obtain
\begin{align}
				&\mathbb{E}\Big[ \Big| \beta^{r,s}_q( \cK( n^{1/p} \{ \wt{Z}_{t,1}^{t,t-1}, {Z'}_{t,t}, {Z'}_{t,t+1}^{t,j-1}, {Z'}_{t,j}^{t,n} \}) ) \nonumber \\
				&\quad - \beta^{r,s}_q( \cK( n^{1/p} \{ \wt{Z}_{t,1}^{t,t-1}, \wt{Z}_{t,t}, \wt{Z}_{t,t+1}^{t,j-1}, {Z'}_{t,j}^{t,n} \}) ) \Big| \1{ \tau_c(t) = j , {Z'}_{t,t} \neq \wt{Z}_{t,t} }   \Big] \nonumber \\
				\begin{split}\label{EQ:ConvergenceExpectationContinuous17}
				&\le \sum_{\ell=1}^{j-t} \sum_{k=q}^{q+1} \E{ K_k\Big( \wt{Z}_{t,1}^{t,t-1+\ell} \cup {Z'}_{t,t-1+\ell}^n, n^{-1/p} s; {Z'}_{t,t+\ell-1}	\Big)  \1{ \tau_c(t) = j , {Z'}_{t,t} \neq \wt{Z}_{t,t} }  } \\
			&\quad + 	\sum_{\ell=1}^{j-t} \sum_{k=q}^{q+1} \E{ K_k\Big( \wt{Z}_{t,1}^{t,t-1+\ell} \cup {Z'}_{t,t-1+\ell}^n, n^{-1/p} s; \wt{Z}_{t,t+\ell-1}	\Big)  \1{ \tau_c(t) = j , {Z'}_{t,t} \neq \wt{Z}_{t,t} }  }.
			\end{split}
\end{align}
Consider the $w$th moment of a simplex count $K_k$ for some $\ell$. For this purpose denote by $(Y_0,\ldots, Y_n)$ the data $(\wt{Z}_{t,1},\ldots,\wt Z_{t,t-1+\ell}, {Z'}_{t,t-1+\ell},\ldots,Z'_n)$. Then one finds with elementary combinatorial arguments that the $w$th moment of the simplex count in the first line in \eqref{EQ:ConvergenceExpectationContinuous17} is at most
\begin{align}
	&\E{ K_k\Big( \wt{Z}_{t,1}^{t,t-1+\ell} \cup {Z'}_{t,t-1+\ell}^n, n^{-1/p} s; {Z'}_{t,t+\ell-1}	\Big)^w } \nonumber \\
	&= \mathbb{E}\Biggl[ \sum_{ \substack{ u^{(1)} \in \{0,\ldots, n\}^{k}:\\ t-1+\ell \neq u^{(1)}_\ell \neq u^{(1)}_{\ell'} } } \; \prod_{\ell=1}^{k} \1{ d( {Z'}_{t,t-1+\ell} ,Y_{u^{(1)}_\ell} ) \le 2 n^{-1/p} s }  \nonumber \\
	&\qquad\qquad\qquad \ldots \sum_{ \substack{ u^{(w)} \in \{0,\ldots, n\}^{k}:\\ t-1+\ell \neq u^{(w)}_\ell \neq u^{(w)}_{\ell'} } } \; \prod_{\ell=1}^{k} \1{ d(Z'_{t,t-1+\ell},Y_{u^{(w)}_\ell} ) \le 2 n^{-1/p} s } \Biggl] \nonumber \\
	\begin{split}\label{EQ:ConvergenceExpectationContinuous18}
	&\le \sum_{v=k}^{wk} \binom{n}{v} \sum_{\substack{u_1,\ldots,u_v \ge 1 \\ \sum_{u_i = w k}} } \frac{(wk)!}{ u_1! \ldots u_v!} \ \mathbb{E} \big[ \1{d(Z'_{t,t-1+\ell}, Y_{i_1})\le  2 n^{-1/p} s  } \\
	&\qquad\qquad\qquad \ldots \1{d(Z'_{t,t-1+\ell}, Y_{i_v})\le  2 n^{-1/p} s  }		\big]
	\end{split}
\end{align}
for pairwise disjoint indices $i_\ell$. The last inequality can be derived as follows: The number of different observations $v$ is in $ [k,wk]$. Given $v$ pairwise different indices $i_1,\ldots,i_v$ (observations $Y_{i_1},\ldots,Y_{i_v}$) each occurs with multiplicity $u_i$ such that $\sum_{i=1}^v u_i = wk$.

Applying finally the same reasoning as in the proof of Lemma~\ref{L:SimplicesAndMeasure} shows that \eqref{EQ:ConvergenceExpectationContinuous18} is bounded above by a universal constant $C_{s,p,w}$, which depends on $s,p,w$ but not on $n,\ell,k$. Clearly, the $w$th moment of the simplex count in the second line in \eqref{EQ:ConvergenceExpectationContinuous17} is at most $C_{s,p,w}$, too.

We can now return to \eqref{EQ:ConvergenceExpectationContinuous16}. Set $C'_{w} = (\sum_{j=1}^\infty j^{-aw} )^{1/w}$ for some $a>1/w$. Then relying on the coupling result from \eqref{EQ:ConvergenceExpectationContinuous9}, \eqref{EQ:ConvergenceExpectationContinuous16} is at most
\begin{align*}
		&4 C_{s,p,w}^{1/w} \sum_{j=t+m}^{n} (j-t) \ \mathbb{E}\Big[ \1{\tau_c(t) = j } \1{ {Z'}_{t,t} \neq \wt{Z}_{t,t} } \Big]^{1/\ol w}  \\
		& \le 4 C_{s,p,w}^{1/w} \sum_{j=t+m}^{n} (j-t) \ \p( \tau_c(t) = j | {Z'}_{t,t} \neq \wt{Z}_{t,t} )^{1/\ol w} \ \p( {Z'}_{t,t} \neq \wt{Z}_{t,t} )^{1/\ol w}  \\
		&\le 4 C_{s,p,w}^{1/w} \ (\epsilon/4)^{1/\ol w} \ \Big( \sum_{j=t+m}^{n} (j-t)^{-a w} \Big)^{1/w} \ \Big( \sum_{j=t+m}^{n} (j-t)^{\ol w(a+1) } \ \p( \tau_c(t) = j | {Z'}_{t,t} \neq \wt{Z}_{t,t} ) \Big)^{1/\ol w} \\
		&\le 4 C_{s,p,w}^{1/w} \ C'_w \ (\epsilon/4)^{1/\ol w} \ \E{ (\tau_c(t) - t)^{\ol w (a+1) } \ | \ \{ {Z'}_{t,t} \neq \wt{Z}_{t,t} \}  }^{1/\ol w} \\
		&= 4 C_{s,p,w}^{1/w} \ C'_w \ (\epsilon/4)^{1/\ol w} \ \Big\{ \int_t^\infty (u-t)^{\ol w (a+1) - 1} \ \p( \tau_c(t)  > u \ | \ {Z'}_{t,t} \neq \wt{Z}_{t,t} ) \ \intd{u} \Big\}^{1/\ol w}.
\end{align*}
Relying on \eqref{EQ:ConvergenceExpectationContinuous2},  \eqref{EQ:ConvergenceExpectationContinuous14} and \eqref{EQ:ConvergenceExpectationContinuous15} this last term is of order $\epsilon^{1/\ol w}$. This shows that  \eqref{EQ:ConvergenceExpectationContinuous16} is of order $\epsilon^{1/w}$ uniformly in $t\in\{1,\ldots,n\}$ and $n$.

In order to complement the considerations following \eqref{EQ:ConvergenceExpectationContinuous16}, it remains to consider the restriction to the event $\{ \tau_c(t) > n \}$ for $t\in\{1,\ldots,n\}$. Here we need additionally to consider the average over all $t$
\begin{align}\label{EQ:ConvergenceExpectationContinuous19}
				&n^{-1} \sum_{t=1}^n \E{ \1{\tau_c(t) > n } \Big| \beta^{r,s}_q( \cK( n^{1/p} {Z'}_{t,1}^{t,n} ) ) - \beta^{r,s}_q( \cK( n^{1/p} \wt{Z}_{t,1}^{t,n} ) ) \Big| } 
\end{align}
Carrying out similar calculations as in \eqref{EQ:ConvergenceExpectationContinuous17} and \eqref{EQ:ConvergenceExpectationContinuous18} it is not difficult to see that \eqref{EQ:ConvergenceExpectationContinuous19} is at most
\begin{align*}
	&4 C_{s,p,w}^{1/w} \ n^{-1} \sum_{t=1}^n (n-t) \ \p( \tau_c(t) > n  \ | \ {Z'}_{t,t} \neq \wt{Z}_{t,t} )^{1/\ol w} \ \p( {Z'}_{t,t} \neq \wt{Z}_{t,t} )^{1/\ol w} \\
	&\le 4 C_{s,p,w}^{1/w} \ (\epsilon/4)^{1/\ol w} \ n^{-1} \sum_{t=1}^n (n-t) \ \p( \tau_c(t) - t > n - t \ | \  {Z'}_{t,t} \neq \wt{Z}_{t,t} )^{1/\ol w}\\
	&\le 4 C_{s,p,w}^{1/w} \ (\epsilon/4)^{1/\ol w} \ n^{-1} \sum_{u=0}^\infty u \ \p( \tau_c(t) - t > u \ | \  {Z'}_{t,t} \neq \wt{Z}_{t,t} )^{1/\ol w}.
\end{align*}
Using once more the result in \eqref{EQ:ConvergenceExpectationContinuous14},  \eqref{EQ:ConvergenceExpectationContinuous15}  and \eqref{EQ:ConvergenceExpectationContinuous2} yields directly that this last upper bound vanishes.

Combining these results yields \eqref{EQ:ConvergenceExpectationContinuous13}. This completes the proof.
\end{proof}

\begin{proof}[Proof of Corollary~\ref{C:VagueConvergence}]
It is shown in Proposition~3.4 in \cite{hiraoka2018limit} that the pointwise convergence of persistent Betti numbers implies the vague convergence of the corresponding sequence of persistent diagrams.
\end{proof}

\subsection{Technical details on Section~\ref{Sec_ExtensionsToRandomFields}}
\begin{proof}[Proof of Theorem~\ref{T:ConvergenceExpectationDiscreteRandomField}]
The statement follows immediately from Theorem~\ref{T:ConvergenceExpectationDiscrete}.
\end{proof}

\begin{proof}[Proof of Theorem~\ref{T:ConvergenceExpectationMarkovRandomField}]
The proof is very similar to that of Theorem~\ref{T:ConvergenceExpectationContinuous} and we only study the main differences in detail. First, we construct an $\epsilon$-approximation $\wt{X}$ of $X$. For this purpose we consider the joint distribution of $\{X_u: u\le (1,\ldots,1)\}$, which is completely determined by the joint density $g\colon [0,1]^{2^d p}\to (0,\infty)$. Let $\epsilon>0$ and choose a discrete approximation $g_\epsilon\colon [0,1]^{2^d p}\to (0,\infty)$ of $g$ such that conditional densities $f_{\epsilon,s}$ ($s\in\{0,1\}^d$) are derived from $g_\epsilon$ in the same spirit as in the proof of Theorem~\ref{T:ConvergenceExpectationContinuous}; we refer to Figure~\ref{fig:FactorizationScheme}. These densities are strictly positive and satisfy
\[
			| f_{\epsilon,s}( x | y ) - f_s (x|y) |\le \epsilon, \quad \forall x\in[0,1]^p, \quad \forall y\in [0,1]^{\|s\|_1 p}, \quad \forall s\in\{0,1\}^d\setminus \{ (0,\ldots,0) \}
\]
as well as $\|f_{\epsilon,(0,\ldots,0)}- f_{(0,\ldots,0)}\|_\infty \le \epsilon$.
(This requirement is the analog to \eqref{EQ:ConvergenceExpectationContinuous1}). Obviously, the discrete (conditional) densities $f_{\epsilon,s}$ determine the random field $\wt{X}$ completely. Also, due to the blocked structure of the densities $f_{\epsilon,s}$ and the condition from \eqref{E:DecayDependenceMRF}, the random field $\wt{X}$ satisfies the requirements of Theorem~\ref{T:ConvergenceExpectationDiscreteRandomField}. Reasoning as in \eqref{EQ:ConvergenceExpectationContinuous3} to \eqref{EQ:ConvergenceExpectationContinuous5}, it is sufficient to study the difference
\begin{align}\label{EQ:ConvExpMRF1}
			\pi(N)^{-1} \E{ \beta_q^{r,s}(\cK( \pi(N)^{1/p}  \mX_N ) ) } - \pi(N)^{-1} \E{ \beta_q^{r,s}(\cK( \pi(N)^{1/p}  \wt{\mX}_N ) ) }
\end{align}
for an arbitrary but fixed $N\in\N^d$.

We use the same expansion for this difference as in the case of Markov chains, see \eqref{EQ:ConvergenceExpectationContinuous6}, \eqref{EQ:ConvergenceExpectationContinuous7} and \eqref{EQ:ConvergenceExpectationContinuous8}. But this time we use the ordering $>_d$ for the expansion. We obtain for each $u\in\N^d$ a coupling $( (Z'_{u,v}, \wt{Z}_{u,v}): v\in\N^d )$ with the properties
\begin{align*}
			(i)\quad & Z'_{u,w} = \wt{Z}_{u,w}, \quad \forall w <_d u; \text{ these are distributed according to the } f_{\epsilon,s}, \; s\in\{0,1\}^d. \\
			(ii)\quad& \text{For the position $u\in\N^d$ let $s\in\{0,1\}^d$ be the associated index. Then}\\
			\quad & Z'_{u,u} \text{ is distributed according to $f_s$, } \wt{Z}_{u,u} \text{ is distributed according to} f_{\epsilon,s} \text{ and }\\
			\quad& \p(  Z'_{u,u} \neq \wt{Z}_{u,u} \,|\, Z'_{u,w} = \wt{Z}_{u,w}, \; \forall w <_d u) \le \epsilon/ 4 \quad a.s.\\
			(iii)\quad& \p( Z'_{u,v} \neq \wt{Z}_{u,v} | Z'_{u,w} = \wt{Z}_{u,w}, \; w <_d u, Z'_{u,u}, \wt{Z}_{u,u} ) \le \Gamma^{(\infty)}_{u,v} \quad a.s., \quad \forall v >_d u, \\
			&\qquad \text{and the } Z'_{u,v}, \wt{Z}_{u,v} \text{ are distributed according to the } f_{s}, \; s\in\{0,1\}^d \text{ for all } v >_d u.
\end{align*}
Consequently, using that $\wt{Z}_{u, v} = {Z'}_{u, v}$, for all $v <_d u$, we can write the difference in \eqref{EQ:ConvExpMRF1} as
\begin{align}\begin{split}\label{EQ:ConvExpMRF2}
			&\pi(N)^{-1} \sum_{\substack{u\in\N^d: u\le N}} \mathbb{E} \Big[ \beta^{r,s}_q( \cK( \pi(N)^{1/p} \{{Z'}_{u, v }: v\le N  \}) ) \\
			&\qquad\qquad \qquad\qquad \quad - \beta^{r,s}_q( \cK( \pi(N)^{1/p} \{ \wt{Z}_{u, v}: v\le N \}) )  \Big].
\end{split}\end{align}
Given a location $u$ and a coupling $(Z'_{u,\cdot}, \wt{Z}_{u,\cdot})$, we define the coupling time
\begin{align*}
		\tau_c(u) &\coloneqq \inf\big\{	k\ge 0 \,|\, Z'_{u,v} = \wt{Z}_{u,v}, \forall v\in\N^d \text{ such that } \|u-v\|_{\max} = k \text{ and } v\ge u \big\}.
\end{align*}
So $\tau_c(u)$ is determined by the causal dependence pattern which is derived from the factorization of the joint distribution according to the ordering $>_d$. Note that both random fields $Z'_{u,\cdot}$ and $\wt{Z}_{u,\cdot}$ move in lockstep after $\tau_c(u)$.

Consider the tail of the coupling time $\tau_c$ at location $u$
\begin{align}
				&\p( \tau_c(u) > k \,|\, Z'_{u,u} \neq \wt{Z}_{u,u} ) \nonumber \\
				&= \p(  Z'_{u,v} \neq \wt{Z}_{u,v} \text{ for one } v\ge u \text{ with } \|v-u\|_{\max} = k  \,|\,  Z'_{u,u} \neq \wt{Z}_{u,u} ) \nonumber\\
				&\le \sum_{\substack{v: \; v\ge u\\ \|v-u\|_{\max} = k}} \p(  Z'_{u,v} \neq \wt{Z}_{u,v} \text{ for one } v\ge u \text{ with } \|v-u\|_{\max} = k  \,|\, Z'_{u,u} \neq \wt{Z}_{u,u} ) \nonumber \\
				&\le  \sum_{\substack{v: \; v\ge u\\ \|v-u\|_{\max} = k}} \Gamma^{(\infty)}_{u,v} \le c_d k^{d-1} \sup_{\substack{v: \; v\ge u\\ \|v-u\|_{\max} = k}} \Gamma^{(\infty)}_{u,v} \label{EQ:ConvExpMRF3}
\end{align}
for a constant $c_d\in\R_+$, which depends on $d$ but not on $u,k,N$.

Choose $ w \in \N$ such that we have with the abbreviation $\ol w = w/(w-1)$ that $(\delta/3+1)/\ol w - d > 1/w>0$, which is possible because by assumption $\delta> 3(d-1)$.

In the following, we will consider such a single difference in \eqref{EQ:ConvExpMRF2} and show that it is of order $\epsilon^{1/\ol w}$ uniformly in $N$; the calculations follow in a similar spirit as in the proof of Theorem~\ref{T:ConvergenceExpectationContinuous}, see \eqref{EQ:ConvergenceExpectationContinuous16} to \eqref{EQ:ConvergenceExpectationContinuous18}, so we omit some details. Clearly, we can restrict our considerations to the event $\{Z'_{u,u} \neq \wt{Z}_{u,u} \}$. Again, use $C'_{w} = (\sum_{j=1}^\infty j^{-aw} )^{1/w}$ but this time for $a\in (1/w, (\delta/3+1)/\ol w - d)$. Then using a similar bound on simplex counts, there is a constant ${\ol C}_{s,p,d,w}$ such that
\begin{align}
		&\mathbb{E} \Big[ |\beta^{r,s}_q( \cK( \pi(N)^{1/p} \{{Z'}_{u, v }: v\le N  \}) )  - \beta^{r,s}_q( \cK( \pi(N)^{1/p} \{ \wt{Z}_{u, v}: v\le N \}) ) | \Big] \nonumber \\
		&\le \sum_{k=1}^{ \|N-u\|_{\max} } \mathbb{E} \Big[ \1{ \tau_c(u) = k, Z'_{u,u} \neq \wt{Z}_{u,u}  } \Big| \beta^{r,s}_q( \cK( \pi(N)^{1/p} \{{Z'}_{u, v }: v\le N  \}) ) \nonumber \\
				&\qquad\qquad\qquad - \beta^{r,s}_q( \cK(  \pi(N)^{1/p} \{ \wt{Z}_{u, v}: v\le N \}) ) \Big| \Big] \nonumber \\
				&\quad + \mathbb{E} \Big[ \1{ \tau_c(u) > \|N-u\|_{\max}, Z'_{u,u} \neq \wt{Z}_{u,u}  } \Big| \beta^{r,s}_q( \cK( \pi(N)^{1/p} \{{Z'}_{u, v }: v\le N  \}) ) \nonumber \\
				&\qquad\qquad\qquad - \beta^{r,s}_q( \cK(  \pi(N)^{1/p} \{ \wt{Z}_{u, v}: v\le N \}) ) \Big| \Big] \nonumber \\
				\begin{split} \label{EQ:ConvExpMRF4}
				&\le 4 {\ol C}_{s,p,d,w}^{1/w} \sum_{k=1}^{ \|N-u\|_{\max} }  k^d \ \p( \tau_c(u) = k \ | \ Z'_{u,u} \neq \wt{Z}_{u,u} )^{1/\ol w} \ \epsilon^{1/\ol w} \\
				&\quad + 4 {\ol C}_{s,p,d,w}^{1/w} \ \| N-u \|_{\max}^d \ \p( \tau_c(u) >  \|N-u\|_{\max} \ | \ Z'_{u,u} \neq \wt{Z}_{u,u} )^{1/\ol w} \epsilon^{1/\ol w}
				\end{split}
\end{align}
for all $u\le N$ and for all $N$.
Regarding the sum in \eqref{EQ:ConvExpMRF4}, we use the definition of $C'_w$ to see
\begin{align}
	&\sum_{k=0}^{ \|N-u\|_{\max} } k^d \ \p( \tau_c(u) = k \,|\, Z'_{u,u} \neq \wt{Z}_{u,u} )^{1/\ol w} \nonumber \\
	&\le \Big(\sum_{k=0}^{ \|N-u\|_{\max} } k^{-aw} \Big)^{1/w} \ \Big( \sum_{k=0}^{ \|N-u\|_{\max} } k^{(a+d)\ol w} \ \p( \tau_c(u) = k \,|\, Z'_{u,u} \neq \wt{Z}_{u,u} ) \Big)^{1/\ol w} \nonumber \\
	&\le C'_w \ \E{  \tau_c(u)^{ (a+d)\ol w} \,|\, \{ Z'_{u,u} \neq \wt{Z}_{u,u} \} }^{1/\ol w} \nonumber \\
	&=  C'_w \Big( \int_{0}^\infty s^{(a+d)\ol w - 1 } \p ( \tau_c(u) > s \,|\, Z'_{u,u} \neq \wt{Z}_{u,u} ) \intd{s} \Big)^{1/\ol w}. \label{EQ:ConvExpMRF5}
\end{align}
Applying the upper bound from \eqref{EQ:ConvExpMRF3} together with the condition from \eqref{E:DecayDependenceMRF} shows that \eqref{EQ:ConvExpMRF5} is uniformly bounded in $u$ and $N$. 

We use \eqref{EQ:ConvExpMRF4} together with \eqref{EQ:ConvExpMRF5} as well as \eqref{EQ:ConvExpMRF3} to give a bound on \eqref{EQ:ConvExpMRF2} (up to a universal multiplicative constant) as follows
\begin{align*}
		&\epsilon^{1/\ol w} \pi(N)^{-1} \sum_{u\in \N^d: u\le N} \big(1 +  \| N-u \|_{\max}^d \ \p( \tau_c(u) >  \|N-u\|_{\max} \ | \ Z'_{u,u} \neq \wt{Z}_{u,u} )^{1/\ol w}  \big) \\
		&\le \epsilon^{1/\ol w} + \epsilon^{1/\ol w} \ \pi(N)^{-1}  \sum_{k=0}^\infty \Big( \sum_{u\in \N^d: u\le N} \1{ \|N-u\|_{\max} = k } \p( \tau_c(u) > k \ | \ Z'_{u,u} \neq \wt{Z}_{u,u} )^{1/\ol w} \Big)\\
		&\le \epsilon^{1/\ol w} + \epsilon^{1/\ol w} \ \pi(N)^{-1}  \sum_{k=0}^\infty c_d k^{d-1} \ k^d \ \p( \tau_c(u) > k \ | \ Z'_{u,u} \neq \wt{Z}_{u,u} )^{1/\ol w} 
\end{align*}
because $(\tau_c(u) : u)$ is stationary. We can now repeat the calculations which lead to \eqref{EQ:ConvExpMRF5} to see that this last sum satisfies
\begin{align*}
	&\sum_{k=0}^\infty c_d k^{2d-1} \p( \tau_c(u) > k \ | \ Z'_{u,u} \neq \wt{Z}_{u,u} )^{1/\ol w} \\
	&\le C'_w \Big( \int_{0}^\infty s^{(a+[2d-1])\ol w - 1 } \p ( \tau_c(u) > s \,|\, Z'_{u,u} \neq \wt{Z}_{u,u} ) \intd{s} \Big)^{1/\ol w}. 
\end{align*}
Relying once more on \eqref{EQ:ConvExpMRF3} and \eqref{E:DecayDependenceMRF} shows then that this integral is uniformly bounded in $u$ and $N$.
This shows that \eqref{EQ:ConvExpMRF2} is of order $\epsilon^{1/\ol w}$. Consequently, \eqref{EQ:ConvExpMRF1} is of order $\epsilon^{1/\ol w}$, too.
\end{proof}

\appendix 
\section{McDiarmid inequalities for Marton couplings}\label{Appendix1}
In this section we study McDiarmid inequalities for Marton couplings. Notable contributions to this topic are \cite{samson2000concentration}, \cite{chazottes2007concentration}, \cite{kontorovich2008concentration}, \cite{redig2009concentration}. We shall first state a result of \cite{paulin2015concentration} who uses Marton couplings to characterize the dependence of the data.

\begin{definition}[Partition]\label{Def:Partition}
A partition of a random vector $Z=(Z_1,\ldots,Z_N)$ is a deterministic division of $Z$ into random variables $\hat{Z}_i$, $i=1,\ldots,n$ for some $n\le N$ such that the set $\{Z_1,\ldots,Z_N\}$ is partitioned by $(\hat{Z}_i)_{i=1,\ldots,n}$. Denote the number of elements of $\hat{Z}_i$ by $s(\hat{Z}_i)$ and write $s(\hat{Z})$ for the size of the partition which is $\max_{i=1,\ldots,n} s(\hat{Z}_i)$.
\end{definition}

\begin{theorem}[McDiarmid's inequality, \cite{paulin2015concentration}]\label{Thrm:McDiarmidIneq}
Let $Z=(Z_1,\ldots,Z_N)$ be a random variable in $\Lambda = \Lambda_1\times\cdots\times\Lambda_N$. Assume that $Z$ admits a partitioning $\hat{Z}=(\hat{Z}_1,\ldots,\hat{Z}_n)$ which allows a Marton coupling with mixing matrix $\Gamma\in\R^{n\times n}$. Let $\phi: \Lambda\rightarrow \R$ be Lipschitz continuous w.r.t.\ the Hamming distance, i.e., there is a $c= (c_1,\ldots,c_N)\in\R^N$ such that
\begin{align}\label{Eq:McDiarmidHammingCondition}
				|\phi(x)-\phi(y)|\le \sum_{j=1}^N c_j \1{x_j\neq y_j}, \quad x,y\in \Lambda.
\end{align}
Set $\cI_i \coloneqq \{j=1,\ldots,N: Z_j \in \hat{Z}_i \}$ and $C_i(c) \coloneqq \sum_{j\in \cI_i} c_j$ for $i=1,\ldots,n$. Then
\begin{align}\label{Eq:McDiarmidIneqLaplace}
		\log \E{	\exp\left(	\gamma \left(	\phi(Z)-\E{\phi(Z)}	\right)	\right) } \le \gamma^2 \norm{ \Gamma C(c) }^2 / 8.
\end{align}
In particular,
\begin{align}\label{Eq:McDiarmidIneq}
		\p\left( \left| \phi(Z)-\E{\phi(Z)} \right| \ge t \right) \le 2 \exp\left( - \frac{2t^2}{ \norm{ \Gamma C(c) }^2}		\right) \le 2 \exp\left( - \frac{2t^2}{ \norm{ \Gamma}^2 \norm{c }^2 s(\hat{Z}) }		\right).
\end{align}
\end{theorem}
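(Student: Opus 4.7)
The plan is to follow the classical Doob martingale argument, but with the Hamming-Lipschitz control transported through the Marton coupling. First I would set $\mathcal{F}_i = \sigma(\hat{Z}_1,\ldots,\hat{Z}_i)$ (with $\mathcal{F}_0$ trivial) and define the Doob martingale $M_i = \E{\phi(Z)\mid \mathcal{F}_i}$, so that
\[
\phi(Z) - \E{\phi(Z)} = \sum_{i=1}^n D_i, \qquad D_i = M_i - M_{i-1}.
\]
The goal is then to bound the conditional moment generating function of each $D_i$ and chain the bounds multiplicatively.

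The heart of the argument is a deterministic bound on the $\mathcal{F}_{i-1}$-conditional oscillation of $D_i$. Given a fixed past $\hat{z}_1,\ldots,\hat{z}_{i-1}$ and two candidate values $\hat{z}_i,\hat{z}_i'$ for $\hat{Z}_i$, invoke the Marton coupling $(Z^{(\hat{z}_1,\ldots,\hat{z}_i,\hat{z}_i')},Z'^{(\hat{z}_1,\ldots,\hat{z}_i,\hat{z}_i')})$ to write the increment as
\[
\E{\phi(Z)\mid \hat{Z}_1^i=(\hat{z}_1,\ldots,\hat{z}_i)} - \E{\phi(Z)\mid \hat{Z}_1^i=(\hat{z}_1,\ldots,\hat{z}_i')} = \E{\phi(Z^{(\cdot)}) - \phi(Z'^{(\cdot)})}.
\]
The Hamming-Lipschitz hypothesis \eqref{Eq:McDiarmidHammingCondition} grouped along the partition gives
\[
\bigl|\phi(Z^{(\cdot)}) - \phi(Z'^{(\cdot)})\bigr| \le \sum_{j=i}^n \sum_{\ell\in\mathcal{I}_j} c_\ell \,\mathds{1}\{Z^{(\cdot)}_\ell \neq Z'^{(\cdot)}_\ell\},
\]
and since a disagreement at coordinate $\ell\in\mathcal{I}_j$ forces disagreement at the partition level $\hat{Z}_j$, taking expectations and applying the definition of $\Gamma_{i,j}$ yields the bound $\sum_{j=i}^n \Gamma_{i,j} C_j(c) = (\Gamma C(c))_i$. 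Hence the conditional range of $D_i$ is at most $(\Gamma C(c))_i$. This is the main obstacle, as one has to line up carefully the partition structure, the Marton coupling definition (Definition~\ref{Def:MartonCoupling}), and the grouped Lipschitz constants $C_j(c)$.

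From there, Hoeffding's lemma applied conditionally gives
\[
\E{e^{\gamma D_i}\mid \mathcal{F}_{i-1}} \le \exp\!\Bigl(\tfrac{1}{8}\gamma^2 (\Gamma C(c))_i^2\Bigr),
\]
and iterating the tower property across $i=1,\ldots,n$ produces
\[
\E{\exp\bigl(\gamma(\phi(Z)-\E{\phi(Z)})\bigr)} \le \exp\!\Bigl(\tfrac{1}{8}\gamma^2 \norm{\Gamma C(c)}^2\Bigr),
\]
which is \eqref{Eq:McDiarmidIneqLaplace}. A Chernoff optimization in $\gamma$, combined with the symmetric bound for $-\phi$, yields the two-sided tail inequality in \eqref{Eq:McDiarmidIneq} with the factor~$2$.

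Finally, the cruder bound follows by submultiplicativity $\norm{\Gamma C(c)} \le \norm{\Gamma}\,\norm{C(c)}$ together with Cauchy--Schwarz applied inside each partition block:
\[
\norm{C(c)}^2 = \sum_{i=1}^n \Bigl(\sum_{j\in\mathcal{I}_i} c_j\Bigr)^2 \le \sum_{i=1}^n s(\hat{Z}_i)\sum_{j\in\mathcal{I}_i} c_j^2 \le s(\hat{Z})\,\norm{c}^2,
\]
which closes the proof.
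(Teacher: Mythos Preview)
Your proposal is correct and follows essentially the same approach as the paper: the paper also sets up the Doob martingale along the filtration $\cF_i=\sigma(\hat Z_1,\ldots,\hat Z_i)$, bounds the conditional oscillation of each increment via the Marton coupling to obtain $\sum_{k=i}^n C_k(c)\Gamma_{i,k}=(\Gamma C(c))_i$, and then applies the conditional Hoeffding-type lemma (stated there as a lemma of Devroye--Lugosi) before iterating and using submultiplicativity plus Cauchy--Schwarz for the final inequality. The only cosmetic difference is that the paper first rewrites the Hamming bound at the partition level and handles the sup/inf with an $\epsilon$-approximation, whereas you argue directly at the coordinate level and pass to the block via $\1{Z^{(\cdot)}_\ell\neq Z'^{(\cdot)}_\ell}\le \1{\hat Z^{(\cdot)}_j\neq \hat Z'^{(\cdot)}_j}$.
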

The proof uses the following lemma of \cite{devroye2012combinatorial}:
\begin{lemma}
Let $\cF$ be a sub-$\sigma$-algebra, $U,V,W$ random variables which satisfy $U\le V\le W$ $a.s.$ Moreover, $U,W$ are $\cF$-measurable and $\E{V|\cF}=0$. Then
$$ 
	\log \E{\exp\left(\gamma V \right)|\cF} \le \gamma^2 (U-W)^2/8, \qquad  \gamma\in\R.
$$
\end{lemma}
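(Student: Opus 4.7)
The statement is the conditional form of Hoeffding's classical lemma, so the plan is to follow the standard Hoeffding argument, but phrased pathwise in $\omega$ and then integrated against the conditional expectation $\E{\cdot\,|\,\cF}$. The key ingredients are: convexity of the exponential on $[U(\omega),W(\omega)]$, the fact that $U,W$ are $\cF$-measurable (hence treated as constants under $\E{\cdot\,|\,\cF}$), and the centering $\E{V\,|\,\cF}=0$, which forces $U\le 0\le W$ almost surely.

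First, note that on the $\cF$-measurable event $\{U=W\}$ we immediately have $V=U=W$ a.s., and the centering gives $V\equiv 0$, so both sides vanish. On the complementary event $\{U<W\}$ I write $V$ as a convex combination of the endpoints, namely
\[
V=\frac{W-V}{W-U}\,U+\frac{V-U}{W-U}\,W,\qquad U\le V\le W,
\]
and apply convexity of $x\mapsto e^{\gamma x}$ to obtain, pathwise,
\[
e^{\gamma V}\le \frac{W-V}{W-U}\,e^{\gamma U}+\frac{V-U}{W-U}\,e^{\gamma W}.
\]
Taking $\E{\cdot\,|\,\cF}$ on both sides and using $\E{V\,|\,\cF}=0$ together with the $\cF$-measurability of $U$ and $W$ yields
\[
\E{e^{\gamma V}\,|\,\cF}\le \frac{W\,e^{\gamma U}-U\,e^{\gamma W}}{W-U}.
\]

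Second, I introduce the $\cF$-measurable quantities $p\coloneqq -U/(W-U)\in[0,1]$ and $h\coloneqq \gamma(W-U)$. A direct rearrangement gives $\gamma U=-ph$ and the previous bound becomes
\[
\E{e^{\gamma V}\,|\,\cF}\le e^{-ph}\bigl[(1-p)+p\,e^{h}\bigr],
\]
so, setting $\psi(h)\coloneqq -ph+\log\bigl((1-p)+p\,e^{h}\bigr)$, it remains to prove the deterministic inequality $\psi(h)\le h^{2}/8$ for every $h\in\R$ and every $p\in[0,1]$.

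Third, I verify this by the usual Taylor argument: $\psi(0)=0$, and differentiation gives
\[
\psi'(h)=-p+\frac{p\,e^{h}}{(1-p)+p\,e^{h}},\qquad \psi'(0)=0,
\]
while
\[
\psi''(h)=\frac{p(1-p)e^{h}}{\bigl[(1-p)+p\,e^{h}\bigr]^{2}}=q(h)\bigl(1-q(h)\bigr)\le \frac{1}{4},
\]
where $q(h)\coloneqq p\,e^{h}/[(1-p)+p\,e^{h}]\in[0,1]$. Second-order Taylor expansion around $0$ then gives $\psi(h)\le h^{2}/8$. Substituting $h=\gamma(W-U)$ and taking logarithms yields
\[
\log \E{\exp(\gamma V)\,|\,\cF}\le \frac{\gamma^{2}(W-U)^{2}}{8}=\frac{\gamma^{2}(U-W)^{2}}{8}.
\]

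The only subtlety I anticipate is the careful handling of the measure-theoretic conditioning: I must justify pulling the $\cF$-measurable factors $W-V,V-U$ (equivalently $U,W$) out of $\E{\cdot\,|\,\cF}$, and deal cleanly with the null event $\{U=W\}$ so that the bound $\psi(h)\le h^{2}/8$ is invoked only where $W-U>0$. Everything else is the classical Hoeffding calculation carried out $\omega$-wise.
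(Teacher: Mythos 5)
Your proof is correct. Note that the paper itself gives no proof of this lemma: it is quoted from Devroye and Lugosi (2012) and used as a black box in the proof of the McDiarmid-type inequality in the appendix, so there is no in-paper argument to compare against. What you write is the standard Hoeffding-lemma computation carried out conditionally --- convexity of $x\mapsto e^{\gamma x}$ on $[U,W]$, elimination of $V$ via $\E{V|\cF}=0$, the substitution $p=-U/(W-U)$, $h=\gamma(W-U)$, and the Taylor bound $\psi(h)\le h^2/8$ from $\psi(0)=\psi'(0)=0$, $\psi''\le 1/4$ --- and the two points that genuinely need care (that the centering forces $U\le 0\le W$ a.s., so $p\in[0,1]$, and the degenerate $\cF$-measurable event $\{U=W\}$ on which $V=0$ a.s.) are both handled.
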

\begin{proof}[Proof of Theorem~\ref{Thrm:McDiarmidIneq}]
We consider the natural filtration of the random vector $\hat{Z}$, i.e., $\cF_i = \sigma(\hat{Z}_1,\ldots,\hat{Z}_i)$ for $i=0,\ldots,n$ and define $\hat\phi ( (\hat{x_i})_i ) \coloneqq \phi( x)$ for $x\in \Lambda$. Then $\hat\phi$ is also Lipschitz continuous w.r.t.\ Hamming distance, more precisely,
$$
		|\hat\phi (x) - \hat\phi(y) | \le \sum_{j=1}^N c_j \1{x_j\neq y_j} \le \sum_{i=1}^n C_i(c) \1{\hat{x}_i \neq \hat{y}_i}. 
$$
Set $V_i \coloneqq \E{\hat\phi(\hat{Z})|\cF_i } - \E{\hat\phi(\hat{Z})|\cF_{i-1} }$ for $i=1,\ldots,n$. Moreover, define for $a\in \hat\Lambda_i = \prod_{j\in \cI_i} \Lambda_j$
\begin{align*}
		I_i(a) & \coloneqq \int_{\hat\Lambda_{i+1}\times\ldots \times\hat\Lambda_n} \p\left( \hat{Z}_{i+1} \in \intd{\hat{z}}_{i+1},\ldots, \hat{Z}_{n} \in \intd{\hat{z}}_{n} \,|\, \hat{Z}_{1},\ldots,\hat{Z}_{i-1}, \hat{Z}_{i}= a 	\right) \\
		&\qquad\qquad\qquad\quad \hat\phi\left(\hat{Z}_1,\ldots,\hat{Z}_{i-1},a,\hat{z}_{i+1},\ldots,\hat{z}_{n} \right) .
\end{align*}
And write $\nu_i$ for the conditional distribution of $\hat{Z}_i$ given $(\hat{Z}_1,\ldots,\hat{Z}_{i-1})$, i.e.,
$$
		\nu_i = \mathbbm{M}_{\hat{Z}_i|(\hat{Z}_1,\ldots,\hat{Z}_{i-1})} \left( (\hat{Z}_1,\ldots,\hat{Z}_{i-1}), \cdot \right).
$$
Then, it follows with elementary calculations that
$$
		V_i \le \esssup_{ \text{ w.r.t.\ } \nu_i } I_i(\cdot) - \essinf_{ \text{ w.r.t.\ } \nu_i} I_i(\cdot).
$$
Now let $\epsilon>0$ be arbitrary but fixed. Choose $a^*,b^*\in \hat\Lambda_i$ such that $I_i(a^*) \ge \esssup_{ \text{ w.r.t.\ } \nu_i } I_i(\cdot) - \frac{\epsilon}{2}$ and $I_i(b^*) \le \essinf_{\text{ w.r.t.\ } \nu_i  } I_i(\cdot) + \frac{\epsilon}{2}$. Next, we use the Marton coupling of $\hat{Z}$ to obtain 
\begin{align*}
		I_i(a^*) - I_i(b^*) &= \E{ \hat\phi\left(\hat{Z}^{(\hat{Z}_1,\ldots,\hat{Z}_{i-1},a^*,b^* )} \right) - \hat\phi\left(\hat{Z'}^{(\hat{Z}_1,\ldots,\hat{Z}_{i-1},a^*,b^* )} \right) \,\Big|\, \hat{Z}_1,\ldots,\hat{Z}_{i-1} } \\
		&\le \sum_{k=i}^n C_k(c) \, \p\left( \hat{Z}_k^{(\hat{Z}_1,\ldots,\hat{Z}_{i-1},a^*,b^* )} \neq \hat{Z'}_k^{(\hat{Z}_1,\ldots,\hat{Z}_{i-1},a^*,b^* )} \,\Big|\, \hat{Z}_1,\ldots,\hat{Z}_{i-1}	\right) \\
		&\le \sum_{k=i}^n C_k(c) \Gamma_{i,k}.
\end{align*}	
And as $\epsilon >0$ was arbitrary, 
$$
		\esssup_{ \text{ w.r.t.\ } \nu_i } I_i(\cdot) - \essinf_{ \text{ w.r.t.\ } \nu_i } I_i(\cdot) \le \sum_{k=i}^n C_k(c) \Gamma_{i,k}.
$$
Moreover, we have
$$
		\essinf_{ \text{ w.r.t.\ } \nu_i } I_i(\cdot) - \E{\hat\phi(\hat{Z})|\cF_{i-1} } \le V_i \le \esssup_{ \text{ w.r.t.\ } \nu_i  } I_i(\cdot)  - \E{\hat\phi(\hat{Z})|\cF_{i-1} }, \quad a.s.
$$
and both the left- and the right-hand-side are $\cF_{i-1}$-measurable. Consequently, using the lemma of \cite{devroye2012combinatorial}, we find that
$$
		\E{\exp\left(	\gamma V_i	\right)\Big| \cF_{i-1} } \le \exp\left(	\frac{\gamma^2}{8} \left(\sum_{k=i}^n C_k(c) \Gamma_{i,k} \right)^2	\right).
$$
This establishes the claim in \eqref{Eq:McDiarmidIneqLaplace}. The final result in \eqref{Eq:McDiarmidIneq} follows from the inequalities
$
			\norm{\Gamma C(c) }^2 \le \norm{\Gamma}^2 \norm{C(c)}^2 \le \norm{\Gamma}^2 \norm{c}^2 s(\hat{Z}).
$

\end{proof}

The next proposition is due to \cite{fiebig1993mixing} and a consequence of Goldstein's maximal coupling, \cite{goldstein1979maximal}. See also \cite{paulin2015concentration} Proposition 2.6 and \cite{samson2000concentration} Proposition 2.
\begin{proposition}[\cite{fiebig1993mixing}, p. 482, (2.1)]\label{Prop:Goldstein}
Let $P$ and $Q$ be two probability distributions on some common Polish space $\Lambda_1\times\ldots\times\Lambda_N$ both admitting a strictly positive density w.r.t.\ to a measure $\rho$. Then there is a coupling of random vectors $X=(X_1,\ldots,X_N)$, $Y=(Y_1,\ldots,Y_N)$ such that $\cL(X)=P$, $\cL(Y)=Q$ and 
$$
		\p(X_i\neq Y_i) \le d_{\text{TV}}( \cL(X_i,\ldots,X_N), \cL(Y_i,\ldots,Y_N)), \quad i=1,\ldots,N.
$$
\end{proposition}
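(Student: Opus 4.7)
The statement is the finite-dimensional version of Goldstein's classical maximal coupling \cite{goldstein1979maximal}. After reindexing the coordinates in reverse ($\tilde X_k := X_{N-k+1}$, $\tilde Y_k := Y_{N-k+1}$), the inequality takes the classical form
\[
\p(\tilde X_j \neq \tilde Y_j) \le d_{TV}\bigl(\cL(\tilde X_1, \ldots, \tilde X_j), \cL(\tilde Y_1, \ldots, \tilde Y_j)\bigr), \quad j = 1, \ldots, N,
\]
which is the setting of Goldstein. Introducing the first-disagreement time $\tau := \min\{k : \tilde X_k \neq \tilde Y_k\}$, the trivial inclusion $\{\tilde X_j \neq \tilde Y_j\} \subseteq \{\tau \le j\}$ reduces the proof to constructing a coupling of $P$ and $Q$ for which $\p(\tau \le j) = d_{TV}(\cL(\tilde X_1,\ldots,\tilde X_j), \cL(\tilde Y_1,\ldots,\tilde Y_j))$ holds for every $j \in \{1, \ldots, N\}$.

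To construct the coupling, let $\mu_j, \nu_j$ denote the marginals of $\cL(\tilde X_1,\ldots,\tilde X_j)$ and $\cL(\tilde Y_1,\ldots,\tilde Y_j)$ with densities $p_j, q_j$ with respect to a common dominating product measure, and set $m_j := \min(p_j, q_j)$, a sub-probability density of mass $\|m_j\| = 1 - d_{TV}(\mu_j, \nu_j)$. The crucial structural fact is the monotonicity
\[
\int m_{j+1}(x_1,\ldots,x_j,x_{j+1}) \, dx_{j+1} \le m_j(x_1,\ldots,x_j),
\]
which is an instance of the elementary inequality $\int \min(f,g) \le \min(\int f, \int g)$. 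The coupling is built forward, one coordinate at a time. At step $1$ one applies a maximal coupling of $\mu_1$ and $\nu_1$. At step $j+1$, conditional on the history, if the two paths have so far agreed on a common value $x_1,\ldots,x_j$, the coupled value $\tilde X_{j+1} = \tilde Y_{j+1}$ is drawn with sub-probability density $m_{j+1}(x_1,\ldots,x_j,\cdot)/m_j(x_1,\ldots,x_j)$, and the residual probability is assigned to a disagreement coupling calibrated so that the conditional marginal laws of $\tilde X_{j+1}$ and $\tilde Y_{j+1}$ equal the ones prescribed by $p_{j+1}/p_j$ and $q_{j+1}/q_j$. If the two paths have already split, one continues with the respective conditional laws under $P$ and $Q$. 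Strict positivity of $p, q$ ensures that every conditional distribution invoked in this recursion is well defined.

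The main obstacle is the bookkeeping required to verify simultaneously that (i) the constructed measure has marginals $P$ and $Q$, and (ii) the coupling-time identity $\p(\tau > j) = \|m_j\|$ holds for every $j$. Both are established by induction on $j$: the coupling-time identity is the telescoping consequence $\p(\tau = j+1) = \|m_j\| - \|m_{j+1}\|$ of the monotonicity above, while the marginal check combines, via the conditional densities, the contributions of the still-coupled branch and the disagreement branch at each step. Once these two properties are in place, the proposition follows at once from the inclusion $\{\tilde X_j \neq \tilde Y_j\} \subseteq \{\tau \le j\}$ together with $\p(\tau \le j) = d_{TV}(\mu_j, \nu_j)$, after undoing the reindexing.
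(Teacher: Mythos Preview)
The paper does not give its own proof of this proposition; it is stated as a known result, attributed to \cite{fiebig1993mixing} and identified as a consequence of Goldstein's maximal coupling \cite{goldstein1979maximal}. Your proposal supplies what the paper omits: a correct outline of the standard Goldstein construction (reversal of coordinates, first-disagreement time $\tau$, the monotonicity $\int m_{j+1}\,dx_{j+1}\le m_j$, and the forward inductive build with the residual disagreement branch). There is therefore nothing to compare at the level of strategy; your sketch is the orthodox argument.

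One remark on the bookkeeping you flag as the main obstacle. In your description of step $j+1$ on the agreement branch you say the residual disagreement coupling is ``calibrated so that the conditional marginal laws of $\tilde X_{j+1}$ and $\tilde Y_{j+1}$ equal the ones prescribed by $p_{j+1}/p_j$ and $q_{j+1}/q_j$''. That phrasing is slightly misleading: what must be matched is not the conditional law of $\tilde X_{j+1}$ on the agreement branch alone, but the \emph{unconditional} law of $\tilde X_{j+1}$ given $\tilde X_1^j=x$, which mixes the agreement branch (entered with density $m_j(x)$) and the already-split branch (entered with density $p_j(x)-m_j(x)$, where one simply continues with $p_{j+1}/p_j$). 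Concretely, on the agreement branch at $x$ the residual law for $\tilde X_{j+1}$ must have density proportional to $p_{j+1}(x,\cdot)-m_{j+1}(x,\cdot)\cdot p_j(x)/m_j(x)$ so that the mixture collapses to $p_{j+1}(x,\cdot)/p_j(x)$; one checks this is nonnegative precisely because $m_{j+1}\le p_{j+1}$ and $m_j\le p_j$ combine with the monotonicity you stated. This is a cosmetic point, not a gap: the inductive identities $\p(\tau>j,\tilde X_1^j\in dx)=m_j(x)\,dx$ and $\p(\tilde X_1^j\in dx)=p_j(x)\,dx$ that you invoke do go through once the residual densities are written out explicitly.
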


\section*{Acknowledgments}
The author thanks an anonymous referee whose careful reading and detailed reports improved the manuscript considerably.
This research was supported by the German Research Foundation (DFG), Grant Number KR-4977/1-1.

\end{document}